\let\origsection=\section \def\section{\@ifstar{\origsection*}{\mysection}} 
\def\mysection{\@startsection{section}{1}\z@{.7\linespacing\@plus\linespacing}{.5\linespacing}{\normalfont\scshape\centering\S}}
\renewcommand{\PrintDOI}[1]{\doi{#1}}
\numberwithin{equation}{section}
\numberwithin{figure}{section}
\let\polishlcross=\l
\def\l{\ifmmode\ell\else\polishlcross\fi}
\def\paragraph#1{%
  \noindent\textbf{#1.}\enspace}
\let\emptyset=\varnothing
\let\setminus=\smallsetminus
\def\moverlay{\mathpalette\mov@rlay}
\def\mov@rlay#1#2{\leavevmode\vtop{   \baselineskip\z@skip \lineskiplimit-\maxdimen
   \ialign{\hfil$\m@th#1##$\hfil\cr#2\crcr}}}
\newcommand{\charfusion}[3][\mathord]{
    #1{\ifx#1\mathop\vphantom{#2}\fi
        \mathpalette\mov@rlay{#2\cr#3}
      }
    \ifx#1\mathop\expandafter\displaylimits\fi}
\DeclareFontFamily{U}  {MnSymbolC}{}
\DeclareSymbolFont{MnSyC}         {U}  {MnSymbolC}{m}{n}
\DeclareFontShape{U}{MnSymbolC}{m}{n}{
    <-6>  MnSymbolC5
   <6-7>  MnSymbolC6
   <7-8>  MnSymbolC7
   <8-9>  MnSymbolC8
   <9-10> MnSymbolC9
  <10-12> MnSymbolC10
  <12->   MnSymbolC12}{}
\DeclareMathSymbol{\powerset}{\mathord}{MnSyC}{180}
\let\epsilon=\varepsilon
\let\rho=\varrho
\let\theta=\vartheta
\let\phi=\varphi
\newcommand{\nbd}{\nobreakdash-}
\theoremstyle{plain}
\newtheorem{thm}{Theorem}[section]
\newtheorem{theorem}[thm]{Theorem}
\newtheorem{lemma}[thm]{Lemma}
\newtheorem{corollary}[thm]{Corollary}
\newtheorem{proposition}[thm]{Proposition}
\newtheorem{conjecture}[thm]{Conjecture}
\newtheorem{question}[thm]{Question}
\newtheorem*{claim*}{Claim}
\newtheorem{claim}{Claim}[]
\newtheorem{thm-intro}{Theorem}[]
\newtheorem{conj-intro}[thm-intro]{Conjecture}
\newtheorem{question-intro}[thm-intro]{Question}
\newtheorem*{meta-question*}{Meta question}
\theoremstyle{definition}
\newtheorem{remark}[thm]{Remark}
\newtheorem{construction}[thm]{Construction}
\newtheorem{example}{Example}
\newtheorem*{example*}{Example}
\newcommand{\abs}[1]{\ensuremath{{\lvert {#1} \rvert}}}
\newcommand{\restricted}{\ensuremath{{\upharpoonright}}}
\newcommand{\setsep}{\ensuremath{{\,\colon\,}}}
\newcommand{\triangularprism}{\ensuremath{K_3 \,\Box\, K_2}}
\DeclareMathOperator{\insh}{in}
\DeclareMathOperator{\outsh}{out}
\DeclareMathOperator{\tr}{tr}
\begin{document}

\author[J.~P.~Gollin]{J.~Pascal Gollin}
\address{J. Pascal Gollin, Discrete Mathematics Group, Institute for Basic Science (IBS), 55 Expo-ro, Yuseong-gu, Daejeon, Korea, 34126}
\email{\tt pascalgollin@ibs.re.kr}
\thanks{The first author was supported by the Institute for Basic Science (IBS-R029-Y3).}

\author[K.~Heuer]{Karl Heuer}
\address{Karl Heuer, Institute of Software Engineering and Theoretical Computer Science, Technische Universit\"{a}t Berlin, Ernst-Reuter-Platz 7, 10587 Berlin, Germany}
\email{\tt karl.heuer@tu-berlin.de}
\thanks{The second author was supported by a postdoc fellowship of the German Academic Exchange Service (DAAD) and partly by the European Research Council (ERC) under the European Union's Horizon 2020 research and innovation programme (ERC consolidator grant DISTRUCT, agreement No.\ 648527)}

\author[K.~Stavropoulos]{Konstantinos Stavropoulos}
\address{Konstantinos Stavropoulos, Fachbereich Mathematik, Universit\"{a}t Hamburg, Bundes{-}stra{\ss}e 55, 20146 Hamburg, Germany}
\email{\tt konstantinos.stavropoulos@uni-hamburg.de}

\title[Disjoint dijoins for classes of dicuts in finite and infinite digraphs]{Disjoint dijoins for classes of dicuts \\ in finite and infinite digraphs}

\date{September 8, 2021}

\keywords{Woodall's conjecture, digraphs, directed cuts, dijoins, dijoin packing}

\subjclass[2020]{05C20, 05C70 (primary); 05C65, 05C63 (secondary)}

\begin{abstract}
    A \emph{dicut} in a directed graph is a cut for which all of its edges are directed to a common side of the cut. 
    A famous theorem of Lucchesi and Younger states 
    that in every finite digraph 
    the least size of a set of edges meeting every non-empty dicut 
    equals the maximum number of disjoint dicuts in that digraph. 
    Such sets are called \emph{dijoins}. 
    Woodall conjectured a dual statement. 
    He asked whether the maximum number of disjoint dijoins in a directed graph 
    equals the minimum size of a non-empty dicut. 
    
    We study a modification of this question where we restrict our attention to certain classes of non-empty dicuts, 
    i.e.~whether for a class~$\mathfrak{B}$ of dicuts of a directed graph the maximum number of disjoint sets of edges meeting every dicut in~$\mathfrak{B}$ equals the size of a minimum dicut in~$\mathfrak{B}$. 
    In particular, we verify this questions 
    for nested classes of finite dicuts, 
    for the class of dicuts of minimum size, 
    and for classes of infinite dibonds, 
    and we investigate how this generalised setting relates to a capacitated version of this question. 
\end{abstract}

\maketitle

\section{Introduction}
\label{sec:intro}

In this paper we consider directed graphs, which we briefly denote as \emph{digraphs}. 
A \emph{dicut} in a digraph is a cut for which all of its edges are directed to a common side of the cut. 
A famous theorem of Lucchesi and Younger~\cite{lucc-young_paper} states that in every finite digraph the least size of a set of edges meeting every non-empty dicut equals the maximum number of disjoint dicuts in that digraph. 
Such sets of edges are called \emph{dijoins}. 

Woodall conjectured the following in some sense dual statement, where the roles of the minimum and maximum are reversed. 

\begin{conj-intro}
    [Woodall 1976 \cite{woodall}]
    \label{conj:woodall}
    The size of a smallest 
    non-empty 
    dicut in a 
    finite 
    digraph~$D$ is equal to the size of the largest set of disjoint dijoins of~$D$.
\end{conj-intro}

This conjecture is a long-standing open question in this area and is included in a list of important conjectures compiled by Cornu\'{e}jols~\cite{Cornuejols:book}. 

Not much is known in general about this conjecture. 
It is easy to find two disjoint dijoins in a bridgeless weakly connected digraph~$D$: 
just consider an orientation of the underlying undirected multigraph that yields a strongly connected digraph, which exists by a theorem of Robbins~\cite{strongly-connected-orientation}; 
then the two desired dijoins of~$D$ are the set of edges which agrees with this orientation and the set of edges which disagrees with this orientation. 
This observation was noted by Seymour and DeVos~\cite{open-problem-garden}. 

But beyond that, there is no known bound on the size of a smallest dicut that can guarantee the existence of even three disjoint dijoins~\cites{egres-forum,open-problem-garden}.

There are several partial results restricting the attention to digraphs with certain properties. 
Lee and Wakabayashi~\cite{LW:series-parallel} verified Conjecture~\ref{conj:woodall} for digraphs whose underlying multigraph is series-parallel, which was later improved by Lee and Williams~\cite{LW:no_K^5-e} proving the conjecture for digraphs whose underlying multigraph is planar and does not contain a triangular prism~$\triangularprism$ as a minor\footnotemark. 
Schrijver~\cite{Schrijver:source-sink} and independently Feofiloff and Younger~\cite{FY:source-sink} verified the conjecture for \emph{source-sink connected} digraphs, 
i.e.~digraphs where from every source there exists a directed path to every sink. 
For additional partial results see~\cite{meszaros:partition-connected}. 

\footnotetext{Note that they proved the dual statement about feedback arc sets in planar digraphs without a minor isomorphic to~${K_5 - e}$, the graph obtained from~$K_5$ by deleting one of its edges, which is the planar dual of the triangular prism~$\triangularprism$.}

Thomassen~\cite{thomassen:perso} showed with some tournament on~$15$ vertices that a dual version regarding directed cycles and disjoint \emph{feedback arc sets}, i.e.\ sets of edges meeting every directed cycle, fails for non-planar digraphs. 
For planar digraphs, these questions are obviously equivalent and still open. 

A capacitated version of Woodall's Conjecture (cf.~Section~\ref{sec:capacity}), conjectured by Edmonds and Giles~\cite{EG:capacity} was proven to be false by Schrijver~\cite{Schrijver:counterexample}. 
Although false in general, the conjecture of Edmonds and Giles has been verified in some special cases.
In particular, the works by Lee and Wakabayashi~\cite{LW:series-parallel}, Lee and Williams~\cite{LW:no_K^5-e}, and Feofiloff and Younger~\cite{FY:source-sink} 
are actually about the conjecture of Edmonds and Giles and obtain corresponding results about Woodall's Conjecture as corollaries.
For more research regarding this line of work, including a study of the structure of possible counterexamples, see~\cites{CG:more-counterexamples, WG:even-more-counterexamples, CEKSS:dijoins_recent, shepherd_vetta}. 

\medskip

Instead of focusing on specific classes of digraphs, one other possible avenue to explore Conjecture~\ref{conj:woodall} is to restrict the attention not to all dicuts, but to some specific classes of dicuts of a digraph. 
In~\cites{inf-LY:1, inf-LY:2}, the first two authors considered a similar approach regarding classes of dicuts in their attempt of generalising the theorem of Lucchesi and Younger to infinite digraphs. 

For a digraph~$D$ and a non-empty class of non-empty dicuts~$\mathfrak{B}$ of~$D$, a set of edges meeting every dicut in~$\mathfrak{B}$ is a \emph{$\mathfrak{B}${\nbd}dijoin}. 
Using this terminology, a natural modification of Conjecture~\ref{conj:woodall} is the following question. 

\begin{question-intro}
    \label{quest:main}
    For which digraphs~$D$ 
    and classes of dicuts~$\mathfrak{B}$ of~$D$ 
    is the size of a smallest dicut in~$\mathfrak{B}$ equal to the size of a largest set of disjoint $\mathfrak{B}${\nbd}dijoins of~$D$?
\end{question-intro}

\medskip

A \emph{dibond} of~$D$ is a minimal non-empty dicut. 
We say a class~$\mathfrak{B}$ of dicuts of a digraph~$D$ is \emph{dibond-closed} if every dibond which is contained in some dicut in~$\mathfrak{B}$ is contained in~$\mathfrak{B}$ as well. 
Note that whenever~$\mathfrak{B}$ is a dibond-closed class of dicuts, then Question~\ref{quest:main} for this class is equivalent to the question for the class of all dibonds contained in~$\mathfrak{B}$. 
Hence in the setting of Question~\ref{quest:main}, Conjecture~\ref{conj:woodall} translates into the question for the class~$\mathfrak{B}_\textnormal{dibond}$ of all dibonds of~$D$. 

For classes of dicuts which are not dibond-closed in digraphs that are not necessarily weakly connected, this question relates to the capacitated version of Question~\ref{quest:main} for classes of dibonds. 
We will investigate this connection in Section~\ref{sec:capacity}. 

\medskip

In this paper, we will give positive answers for Question~\ref{quest:main} for several classes of dicuts. 
Two dicuts are \emph{nested} if some side of one of them is a subset of some side of the other. 
A set of dicuts is \emph{nested} if the dicuts in that set are pairwise nested. 
We prove a result 
%affirm Question~\ref{quest:main} 
for nested classes of finite dicuts, 
using the machinery developed by Berge~\cite{berge} for transversal packings in balanced hypergraphs. 

\begin{thm-intro}
    \label{thm:nested-woodall}
    Let~$D$ be a digraph 
    and~$\mathfrak{B}$ be a nested class of finite dicuts of~$D$. 
    Then the size of a smallest 
    dicut in~$\mathfrak{B}$ is equal to the size of a largest set of disjoint $\mathfrak{B}${\nbd}dijoins of~$D$.
\end{thm-intro}

Another interesting class is the class~$\mathfrak{B}_\textnormal{min}$ of dicuts of minimum size. 
An inductive construction allows us to reduce the following theorem to Theorem~\ref{thm:nested-woodall}. 

\begin{thm-intro}
    \label{thm:mini-woodall}
    The size of a smallest 
    dicut in a digraph~$D$ 
    that contains finite dicuts 
    is equal to the size of largest set of disjoint $\mathfrak{B}_\textnormal{min}${\nbd}dijoins of~$D$, 
    where~$\mathfrak{B}_\textnormal{min}$ denotes the class of dicuts of~$D$ of minimum size. 
\end{thm-intro}

The parts of Theorems~\ref{thm:nested-woodall} and~\ref{thm:mini-woodall} regarding infinite digraphs are proved using the compactness principle in combinatorics. 
We will also use that technique to prove a finitary version of the results of Lee and Williams~\cite{LW:no_K^5-e} and Feofiloff and Younger~\cite{FY:source-sink} for infinite digraphs only considering dicuts of finite size (capacity). 

Finally, we verify a cardinality version for classes of infinite dibonds of Question~\ref{quest:main}. 
This proof uses a transfinite recursion to construct the dijoins in the case where the dibonds have the same cardinality as the order of the digraph. 
To complete the proof, we use the concept of bond-faithful decompositions due to Laviolette~\cite{laviolette}. 

\begin{thm-intro}
    \label{thm:infinite-cardinality-woodall}
    Let~$D$ be a digraph and~$\mathfrak{B}$ be a class of infinite dibonds of~$D$.
    The size of a smallest dibond in~$\mathfrak{B}$ 
    is equal to the size of largest set of disjoint $\mathfrak{B}${\nbd}dijoins of~$D$.
\end{thm-intro}

\medskip

This paper is structured as follows. 
After introducing some terminology in Section~\ref{sec:prelims}, 
we discuss the capacitated version of Question~\ref{quest:main} and its connection to classes of dicuts which are not dibond-closed in Section~\ref{sec:capacity}. 
In Section~\ref{sec:dicut-hypergraph}, we look at transversal packings of balanced hypergraphs and prove  Theorem~\ref{thm:nested-woodall}.
Section~\ref{sec:mini-woodall} is dedicated to prove Theorem~\ref{thm:mini-woodall} for finite digraphs. 
We turn our attention to infinite digraphs in Section~\ref{sec:infinite}. 
After showing that a more structural generalisation of Conjecture~\ref{conj:woodall} to infinite digraphs fails (see Example~\ref{ex:no-erdos-menger-woodall}), in Subsection~\ref{subsec:compactness} we use the compactness principle in combinatorics to finish the proof of Theorem~\ref{thm:mini-woodall} for infinite digraphs and prove a finitary version of the results of Lee and Williams~\cite{LW:no_K^5-e} and Feofiloff and Younger~\cite{FY:source-sink}. 
Finally, in Subsection~\ref{subsec:infinite-dibonds} we prove Theorem~\ref{thm:infinite-cardinality-woodall} and give an example that a capacitated version of that theorem fails.

\section{Preliminaries}
\label{sec:prelims}

For general facts and notation for graphs we refer the reader to~\cite{diestel}, for digraphs in particular to~\cite{bang-jensen}, and for hypergraphs to~\cite{berge}. 
For some set theoretic background, including ordinals, cardinals and transfinite induction, we refer the reader to~\cite{jech}. 

\subsection{Digraphs}
\ 

Let~$D$ be a digraph with vertex set~${V(D)}$ and edge set~${E(D)}$. 
We allow~$D$ to have parallel edges, but may assume for most purposes in this paper that~$D$ does not contain any loops. 
We view the edges of~$D$ as ordered pairs~${(u,v)}$ of vertices~${u, v \in V(D)}$ and shall write~${uv}$ instead of~${(u, v)}$, although this might not uniquely determine an edge if~$D$ contains parallel edges. 
We say~$D$ is \emph{simple} if it does not contain parallel edges (or loops). 
For an edge~${uv \in E(D)}$ we furthermore call the vertex~$u$ as the \emph{tail} of~${uv}$ and~$v$ as the \emph{head} of~$uv$. 

A digraph~$D$ is \emph{weakly connected} if its underlying undirected (multi-)graph is connected. 
The components of the underlying undirected (multi-)graph are the \emph{weak components of~$D$}. 

For two sets~${X, Y \subseteq V(D)}$ of vertices we define~${E_D(X, Y) \subseteq E(D)}$ as the set of those edges 
that have their head in~${X \setminus Y}$ and their tail in~${Y \setminus X}$, 
or their head in~${Y \setminus X}$ and their tail in~${X \setminus Y}$. 

We consider a \emph{bipartition of~$V(D)$} to be an ordered pair~${(X,Y)}$ for which~${X \cap Y = \emptyset}$ and~${X \cup Y = V(D)}$. 
We call~${(X,Y)}$ \emph{trivial} if either~$X$ or~$Y$ is empty. 

A set~$B$ of edges of~$D$ is a \emph{cut} of~$D$ if there is a non-trivial bipartition~${(X,Y)}$ of~$V(D)$ such that~${B = E_D(X,Y)}$. 
We call~${(X,Y)}$ a \emph{representation} of~$B$ (or say~${(X,Y)}$ \emph{represents}~$B$), 
and we refer to~$X$ and~$Y$ as the \emph{sides} of the representation 
(or the \emph{sides} of the cut, if the representation is inferred from context). 
Note that a cut of a weakly connected digraph has 
up to the ordering of the pair 
a unique representation. 
For a set~$\mathcal{B}$ of cuts, a set~$\mathcal{R}$ of bipartitions of~${V(D)}$ \emph{represents}~$\mathcal{B}$ if for each~${B \in \mathcal{B}}$ there is a bipartition~${(X,Y) \in \mathcal{R}}$ that represents~$B$. 

We define a partial order on the set of bipartitions of~${V(D)}$ by 
\[
    (X,Y) \leq (X',Y') \quad \textnormal{ if and only if } \quad X \subseteq X' \textnormal{ and } Y \supseteq Y'. 
\]
Two bipartitions~${(X,Y)}$ and~${(X',Y')}$ if~${V(D)}$ are \emph{nested} 
if one of~${(X,Y)}$,~${(Y,X)}$ is $\leq${\nbd}comparable with one of~${(X',Y')}$,~${(Y',X')}$. 
Note that~${(X,Y) \leq (X',Y')}$, if and only if~${(Y',X') \leq (Y,X)}$. 
A set~$\mathcal{R}$ of bipartitions of~${V(D)}$ is nested if its elements are pairwise nested. 

Two cuts~$B_1$ and~$B_2$ are \emph{nested} if they can be represented by nested bipartitions of~${V(D)}$. 
Moreover, we call a set (or sequence)~$\mathcal{B}$ of cuts of~$D$ nested if there is a nested set of bipartitions that represents~$\mathcal{B}$. 
Note that in a digraph which is not weakly connected, a set of pairwise nested cuts is not necessarily nested itself. 
If two cuts of~$D$ are not nested, we call them \emph{crossing}. 

A \emph{bond} is a minimal non-empty cut (with respect to the subset relation). 
Note that if~$D$ is weakly connected, then a cut~${B}$ of~$D$ represented by~${(X,Y)}$ is a bond, if and only if the induced subdigraphs~${D[X]}$ and~${D[Y]}$ are weakly connected digraphs. 

We call a cut~$B$ \emph{directed}, or briefly a \emph{dicut}, if all its have their head in one common side of the cut. 
A bond that is also a dicut is called a \emph{dibond}. 

If a bipartition~${(X,Y)}$ of~${V(D)}$ represents a non-empty dicut, we call the side of the representation that contains the heads of the edges of the dicut the \emph{in-shore~${\insh_D(X,Y)}$} of the representation, and we call the side of the representation that contains the tails of the edges of the dicut the \emph{out-shore~${\outsh_D(X,Y)}$} of the representation. 
If the representation of a dicut~$B$ is clear from the context (for example if~$D$ is weakly connected) we will speak of the \emph{in-shore~${\insh_D(B)}$} of~$B$, or \emph{out-shore~${\outsh_D(B)}$ of~$B$}. 

A cut or dicut \emph{separates} two vertices (or two sets of vertices) if there is a representation for which they are contained in different sides. 
Similarly,~$B$ separates two sets of vertices, if they are contained in different sides of some representation of~$B$. 

\medskip

Let~$\mathfrak{B}$ be a class of dibonds of~$D$. 
A \emph{$\mathfrak{B}${\nbd}dijoin} is a set of edges meeting every dibond in~$\mathfrak{B}$. 
We say~$D$ is \emph{$\mathfrak{B}${\nbd}woodall} if the size of a smallest dibond in~$\mathfrak{B}$ is equal to the size of a largest set of disjoint $\mathfrak{B}${\nbd}dijoins of~$D$.

\subsection{Hypergraphs}
\ 

A \emph{hypergraph}~$\mathcal{H}$ is a tuple~$(X,H)$ consisting of a set~$X$ and a set~$H$ of subsets of~$X$. 
The set~$X$ is called the \emph{ground set} of~$\mathcal{H}$ and the elements of~$H$ are called \emph{hyperedges}. 

A set~${T \subseteq X}$ is a \emph{transversal of~$\mathcal{H}$} if~${T \cap h}$ is non-empty for every hyperedge~${h \in H}$. 

For a set~${Y \subseteq X}$ we define the \emph{subhypergraph of~$\mathcal{H}$ induced on~$Y$} as
\[
    {\mathcal{H}[Y] := \big( Y, \{ h \cap Y \setsep h \cap Y \neq \emptyset, h \in H \} \big)}. 
\]
For a subset~${H' \subseteq H}$, the hypergraph~${\mathcal{H}(H') = (X, H')}$ is the \emph{partial hypergraph of~$\mathcal{H}$ generated by~$H'$}. 
A \emph{partial subhypergraph of~$\mathcal{H}$} is a partial hypergraph of an induced subhypergraph. 
Lastly, for a set~${Y \subseteq X}$ we call for~${H \restricted Y := \{ h \in H \setsep H \subseteq Y \}}$ the partial subhypergraph~${\mathcal{H} \restricted Y := (Y, H \restricted Y)}$ 
the \emph{restriction of~$\mathcal{H}$ to~$Y$}. 

For a positive integer~$k$, a map~${c \colon\, X \to [k]}$ is a \emph{$k${\nbd}colouring of~$\mathcal{H}$} if no hyperedge of size at least~$2$ is monochromatic, i.e.~$\abs{c(h)} > 1$ for each~${h \in H}$ with~${\abs{h} > 1}$. 
If there is a $k${\nbd}colouring of~$\mathcal{H}$, then we say~$\mathcal{H}$ is $k${\nbd}colourable.

\section{On the capacitated version}
\label{sec:capacity}

A map~${c \colon E(D) \to K}$, where~$K$ denotes a set of cardinals, is called a \emph{capacity} of~$D$. 
If~${K = \mathbb{N}}$ (the set of non-negative integers), then we say~$c$ is \emph{finitary}. 
We say a capacity~${c \colon E(D) \to \{0,1\}}$ is \emph{simple}. 

Given a capacity~$c$ and a subset~${A \subseteq E(D)}$, then by~$c(A)$ we denote the cardinality of the disjoint union of~${\{c(a) \setsep a \in A\}}$ (which is considered to be the cardinal sum of the cardinals). 
If this cardinal is finite, then we say~$A$ has \emph{finite capacity}. 
For a finitary capacity~$c$ and a finite~${A \subseteq E(D)}$ this coincides to the integer~${\sum_{e \in A} c(e)}$, and for a simple capacity~$c$ this coincides with the cardinal~${\abs{A \cap c^{-1}(1)}}$.

Given a capacity~$c$ of~$D$, 
a dibond~${B \in \mathfrak{B}}$ is \emph{$c${\nbd}cheapest in~$\mathfrak{B}$} if~${c(B)}$ is minimum among all dibonds in~$\mathfrak{B}$, 
and a set of dijoins is \emph{$c${\nbd}disjoint} if each edge~${e \in E(D)}$ is contained in at most~${c(e)}$ dijoins of that set. 
We say~$D$ is \emph{$\mathfrak{B}${\nbd}woodall with respect to~$c$} if the size of a $c${\nbd}cheapest dibond in~$\mathfrak{B}$ is equal to the size of a largest set of $c${\nbd}disjoint $\mathfrak{B}${\nbd}dijoins. 
This leads to the following capacitated version of Question~\ref{quest:main}.

\begin{question}
    \label{quest:capacitated}
    Which digraphs~$D$ with capacity~$c$ 
    and classes of dicuts~$\mathfrak{B}$ 
    are $\mathfrak{B}${\nbd}woodall with respect to~$c$? 
\end{question}

In this setting, the conjecture of Edmonds and Giles~\cite{EG:capacity} (which was proven to be false by Schrijver~\cite{Schrijver:counterexample}, cf.~Figure~\ref{fig:schrijver}) translates to the statement that every finite digraph~$D$ with capacity~$c$ of~$D$ is~$\mathfrak{B}_{\textnormal{dibond}}${\nbd}woodall with respect to~$c$, where~$\mathfrak{B}_{\textnormal{dibond}}$ denotes the class of all finite dibonds of~$D$. 

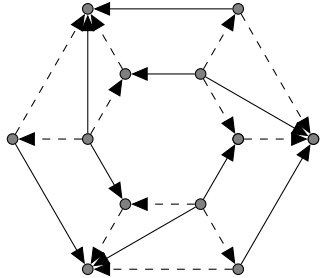
\begin{figure}[htbp]
    \centering
    \begin{tikzpicture}
        [scale=1]
        \tikzset{vertex/.style = {circle, draw, fill=black!50, inner sep=0pt, minimum width=4pt}}
        \tikzset{edge0/.style = {-triangle 45, dashed}}
        \tikzset{edge1/.style = {-triangle 45, black}}
        
        \foreach \x in {0,...,6} {
            \node [vertex] (v\x) at (60*\x:2) {};
            \node [vertex] (w\x) at (60*\x:1) {};
        }
        \foreach \x [evaluate={\y = int(\x+1);},evaluate={\z = int(\x+2);}] in {0,2,4}{
            \draw [edge0] (w\x) -- (v\x) {};
            \draw [edge0] (w\y) -- (v\y) {};
            \draw [edge1] (w\y) -- (v\x) {};
            \foreach \v in {v,w} {
                \draw [edge0] (\v\y) -- (\v\x) {};
                \draw [edge1] (\v\y) -- (\v\z) {};
            }
        }
    \end{tikzpicture}
    \caption{Schrijver's counterexample to the conjecture of Edmonds and Giles. The dashed edges have capacity~$0$, and the solid edges have capacity~$1$.}
    \label{fig:schrijver}
\end{figure}

Given a capacity~$c$ of a digraph~$D$ one can replace each edge of~$e$ of positive capacity by a set of~$\abs{c(e)}$ many distinct parallel edges. 
Defining the capacity of each of these newly created edges as~$1$ we obtain a digraph with a simple capacity, and it is not hard to see that the question whether for a class~$\mathfrak{B}$ of dicuts is $\mathfrak{B}${\nbd}woodall with respect to~$c$ is equivalent to the corresponding question for the `corresponding' class of dicuts of~$D'$. 
In fact, the key feature of any known counterexample of the conjecture of Edmonds and Giles is to assign capacity~$0$ to some edges. 
Going one step further by deleting the edges of capacity~$0$ allows us equivalently talk about Question~\ref{quest:capacitated} in the setting of Question~\ref{quest:main}, as the following construction and proposition shows. 

\begin{construction}
    \label{construction:capacity2dicuts}
    Given a digraph~$D$ with a capacity~$c$ of~$D$ we define a digraph~$\hat{D}$. 
    
    Let~$\hat{D}$ be the digraph on~$V(D)$ obtained by replacing each edge~$e$ of~$D$ with~$c(D)$ many distinct edges~${\{ e_\alpha \setsep 0 \leq \alpha < c(D) \}}$, each of which has the same head and tail as~$e$. 
    Note that any edge of capacity~$0$ is deleted in this process. 
    
    For each dicut~$B$ of~$D$ represented by~$(X,Y)$, we define a corresponding dicut~$\hat{B}$ of~$\hat{D}$ as~${\hat{B} := E_{\hat{D}}(X,Y)}$. 
    It is not hard to see that this is indeed well-defined 
    and that the capacity of~$B$ equals the size of~$\hat{B}$. 
    
    Moreover, for a class~$\mathfrak{B}$ of dicuts of~$D$ we define~$\hat{\mathfrak{B}}$ as~${\{ \hat{B} \setsep B \in \mathfrak{B}\}}$. 
\end{construction}

\begin{proposition}
    \label{prop:capacity}
    Let~$D$ be a digraph, let~$c$ be a capacity of~$D$, and let~$\mathfrak{B}$ be a class of dicuts of~$D$. 
    Then~$D$ is $\mathfrak{B}${\nbd}woodall with respect to~$c$ if and only if~$\hat{D}$ is $\hat{\mathfrak{B}}${\nbd}woodall. 
\end{proposition}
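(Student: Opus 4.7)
The plan is to translate between the two settings edge-by-edge and dicut-by-dicut via the explicit correspondence supplied by Construction~\ref{construction:capacity2dicuts}. First I would record two observations that underpin the proof: the assignment $B \mapsto \hat{B}$ is a bijection between $\mathfrak{B}$ and $\hat{\mathfrak{B}}$, and $c(B) = \abs{\hat{B}}$ for every $B \in \mathfrak{B}$. Together these yield that the size of a $c${\nbd}cheapest dicut in $\mathfrak{B}$ equals the size of a smallest dicut in $\hat{\mathfrak{B}}$, reducing the proposition to showing that the cardinality of a largest $c${\nbd}disjoint family of $\mathfrak{B}${\nbd}dijoins of $D$ agrees with the cardinality of a largest disjoint family of $\hat{\mathfrak{B}}${\nbd}dijoins of $\hat{D}$.

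For the forward direction, I would start with a $c${\nbd}disjoint family $(J_i)_{i \in I}$ of $\mathfrak{B}${\nbd}dijoins of $D$. For each $e \in E(D)$, the set $I_e := \{ i \in I \setsep e \in J_i \}$ has cardinality at most $c(e)$ by $c${\nbd}disjointness; in particular, if $c(e) = 0$ then $e$ appears in no $J_i$, which is compatible with $e$ being deleted when passing to $\hat{D}$. For each $e$ I fix an injection $\iota_e \colon I_e \to \{\alpha \setsep 0 \leq \alpha < c(e)\}$, and set
\[
    \hat{J}_i := \{\, e_{\iota_e(i)} \setsep e \in J_i \,\}.
\]
The $\hat{J}_i$ are pairwise disjoint because the copies $e_{\iota_e(i)}$ chosen for different indices $i \in I_e$ are distinct. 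Given $\hat{B} \in \hat{\mathfrak{B}}$ with preimage $B \in \mathfrak{B}$, the dijoin $J_i$ contains some $e \in B$, and then $e_{\iota_e(i)}$ lies in $\hat{J}_i \cap \hat{B}$ since it has the same endpoints as $e$, so $\hat{J}_i$ is a $\hat{\mathfrak{B}}${\nbd}dijoin.

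For the backward direction, given a disjoint family $(\hat{J}_i)_{i \in I}$ of $\hat{\mathfrak{B}}${\nbd}dijoins of $\hat{D}$, I would project it to $D$ by setting $J_i := \{ e \in E(D) \setsep e_\alpha \in \hat{J}_i \text{ for some } \alpha \}$. Each $J_i$ is a $\mathfrak{B}${\nbd}dijoin by the symmetric argument: for $B \in \mathfrak{B}$, the dijoin $\hat{J}_i$ meets $\hat{B}$ in some $e_\alpha$, whence $e \in J_i \cap B$. The $c${\nbd}disjointness of $(J_i)_{i \in I}$ is then immediate: for each $e \in E(D)$, choosing for every $i$ with $e \in J_i$ some $\alpha_i$ with $e_{\alpha_i} \in \hat{J}_i$ produces an injection into the index set $\{ \alpha \setsep 0 \leq \alpha < c(e) \}$ of copies of $e$, so at most $c(e)$ dijoins in the family contain $e$.

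I do not anticipate a serious obstacle here; the whole argument is essentially bookkeeping, and the two directions are nearly symmetric. The only subtleties to handle carefully are the case $c(e) = 0$, where the absence of $e$ from $\hat{D}$ must match the impossibility of $e$ lying in any member of a $c${\nbd}disjoint family, and the use of cardinal arithmetic when some $c(e)$ is infinite, which is why I phrase the counting above in terms of injections into the copy index set rather than in terms of cardinal sums.
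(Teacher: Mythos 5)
Your proposal is correct and follows essentially the same route as the paper: both directions rest on the observation that a set of edges of $D$ is a $\mathfrak{B}$\nbd dijoin exactly when a corresponding set of parallel copies in $\hat{D}$ is a $\hat{\mathfrak{B}}$\nbd dijoin, with the backward direction given by taking traces and the forward direction by assigning distinct copies $e_\alpha$ to the dijoins containing $e$. The only cosmetic difference is that you fix the injections $\iota_e \colon I_e \to c(e)$ outright, whereas the paper produces them by a transfinite recursion choosing least unused indices.
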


\begin{proof} 
    For a set~$\hat{A}$ of edges of~$\hat{D}$ we call the set~${\tr(\hat{A}) := \{ e \in E(D) \setsep e_\alpha \in \hat{A}\}}$ the \emph{trace} of~$\hat{A}$. 
    For a set~$A$ of edges of~$D$ and a set~$\hat{A}$ of edges of~$\hat{D}$, we say the pair~$(A, \hat{A})$ is \emph{compatible} if~${A = \tr(\hat{A})}$. 
    Note that for each set~$A$ of edges~$e$ of~$D$ such that~${c(e) > 0}$ for all~${e \in A}$, there is a set of edges of~$\hat{D}$ such that~$(A, \hat{A})$ is compatible.
    
    For a compatible pair~${(F,\hat{F})}$ it is easy to see that~$F$ is a $\mathfrak{B}${\nbd}dijoin of~$D$ if and only if~$\hat{F}$ is a $\hat{\mathfrak{B}}${\nbd}dijoin of~$\hat{D}$. 
    Moreover, if~$\hat{\mathcal{F}}$ is a set of disjoint $\hat{\mathfrak{B}}${\nbd}dijoins of~$\hat{D}$, then~${\{ \tr(\hat{F}) \setsep \hat{F} \in \hat{\mathcal{F}} \}}$ is $c${\nbd}disjoint. 
    
    Given a $c${\nbd}disjoint set~${\{ F_\alpha \setsep \alpha \in \kappa \}}$ of $\mathfrak{B}${\nbd}dijoins of~$D$ for some cardinal~$\kappa$ we define a set of disjoint $\hat{\mathfrak{B}}${\nbd}dijoins iteratively as follows. 
    In step~$\alpha$ we construct~$\hat{F}_\alpha$ by taking for each~${e \in F_\alpha}$ the edge~$e_\beta$ for the smallest ordinal~$\beta$ such that~${e_\beta \notin \bigcup \{ \hat{F}_\gamma \setsep \gamma < \alpha \}}$. 
    By the assumption that~$e$ is contained in at most~${c(e)}$ many of the dijoins, this construction is well-defined and~$(F_\alpha, \hat{F}_\alpha)$ is compatible. 
    Hence we have the desired equivalence. 
\end{proof}

Recall that we say a class~$\mathfrak{B}$ of dicuts of a digraph~$D$ is \emph{dibond-closed} if every dibond which is contained in some dicut in~$\mathfrak{B}$ is contained in~$\mathfrak{B}$ as well. 
In the context of Question~\ref{quest:main} it is quite natural to consider classes which are dibond-closed. 
Indeed, whenever we consider Question~\ref{quest:main} for a dibond-closed class~$\mathfrak{B}$ we can equivalently consider the question for the class of dibonds in~$\mathfrak{B}$.
But by considering the construction from Proposition~\ref{prop:capacity} we may destroy weak connectivity by deleting the capacity~$0$ edges. 
Therefore, a dibond in~$\mathfrak{B}$ may correspond to a dicut in~$\hat{\mathfrak{B}}$ which is not a dibond. 
In this way, Question~\ref{quest:capacitated} for dibonds can be thought of as a special case of Question~\ref{quest:main}, albeit for a slightly modified digraph. 

In fact, the reverse is also true, as the following construction shows. 

\begin{construction}
    \label{construction:dicuts2capacity}
    Given a digraph~$D$ we define a digraph~$\tilde{D}$ with a capacity~$\tilde{c}$ of~$\tilde{D}$. 
    
    For every subset~${S \subseteq V(D)}$ we add a distinct vertex~$v_S$ and edges~${v_S s}$ for each~${s \in S}$. 
    Now we define a capacity~${\tilde{c}}$ on~$\tilde{D}$ by setting~${c(e) = 1}$ if~$e$ is an edge of~$D$ and~$0$ otherwise. 
    
    For each dicut~$B$ of~$D$ represented by~${(X,Y)}$, we define a corresponding dibond~$\tilde{B}$ of~$\tilde{D}$ as follows. 
    We define~${X_B \subseteq V(\tilde{D})}$ as the union of~$\insh_D(X,Y)$ with~${\{ v_S \setsep S \subseteq \insh_D(X,Y) \}}$. 
    Then we define~$\tilde{B}$ as the set of ingoing edges of~$X_B$ in~$\tilde{D}$, i.e.~${E_D(V(D) \setminus X_B, X_B)}$. 
    It is not hard to see that this is indeed well-defined 
    and that the size of~$B$ equals the capacity of~$\tilde{B}$ since the edges of capacity~$1$ that~$\tilde{B}$ contains are precisely the edges in~$B$. 
    
    Moreover, for a class~$\mathfrak{B}$ of dicuts of~$D$ we define~$\tilde{\mathfrak{B}}$ as~${\{ \tilde{B} \setsep B \in \mathfrak{B}\}}$. 
\end{construction}

\begin{proposition}
    \label{prop:capacity2}
    Let~$D$ be a digraph and let~$\mathfrak{B}$ be a class of dicuts of~$D$. 
    Then~$D$ is $\mathfrak{B}${\nbd}woodall if and only if~$\tilde{D}$ is $\tilde{\mathfrak{B}}${\nbd}woodall with respect to~$\tilde{c}$. 
\end{proposition}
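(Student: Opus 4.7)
The plan is to exploit two features of the construction: first, the ``auxiliary'' edges~$v_S s$ all have capacity~$0$, so any $\tilde{c}${\nbd}disjoint family of $\tilde{\mathfrak{B}}${\nbd}dijoins cannot use them at all; second, the identity ${\tilde{B} \cap E(D) = B}$ holds for every ${B \in \mathfrak{B}}$, so a $\tilde{\mathfrak{B}}${\nbd}dijoin of~$\tilde{D}$ that happens to live inside~$E(D)$ is exactly a $\mathfrak{B}${\nbd}dijoin of~$D$.

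First I would verify the claims stated inside the construction: namely that~$\tilde{B}$ is indeed a dibond of~$\tilde{D}$ (not merely a dicut, which matters because $\tilde{\mathfrak{B}}${\nbd}woodall is phrased in terms of dibonds) and that $\tilde{c}(\tilde{B}) = \abs{B}$. That~$\tilde{B}$ is a dicut is immediate from its definition as the set of ingoing edges of~$X_B$. The digraph~$\tilde{D}$ fails to be weakly connected because~$v_\emptyset$ is isolated, but~$\tilde{B}$ sits entirely inside the main weak component~$M$ of~$\tilde{D}$. Within~$M$, both sides of the bipartition induced by~$X_B$ are linked via the auxiliary vertices~$v_{\insh_D(X,Y)}$ and~$v_{\outsh_D(X,Y)}$ (both of which exist because $B$ non-empty forces both shores to be non-empty), so $X_B \cap M$ is a weak component of $\tilde{D} - \tilde{B}$ and $\tilde{B}$ is a dibond. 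The identity $\tilde{c}(\tilde{B}) = \abs{B}$ then follows because the only capacity-$1$ edges contained in~$\tilde{B}$ are precisely those in~$B$.

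Next I would match the two quantities appearing in the definition of $\mathfrak{B}${\nbd}woodall. The minimum values agree by $\tilde{c}(\tilde{B}) = \abs{B}$. For the maxima: given pairwise disjoint $\mathfrak{B}${\nbd}dijoins of~$D$, the same edge sets viewed inside~$E(\tilde{D})$ form a $\tilde{c}${\nbd}disjoint family of $\tilde{\mathfrak{B}}${\nbd}dijoins of~$\tilde{D}$, using ${B \subseteq \tilde{B}}$ together with $\tilde{c} \equiv 1$ on~$E(D)$. Conversely, in any $\tilde{c}${\nbd}disjoint family of $\tilde{\mathfrak{B}}${\nbd}dijoins of~$\tilde{D}$, the condition $\tilde{c}(v_S s) = 0$ forces no member to contain any auxiliary edge, so each member is a subset of~$E(D)$, the family is honestly pairwise disjoint, and the identity ${\tilde{B} \cap E(D) = B}$ turns each member into a $\mathfrak{B}${\nbd}dijoin of~$D$. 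Thus the two maxima coincide, and the claimed equivalence follows.

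The main obstacle, as I see it, is really just the first step: checking that the particular choice of~$X_B$ --- stuffing the in-side with \emph{all} vertices~$v_S$ indexed by subsets of~$\insh_D(X,Y)$ --- simultaneously makes~$\tilde{B}$ minimal as a cut and ensures ${\tilde{B} \cap E(D) = B}$ without any unintended $D${\nbd}edges sneaking into~$\tilde{B}$. Once that is in hand, the remainder is routine translation through the definitions of $\tilde{c}${\nbd}disjoint and $\tilde{\mathfrak{B}}${\nbd}dijoin.
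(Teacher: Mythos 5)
Your proposal is correct and follows essentially the same route as the paper: the inclusion ${B \subseteq \tilde{B}}$ gives one direction of the dijoin correspondence, the fact that $\tilde{c}$ vanishes exactly on the auxiliary edges and is the simple capacity~$1$ on~$E(D)$ gives the other, and ${\tilde{c}(\tilde{B}) = \abs{B}}$ matches the minima. The only difference is that you additionally verify the claims deferred to Construction~\ref{construction:dicuts2capacity} (that~$\tilde{B}$ is a dibond and that ${\tilde{B} \cap E(D) = B}$), which the paper treats as evident; your verification of these is sound.
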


\begin{proof}
    Since by construction~${B \subseteq \tilde{B}}$ for every dicut~$B$ of~$D$ we get that every $\mathfrak{B}${\nbd}dijoin of~$D$ is a $\tilde{\mathfrak{B}}${\nbd}dijoin of~$\tilde{D}$. 
    Vice versa, every $\tilde{\mathfrak{B}}${\nbd}dijoin of~$\tilde{D}$ that does not contain any capacity~$0$ edges is a $\mathfrak{B}${\nbd}dijoin of~$D$. 
    Since the capacity~$\tilde{c}$ is simple, the proposition immediately follows. 
\end{proof}

Therefore Question~\ref{quest:capacitated} for classes of dibonds can really be thought of Question~\ref{quest:main} for non dibond-closed classes of dicuts. 

In fact this observation will yield capacitated versions of many of our main results as it is easy to observe that if a class~$\mathfrak{B}$ of dibonds of a digraph~$D$ is nested, then so are~$\hat{\mathfrak{B}}$ and~$\tilde{\mathfrak{B}}$.

\section{The dicut hypergraph}
\label{sec:dicut-hypergraph}

Let~$D$ be a digraph and let~$\mathfrak{B}$ be a class of dicuts of~$D$.
Then~${\mathcal{H}(D, \mathfrak{B}) := \big( E(D) , \mathfrak{B} \big)}$ is the \emph{$\mathfrak{B}${\nbd}dicut hypergraph} of~$D$. 
Note that a transversal of~${\mathcal{H}(D, \mathfrak{B})}$ is a $\mathfrak{B}${\nbd}dijoin. 
If~$\mathfrak{B}$ is the set of all dicuts of~$D$, then we denote by~${\mathcal{H}(D)}$ the \emph{dicut hypergraph}~${\mathcal{H}(D, \mathfrak{B})}$.

\subsection{Transversal packings of hypergraphs}
\ 

Let~$\mathcal{H} = (X, H)$ be a hypergraph. 
A tuple~${(x_1, h_1, x_2, h_2, \dots, x_n, h_n, x_{n + 1})}$ is a \emph{Berge-cycle} of~$\mathcal{H}$
if
\begin{enumerate}
    \item ${x_1, \dots, x_n \in X}$ are distinct; 
    \item ${h_1, \dots, h_n \in H}$ are distinct; 
    \item ${x_{n+1} = x_1}$; and
    \item ${x_i, x_{i+1} \in h_i \in H}$ for all~${i \in [n]}$. 
\end{enumerate}
The \emph{length} of the Berge-cycle is~$n$. 
A Berge-cycle of odd length is called \emph{odd}. 
We call a Berge-cycle \emph{improper} if some hyperedge~$h_i$ contains some~$x_j$ for~${j \notin \{i, i+1\}}$. 

We call~$\mathcal{H}$ \emph{balanced} if every odd Berge-cycle is improper.
Balanced hypergraphs are one type of generalisation of bipartite graphs. 
By a theorem of Berge, finite balanced hypergraphs contain~$k$ pairwise disjoint transversals for~$k$ being the minimum size of a hyperedge. 
For the sake of completeness we will include a proof of this theorem. 

\begin{theorem}
    [Berge {\cite{berge}*{Corollary~2 of Section~5.3}}]
    \label{thm:berge}
    Every finite balanced hypergraph \linebreak
    ${\mathcal{H} = (X, H)}$ contains~${k := \min_{h \in H} \abs{h}}$ disjoint transversals of~$\mathcal{H}$.
\end{theorem}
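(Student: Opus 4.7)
The plan is by induction on $k$. The base case $k=1$ is immediate, as $X$ itself is a transversal (every hyperedge is non-empty). For the inductive step with $k \geq 2$, it suffices to construct a single transversal $T$ of $\mathcal{H}$ with the additional property that $\abs{h \setminus T} \geq k-1$ for every $h \in H$. Indeed, once such a $T$ is found, the partial subhypergraph
\[
    \mathcal{H}' \ := \ \bigl( X \setminus T,\ \{ h \setminus T \setsep h \in H \} \bigr)
\]
has minimum hyperedge size at least $k-1$ and is again balanced: any odd Berge-cycle of $\mathcal{H}'$ lifts to an odd Berge-cycle of $\mathcal{H}$ on the same vertices and hyperedges, with improperness transferring between the two. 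The inductive hypothesis then yields $k-1$ pairwise disjoint transversals $T_2, \ldots, T_k$ of $\mathcal{H}'$, which together with $T_1 := T$ form the desired $k$ disjoint transversals of $\mathcal{H}$.

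The main obstacle is the construction of such a non-wasteful transversal $T$. The natural reformulation is that its indicator vector $\chi_T \in \{0,1\}^X$ should satisfy the two-sided covering system
\[
    \mathbf{1} \ \leq \ A \chi_T \ \leq \ A \mathbf{1} - (k-1)\mathbf{1},
\]
where $A$ is the $0/1$ incidence matrix of $\mathcal{H}$ (rows indexed by $H$). The fractional vector $\tfrac{1}{k}\mathbf{1}$ is feasible for the LP relaxation of this system, so the corresponding polytope is non-empty. The content of balancedness is precisely that such polytopes are integral (Fulkerson--Hoffman--Oppenheim), which produces an integer feasible point and hence the desired $T$.

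Alternatively, and avoiding any appeal to polyhedral combinatorics, I would proceed by a direct combinatorial argument. Starting from $T_0 := X$, which is trivially a transversal, one tries to delete elements one by one; whenever a deletion would uncover some hyperedge, one swaps in a vertex from that hyperedge that is currently outside $T_0$. If no non-wasteful $T$ can be obtained in this way, the sequence of attempted deletions and forced swaps yields an alternating sequence of vertices and hyperedges which must eventually close up, giving rise to a Berge-cycle of odd length. The delicate step --- and the one where balancedness is genuinely used --- is verifying that the closure can be arranged to be \emph{proper}, so as to contradict the hypothesis and thereby guarantee the existence of $T$.
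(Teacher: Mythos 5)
Your proposal is correct in outline but takes a genuinely different route from the paper. The paper does not peel off transversals one at a time; it considers a $k$\nobreakdash-colouring of the ground set maximising ${\sum_{h \in H} \abs{c(h)}}$ and shows, using the $2$\nobreakdash-colourability of induced subhypergraphs of balanced hypergraphs (Theorem~\ref{thm:berge1}), that at the optimum every hyperedge receives all $k$ colours, so the colour classes themselves are the $k$ disjoint transversals; a suboptimal colouring is improved by re-$2$\nobreakdash-colouring the subhypergraph induced on two well-chosen colour classes. Your reduction is sound: a transversal $T$ with ${\abs{h \setminus T} \geq k-1}$ for all $h$ does yield the induction, since $\mathcal{H}'$ is an induced subhypergraph of $\mathcal{H}$ (all sets $h \setminus T$ being non-empty) and hence balanced, and the feasibility of $\tfrac{1}{k}\mathbf{1}$ checks out. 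However, all of the difficulty is concentrated in producing the non-wasteful $T$, and there your two justifications are of unequal value. The appeal to the Fulkerson--Hoffman--Oppenheim integrality theorem for polyhedra defined by balanced matrices is a legitimate and correct citation (the polytope is bounded and non-empty, hence has an integral vertex), but it invokes a result whose proof is at least as involved as the colouring argument the paper gives, and which is itself usually proved by exactly that kind of bicolouring induction; so this route buys brevity at the cost of self-containedness, whereas the paper's proof is elementary modulo Theorem~\ref{thm:berge1}, which it proves. Your second, ``swapping'' route is only a sketch: as you acknowledge, the step where an odd Berge-cycle would have to be arranged to be proper is precisely where the argument must engage with balancedness, and it is left unverified, so on its own it would constitute a genuine gap. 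As written, the proposal stands or falls with the FHO citation; with it, the proof is valid but not self-contained.
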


In order to provide a proof of this theorem, we will use the following characterisation of balanced hypergraphs. 

\begin{theorem}
    [Berge {\cite{berge}*{Theorem~7 of Section~5.3}}]
    \label{thm:berge1}
    A finite hypergraph~$\mathcal{H}$ is balanced, if and only if every induced subhypergraph of~$\mathcal{H}$ is $2${\nbd}colourable.
\end{theorem}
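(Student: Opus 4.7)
The plan is to prove the two implications separately. The direction from ``every induced subhypergraph is $2$\nbd colourable'' to ``balanced'' is a short contrapositive argument; the converse is the substantive part and I would prove it by induction on $\abs{X}$.

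\medskip

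For the easy direction, assume $\mathcal{H}$ contains a proper odd Berge-cycle ${(x_1, h_1, x_2, \dots, x_n, h_n, x_1)}$, and set ${Y := \{x_1, \dots, x_n\}}$. By properness, each trace ${h_i \cap Y}$ equals exactly ${\{x_i, x_{i+1}\}}$, so among the hyperedges of $\mathcal{H}[Y]$ we find $n$ distinct pairs forming a graph-theoretic cycle of odd length. Any putative $2$\nbd colouring of $\mathcal{H}[Y]$ would have to properly $2$\nbd colour this odd cycle, which is impossible; hence $\mathcal{H}[Y]$ is not $2$\nbd colourable, proving the contrapositive.

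\medskip

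For the converse, I would first observe that induced subhypergraphs of balanced hypergraphs are balanced: a Berge-cycle in $\mathcal{H}[Y]$ lifts verbatim to a Berge-cycle in $\mathcal{H}$ using the original hyperedges of $H$, and any witness ${x_j \in h_i}$ to impropriety in $\mathcal{H}$ automatically lies in $Y$. Consequently, it suffices to show that every balanced hypergraph is itself $2$\nbd colourable, after which the result for every $\mathcal{H}[Y]$ follows by applying the same statement to each (balanced) induced subhypergraph. I would prove $2$\nbd colourability of a balanced $\mathcal{H}$ by induction on $\abs{X}$. In the inductive step, pick a vertex $v$ and apply induction to $\mathcal{H}[X \setminus \{v\}]$ to obtain a $2$\nbd colouring $c'$. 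If neither colour can be assigned to $v$, there must exist hyperedges ${h_+, h_- \ni v}$ with ${h_+ \setminus \{v\} \subseteq c'^{-1}(1)}$ and ${h_- \setminus \{v\} \subseteq c'^{-1}(0)}$. Starting from $h_+$, I would build an alternating sequence of hyperedges and vertices by iteratively flipping colours so as to try to free a colour at $v$, and observe that maximality of this sequence forces it to close up into a Berge-cycle. The alternation of colour-flips pins down the parity to be odd, and the greedy choice of shortest such sequence ensures that no hyperedge on the cycle contains a third cycle-vertex, yielding a proper odd Berge-cycle and contradicting balancedness.

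\medskip

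The main obstacle will be the bookkeeping in the inductive step: namely, choosing the alternating sequence so that the hyperedges are pairwise distinct, the flipped vertices are pairwise distinct, the resulting cycle length is forced to be odd by the colour alternation, and — most delicately — no hyperedge on the cycle picks up an additional cycle-vertex as a member, which is what makes the final Berge-cycle proper. Establishing properness is the point where a careful minimality argument on the sequence is unavoidable, since the flipped vertices may in principle appear in many unrelated hyperedges.
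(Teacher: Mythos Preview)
Your easy direction matches the paper's. For the hard direction, however, there is a genuine gap at the parity step, and the paper takes a different (and shorter) route.

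In your alternating sequence ${v = x_1, h_1 = h_+, x_2, h_2, \ldots}$, the condition that each ${h_i \setminus \{x_i\}}$ be monochromatic under~$c'$ forces the colours ${c'(x_2), c'(x_3), \ldots}$ to alternate: since~$c'$ is a valid $2$-colouring of ${\mathcal{H}[X \setminus \{v\}]}$, any hyperedge~$h_i$ not containing~$v$ is already bichromatic, so ${h_i \setminus \{x_i\}}$ can only be monochromatic of the colour \emph{opposite} to~${c'(x_i)}$. Hence if the sequence first repeats at ${x_{k+1} = x_j}$ with ${j \geq 2}$, then ${c'(x_j) = c'(x_{k+1})}$ forces ${j \equiv k+1 \pmod 2}$, and the resulting Berge-cycle ${(x_j, h_j, \ldots, h_k, x_j)}$ has \emph{even} length ${k+1-j}$. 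So ``the alternation of colour-flips pins down the parity to be odd'' is backwards unless the cycle passes through~$v$ itself. For that you would need the sequence to return to~$v$ via some hyperedge through~$v$ --- but you only ever invoke~$h_+$, and you give no mechanism by which~$h_-$ (or any other hyperedge at~$v$) is reached. A two-sided construction using both~$h_+$ and~$h_-$ can be made to work, but it is substantially more delicate than your sketch indicates, and the properness step (``shortest such sequence'') still needs a real argument even then.

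The paper avoids all of this by not deleting an arbitrary vertex. It first observes that in a minimal counterexample the only obstruction to extending a $2$-colouring~$c'$ of ${\mathcal{H}[X \setminus \{x\}]}$ to~$x$ comes from hyperedges of size exactly~$2$ through~$x$ (any larger ${h \ni x}$ has ${h \setminus \{x\}}$ already bichromatic under~$c'$), so every vertex lies in at least two size-$2$ hyperedges. These size-$2$ hyperedges form a graph~$G$ of minimum degree at least~$2$, and~$G$ is bipartite because~$\mathcal{H}$ is balanced. Now delete a \emph{non-cut-vertex}~$x$ of~$G$: all $G$-neighbours of~$x$ lie in a single component of ${G - x}$ and hence, by bipartiteness, receive the same colour under~$c'$; therefore~$c'$ extends to~$x$, contradicting non-$2$-colourability. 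No alternating sequence, no parity bookkeeping.
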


\begin{proof}
    If~$\mathcal{H}$ contains an odd Berge-cycle~${(x_1, h_1, x_2, h_2, \dots, x_n, h_n, x_{n + 1})}$ which is not improper, then the hypergraph induced on~${\{x_i \setsep i \in [n]\}}$ contains the edges of an odd cycle, and hence is not $2${\nbd}colourable. 
    
    For the other direction it is enough to show that a finite balanced hypergraph is $2${\nbd}colourable. 
    Suppose for a contradiction that~${\mathcal{H} = (X,H)}$ is a counterexample with a ground set~$X$ of minimum size. 
    From the minimality we can deduce that each~${x \in X}$ is contained in at least two hyperedges of size~$2$ and hence the subgraph~$G$ of~$\mathcal{H}$ containing all hyperedges of size~$2$ has minimum degree~$2$. 
    Since~$\mathcal{H}$ is balanced,~$G$ is bipartite. 
    Let~${x \in X}$ be such that it is no cut-vertex of~$G$. 
    The hypergraph induced on~${X \setminus \{x\}}$ has a $2${\nbd}colouring by the minimality assumption. 
    Since~$G$ is bipartite and~$x$ is not a cut-vertex, the neighbourhood of~$x$ in~$G$ is monochromatic. 
    Hence we can extend the $2${\nbd}colouring to~$\mathcal{H}$, contradicting that~$\mathcal{H}$ is a counterexample. 
\end{proof}

Given this theorem, we can prove Theorem~\ref{thm:berge}. 

\begin{proof}[Proof of Theorem~\ref{thm:berge}]
    For~${k = 1}$ the statement is obvious and if~${k = 2}$, the statement follows directly from Theorem~\ref{thm:berge1} since each colour class of a proper $2${\nbd}colouring is a transversal. 
    If~${k > 2}$, consider a $k${\nbd}colouring~$c$ for which the sum~${\sum_{h \in H} \abs{c(h)}}$ is as large as possible. 
    Note that if this sum equals~${k \abs{H}}$, then each colour class of~$c$ is a transversal. 
    So suppose for a contradiction that the sum is smaller. 
    Then there is a hyperedge~$h_0$ with~${\abs{c(h_0)} < k}$. 
    Since by assumption every hyperedge has size at least~$k$, there is a colour~$p$ appearing twice on~$h_0$ as well as a colour~$q$ not appearing on~$h_0$. 
    Consider the subhypergraph induced on the colour classes~$S_p$ and~$S_q$ of these two colours. 
    By Theorem~\ref{thm:berge1}, this hypergraph has a $2${\nbd}colouring~$c'$. 
    But then 
    \[
        \hat{c}(x) := 
        \begin{cases}
            c(x) & \textnormal{ if } x \notin S_p \cup S_q\\
            c'(x) & \textnormal{ if } x \in S_p \cup S_q
        \end{cases}
    \]
    defines a $k${\nbd}colouring for which~${\sum_{h \in H} \abs{\hat{c}(h)} > \sum_{h \in H} \abs{c(h)}}$, a contradiction. 
\end{proof}

The technique of the compactness principle in combinatorics allows us to push these results about finite hypergraphs to infinite hypergraphs of \emph{finite character}, i.e.~hypergraphs in which every hyperedge is finite.
We omit stating the compactness principle here but refer to~\cite{diestel}*{Appendix~A}. 

\begin{lemma}
    \label{lem:compatness-hypergraphs}
    Let~${\mathcal{H} = (X, H)}$ be a hypergraph of finite character 
    such that for each finite~${Y \subseteq X}$ there is a finite~${\overline{Y} \subseteq X}$ containing~$Y$ such that~${\mathcal{H} \restricted \overline{Y}}$ 
    contains~${\min_{h \in H \restricted \overline{Y}} \abs{h}}$ disjoint transversals of~${\mathcal{H} \restricted \overline{Y}}$. 
    Then~$\mathcal{H}$ contains~${k := \min_{h \in H} \abs{h}}$ disjoint transversals of~$\mathcal{H}$. 
\end{lemma}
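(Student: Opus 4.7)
The plan is to apply the compactness principle by encoding ``$k$ disjoint transversals of $\mathcal{H}$'' as a system of local constraints on a function $c \colon X \to [k]$, and then verify that every finite subsystem is satisfiable using the hypothesis.

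First I would observe that producing $k$ disjoint transversals $T_1, \dots, T_k$ of $\mathcal{H}$ is equivalent to producing a function $c \colon X \to [k]$ such that $c(h) = [k]$ for every $h \in H$: the direction from transversals to $c$ works by setting $c(x) = i$ whenever $x \in T_i$ (and arbitrarily on elements of no transversal), while the direction from $c$ to transversals sets $T_i := c^{-1}(i)$. Since $\mathcal{H}$ has finite character, for each $h \in H$ the set $C_h := \{c \in [k]^X \setsep c(h) = [k]\}$ depends only on the finitely many coordinates indexed by $h$, hence is clopen in the product topology on $[k]^X$ with the discrete topology on $[k]$. By Tychonoff's theorem, $[k]^X$ is compact, so $\bigcap_{h \in H} C_h$ is nonempty provided every finite subfamily has nonempty intersection.

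Next I would verify the finite intersection property. Fix a finite $H_0 \subseteq H$ and let $Y := \bigcup H_0$, which is finite since each $h \in H_0$ is finite. Apply the hypothesis to obtain a finite $\overline{Y} \subseteq X$ with $Y \subseteq \overline{Y}$ such that $\mathcal{H} \restricted \overline{Y}$ contains $k' := \min_{h \in H \restricted \overline{Y}} \abs{h}$ pairwise disjoint transversals. Since $H \restricted \overline{Y} \subseteq H$, we have $k' \geq k$, and since every $h \in H_0$ satisfies $h \subseteq Y \subseteq \overline{Y}$, we have $H_0 \subseteq H \restricted \overline{Y}$. Pick $k$ of the guaranteed disjoint transversals $T_1, \dots, T_k \subseteq \overline{Y}$ of $\mathcal{H} \restricted \overline{Y}$, define $c \colon X \to [k]$ by $c(x) := i$ if $x \in T_i$ and $c(x) := 1$ otherwise. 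For every $h \in H_0$ and every $i \in [k]$, the transversal property inside $\mathcal{H} \restricted \overline{Y}$ gives $T_i \cap h \neq \emptyset$, so $i \in c(h)$; thus $c \in \bigcap_{h \in H_0} C_h$.

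Finally, compactness yields some $c^\ast \in \bigcap_{h \in H} C_h$, and then $T_i := (c^\ast)^{-1}(i)$ for $i \in [k]$ are the desired $k$ pairwise disjoint transversals of $\mathcal{H}$. There is no real obstacle here; the only subtlety worth flagging is that the restricted hypergraph $\mathcal{H} \restricted \overline{Y}$ may have a strictly larger ``minimum hyperedge size'' than $\mathcal{H}$, but this only helps, since the monotonicity $k' \geq k$ means the hypothesis always delivers at least $k$ disjoint transversals on each finite witness, which is exactly what the finite subsystems of constraints require.
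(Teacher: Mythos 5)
Your proof is correct and follows essentially the same route as the paper: both are compactness arguments that reduce the statement to the finite witnesses $\overline{Y}$ supplied by the hypothesis, using the monotonicity observation that restricting a transversal (or having $k' \geq k$) only helps. The only cosmetic difference is that you make the compactness step explicit via Tychonoff on the colouring space $[k]^X$, whereas the paper invokes the combinatorial compactness principle directly on tuples of restricted transversals.
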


\begin{proof}
    We construct the~$k$ disjoint transversals of~$\mathcal{H}$ via compactness. 
    Given sets~$Z$ and~$Y$ with~${Z \subseteq Y \subseteq X}$, note that for a transversal~$T$ of~${\mathcal{H} \restricted Y}$ the set~${T \cap Z}$ is a transversal of~${\mathcal{H} \restricted Z}$. 
    Hence, by the assumptions of this lemma and the compactness principle there is a set~${\{ T_i \setsep i \in [k] \}}$ of subsets of~$X$ such that for each finite~$Y \subseteq X$ the set~${\{ T_i \cap \overline{Y} \setsep i \in [k] \}}$ is a set of~$k$ disjoint transversals of~${\mathcal{H} \restricted \overline{Y}}$. 
    Hence~${T_i \cap T_j = \emptyset}$ for~${i \neq j}$. 
    Moreover, each~$T_i$ meets any~${h \in H}$ since~${T_i \cap \overline{h}}$ is a transversal of~${\mathcal{H} \restricted \overline{h}}$ which is a finite subhypergraph of~$\mathcal{H}$ since~$\mathcal{H}$ has finite character. 
    Therefore,~${\{ T_i \setsep i \in [k] \}}$ is as desired. 
\end{proof}

Since every partial subhypergraph of a balanced hypergraph is again balanced (cf.~\cite{berge}*{Proposition~1 of Section~5.3}, we obtain the following corollary of Lemma~\ref{lem:compatness-hypergraphs} and Theorem~\ref{thm:berge}. 

\begin{corollary}
    \label{cor:berge-finite-character}
    Every balanced hypergraph~${\mathcal{H} = (X, H)}$ of finite character 
    contains \linebreak 
    ${k := \min_{h \in H} \abs{h}}$ disjoint transversals of~$\mathcal{H}$. \qed
\end{corollary}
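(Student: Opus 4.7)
The plan is to deduce the corollary by combining Berge's theorem (Theorem~\ref{thm:berge}) for finite balanced hypergraphs with the compactness lemma (Lemma~\ref{lem:compatness-hypergraphs}). The main task is to verify the hypothesis of the compactness lemma: for every finite $Y \subseteq X$, we must exhibit a finite $\overline{Y} \subseteq X$ with $Y \subseteq \overline{Y}$ such that $\mathcal{H} \restricted \overline{Y}$ contains $\min_{h \in H \restricted \overline{Y}} \abs{h}$ pairwise disjoint transversals.

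The key observation is that we may simply take $\overline{Y} := Y$. Indeed, since $\mathcal{H}$ has finite character every hyperedge of $H$ is finite, and since $\overline{Y}$ is a finite set, the collection $H \restricted \overline{Y} = \{h \in H \setsep h \subseteq \overline{Y}\}$ is a finite family of finite sets. Therefore $\mathcal{H} \restricted \overline{Y}$ is a finite hypergraph. Moreover, $\mathcal{H} \restricted \overline{Y}$ is a partial subhypergraph of $\mathcal{H}$, so by the cited Proposition~1 of Section~5.3 of~\cite{berge} (the preservation of balancedness under taking partial subhypergraphs) $\mathcal{H} \restricted \overline{Y}$ is itself balanced.

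Having arranged that $\mathcal{H} \restricted \overline{Y}$ is a finite balanced hypergraph, Theorem~\ref{thm:berge} applies directly and yields the required $\min_{h \in H \restricted \overline{Y}} \abs{h}$ pairwise disjoint transversals of $\mathcal{H} \restricted \overline{Y}$. Note that since every hyperedge of $H$ has size at least $k$, the same holds for every hyperedge of $\mathcal{H} \restricted \overline{Y}$, so Berge's theorem even supplies at least $k$ disjoint transversals (more than enough for our purposes). With the hypothesis of Lemma~\ref{lem:compatness-hypergraphs} now satisfied for every finite $Y \subseteq X$, that lemma immediately produces $k$ pairwise disjoint transversals of~$\mathcal{H}$, which is the desired conclusion.

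I do not foresee a genuine obstacle here: both nontrivial ingredients (Berge's theorem and the compactness lemma) have been proved just above, and the only content of the corollary is to check that partial subhypergraphs of a balanced hypergraph on finite vertex sets are finite and balanced, which is immediate from the finite character hypothesis and the cited hereditary property of balancedness.
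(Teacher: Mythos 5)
Your argument is correct and is precisely the route the paper takes: the corollary is stated there as an immediate consequence of Theorem~\ref{thm:berge} and Lemma~\ref{lem:compatness-hypergraphs} via the hereditary property of balancedness under partial subhypergraphs, and you have merely made the verification of the compactness lemma's hypothesis (with ${\overline{Y} := Y}$) explicit. The only point glossed over --- by you and by the paper alike --- is the degenerate case where ${H \restricted Y}$ is empty, so that ${\min_{h \in H \restricted \overline{Y}} \abs{h}}$ is undefined; this is easily avoided, for instance by enlarging~$\overline{Y}$ to contain a fixed hyperedge of minimum size.
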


\subsection{Applications to the dicut hypergraph}
\ 

We now turn our attention back to the dicut hypergraph and will show that the dicut hypergraph for a nested class of dicuts is balanced. 
Note that the following lemma holds for infinite digraphs as well. 

\begin{lemma}
    \label{lem:balanced}
    For any digraph~$D$, 
    every odd Berge-cycle~${(e_1, B_1, e_2, B_2, \hdots, e_{n}, B_{n}, e_1)}$ of the dicut hypergraph~${\mathcal{H}(D)}$ of~$D$ 
    for which the set~${\{ B_i \setsep i \in [n]\}}$ is nested, is improper. 
\end{lemma}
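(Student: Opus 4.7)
My plan is to argue by contradiction: suppose the Berge-cycle is proper, so that each edge $e_i$ lies in $B_j$ precisely when $j\in\{i-1,i\}$ (cyclic indices). For each $i$, let $(X_i, Y_i)$ be the nested representation of $B_i$ with $X_i$ the in-shore; then $v_i\in X_{i-1}\cap X_i$ while $u_i\notin X_{i-1}\cup X_i$. The presence of $v_i$ in both in-shores excludes the ``disjoint'' nested case for $(B_{i-1}, B_i)$, and the absence of $u_i$ from their union excludes the ``complementary'' case, so either $X_{i-1}\subsetneq X_i$ or $X_i\subsetneq X_{i-1}$.

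Next, I would invoke the structure tree of a nested family of bipartitions: a finite tree $T$ whose $n$ edges are in bijection with $B_1,\dots,B_n$, together with a map $V(D)\to V(T)$ realising each bipartition $(X_i, Y_i)$ by deletion of the corresponding tree-edge. Orienting every tree-edge from its out-shore endpoint to its in-shore endpoint, each edge $e$ of $D$ traces a directed path in $T$ whose tree-edges are exactly the cuts $B_j$ with $e\in B_j$. By properness, the path of $e_i$ has length exactly $2$, with middle vertex $c_i$ a common endpoint of the tree-edges $X_{i-1}$ and $X_i$.

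Now consider the cyclic sequence $c_1,c_2,\dots,c_n,c_1$: at each index $i$ either $c_i = c_{i+1}$ (the same endpoint of $X_i$) or $c_i$ and $c_{i+1}$ are the two distinct endpoints of $X_i$. The set of indices where they differ would yield a closed walk in $T$ using distinct tree-edges, which is impossible by acyclicity of $T$; hence $c_i = c_{i+1}$ for all $i$, and there is a single vertex $c$ incident to every one of $X_1,\dots,X_n$. At $c$ the directed path of $e_i$ enters along one of $X_{i-1}, X_i$ and leaves along the other, so these two tree-edges have opposite ``in/out'' orientations at $c$; tracking these orientations cyclically over $n$ consecutive pairs forces $n$ to be even, contradicting that $n$ is odd.

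The main technical obstacle will be invoking or constructing the structure tree of the nested family cleanly; once it is in place, the parity argument above is short. A more elementary alternative is a direct case analysis of the relative inclusions $X_{i-1}\subsetneq X_i$ versus $X_i\subsetneq X_{i-1}$ around the cycle, using the non-adjacency constraints of properness to propagate contradictions, but the tree picture makes the parity obstruction to odd $n$ much more transparent.
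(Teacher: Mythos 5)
Your argument is correct, but it takes a genuinely different route from the paper's. The paper works directly with the partial order on the chosen nested representations: since ${e_{i+1} \in B_i \cap B_{i+1}}$ forces consecutive bipartitions to be $\leq$\nbd comparable, the oddness of~$n$ yields a position where the direction of comparability does not alternate, say ${(X_n,Y_n) \leq (X_1,Y_1) \leq (X_2,Y_2)}$; one then tracks the set of indices~$i$ whose bipartition lies ``on the far side'' of ${(X_1,Y_1)}$ and finds a transition index~$j$ at which ${e_{j+1} \in B_1 \cap B_j \cap B_{j+1}}$, an explicit witness of impropriety. You instead assume properness and derive a contradiction via the structure tree of the nested family: properness forces the tree-path of each~$e_i$ to consist of exactly the two tree-edges for~$B_{i-1}$ and~$B_i$, acyclicity collapses the cyclic sequence of their common endpoints to a single star centre~$c$, and the in/out orientations at~$c$ (each~$e_i$ entering along one of its two tree-edges and leaving along the other) give a parity obstruction to~$n$ being odd. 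Both arguments are sound. Yours is more conceptual — the parity obstruction becomes transparent — but it rests on the existence of a structure tree with one edge per bipartition for a finite nested family of non-trivial bipartitions, which you invoke rather than construct; this is standard (it is the tree underlying a laminar family), but it is exactly the machinery the paper's elementary comparability-chain argument avoids, and the ``elementary alternative'' you mention at the end is essentially the paper's proof. One shared caveat: both arguments implicitly assume ${n \geq 3}$ (your length-two path for~$e_i$ degenerates when ${n = 1}$), which is the usual convention for Berge-cycles.
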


\begin{proof}
    Let~${(e_1, B_1, e_2, B_2, \hdots, e_{n}, B_{n}, e_1)}$ be and odd Berge-cycle 
    and let~${\{ (X_i, Y_i) \setsep i \in [n]\}}$ be a nested set of bipartitions of~${V(D)}$ representing~${\{ B_i \setsep i \in [n]\}}$ such that~${(X_i,Y_i)}$ represents~$B_i$ and~${\insh_D(X_i,Y_i) = Y_i}$ for all~${i \in [n]}$. 
    
    By setting~$B_0 := B_n$,~$X_0 := X_n$ and~${Y_0 := Y_n}$, since~${e_{i+1} \in B_i \cap B_{i+1}}$ for all~${0 \leq i < n}$, 
    we get either~${(X_i,Y_i) \leq (X_{i+1},Y_{i+1})}$ or~${(X_{i+1},Y_{i+1}) \leq (X_i, Y_i)}$. 
    While~$n$ is odd, these two possibilities cannot occur in an alternating fashion throughout the whole cycle.
    Hence we may assume without loss of generality that either~${(X_n,Y_n) \leq (X_1,Y_1) \leq (X_2,Y_2)}$ or~${(X_2,Y_2) \leq (X_1,Y_1) \leq (X_n,Y_n)}$. 
    We continue the argument with the former inequality, the other case is symmetric. 
    
    Consider the set~${I}$ of all~${i \in [n]}$ for which 
    either~${(X_1,Y_1) \leq (X_i,Y_i)}$ or~${(Y_i,X_i) \leq (X_1,Y_1)}$. 
    Since~${2 \in I}$ and~${n \notin I}$, 
    there is an integer~$j$ with~${2 \leq j < n}$ with~${j \in I}$ and~${j+1 \notin I}$. 
    And since~${B_j \cap B_{j+1}}$ is nonempty and hence~${(X_j,Y_j)}$ and~${(X_{j+1},Y_{j+1})}$ are $\leq${\nbd}comparable, we get 
    either~${(Y_j,X_j) \leq (X_1,Y_1) \leq (Y_{j+1},X_{j+1})}$, 
    or~${(X_{j+1},Y_{j+1}) \leq (X_1,Y_1) \leq (X_j,Y_j)}$. 
    
    Note that the first case is not possible since~$e_{j+1}$ would be an edge with tail in~$Y_1$ and head in~$X_1$, contradicting that~$Y_1$ is the in-shore of~$B_1$. 
    Hence,~${(X_{j+1},Y_{j+1}) \leq (X_1,Y_1) \leq (X_j,Y_j)}$ and~${e_{j+1} \in B_1 \cap B_j \cap B_{j+1}}$, proving that~${(e_1, B_1, e_2, B_2, \hdots, e_{n}, B_{n}, e_1)}$ is improper. 
\end{proof}

Hence, if~$\mathfrak{B}$ is a nested class of dicuts of~$D$, Lemma~\ref{lem:balanced} shows that~${\mathcal{H}(D, \mathfrak{B})}$ is balanced, and with Theorem~\ref{thm:berge} we get Theorem~\ref{thm:nested-woodall} for finite digraphs, 
and together with Lemma~\ref{lem:compatness-hypergraphs} we can complete the proof.

\setcounter{thm-intro}{2}
\begin{thm-intro}
    Let~$D$ be a digraph 
    and~$\mathfrak{B}$ be a nested class of finite dicuts of~$D$. 
    Then~$D$ is $\mathfrak{B}${\nbd}woodall. 
\end{thm-intro}

\begin{proof}
    Let~$k$ denote the size of a smallest dicut in~$\mathfrak{B}$. 
    By Lemma~\ref{lem:balanced}, the $\mathfrak{B}${\nbd}dicut hypergraph~${\mathcal{H} := \mathcal{H}(D,\mathfrak{B})}$ is balanced and by assumption has finite character. 
    Each finite restriction of~$\mathcal{H}$ whose set of hyperedges is non-empty is balanced and hence contains~$k$ disjoint transversals by Theorem~\ref{thm:berge}. 
    The theorem follows from Lemma~\ref{lem:compatness-hypergraphs}.
\end{proof}

With the observations of Section~\ref{sec:capacity}, we obtain the capacitated version of this theorem. 

\begin{corollary}
    \label{cor:nested-woodall-cap}
    Let~$D$ be a digraph with a capacity~${c}$ 
    and let~$\mathfrak{B}$ be a nested class of dibonds of~$D$ of finite capacity. 
    Then~$D$ is $\mathfrak{B}${\nbd}woodall with respect to~$c$. 
    \qed
\end{corollary}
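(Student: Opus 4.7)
The plan is to reduce Corollary~\ref{cor:nested-woodall-cap} to the uncapacitated Theorem~\ref{thm:nested-woodall} via the machinery of Section~\ref{sec:capacity}. Given~$D$, $c$ and~$\mathfrak{B}$ as in the statement, first I would apply Construction~\ref{construction:capacity2dicuts} to obtain the digraph~$\hat{D}$ and the class of dicuts~$\hat{\mathfrak{B}}$ of~$\hat{D}$. By construction the size of~$\hat{B}$ equals the capacity of~$B$ for every~${B \in \mathfrak{B}}$, so the assumption that every dibond in~$\mathfrak{B}$ has finite capacity translates into the statement that every dicut in~$\hat{\mathfrak{B}}$ is finite.

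Next I would verify that~$\hat{\mathfrak{B}}$ is a nested class of dicuts of~$\hat{D}$. Since~${V(\hat{D}) = V(D)}$ and, by Construction~\ref{construction:capacity2dicuts}, a bipartition~${(X,Y)}$ of~${V(D)}$ represents~$B$ if and only if it represents~$\hat{B}$, any nested set of bipartitions witnessing nestedness of~$\mathfrak{B}$ witnesses nestedness of~$\hat{\mathfrak{B}}$ verbatim. Hence~$\hat{\mathfrak{B}}$ is indeed a nested class of finite dicuts of~$\hat{D}$.

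With these two observations in place, Theorem~\ref{thm:nested-woodall} applied to~$\hat{D}$ and~$\hat{\mathfrak{B}}$ yields that~$\hat{D}$ is $\hat{\mathfrak{B}}${\nbd}woodall, and Proposition~\ref{prop:capacity} transfers this conclusion back to~$D$ with respect to~$c$. There is no real obstacle here: the only thing to check is the preservation of nestedness under passing from~${(D, c, \mathfrak{B})}$ to~${(\hat{D}, \hat{\mathfrak{B}})}$, which is immediate because the vertex set and therefore the set of candidate bipartitions is untouched by the edge-replacement in Construction~\ref{construction:capacity2dicuts}.
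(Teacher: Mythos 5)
Your proposal is correct and follows exactly the route the paper intends: the corollary is stated with an immediate \qed precisely because it combines Construction~\ref{construction:capacity2dicuts}, the observation at the end of Section~\ref{sec:capacity} that nestedness of~$\mathfrak{B}$ is inherited by~$\hat{\mathfrak{B}}$, Theorem~\ref{thm:nested-woodall} applied to~$\hat{D}$, and Proposition~\ref{prop:capacity} to transfer back. Your only (harmless) over-claim is the ``if and only if'' about representations --- a bipartition representing~$\hat{B}$ need not represent~$B$ once capacity-$0$ edges are deleted --- but only the forward direction is needed to transfer a nested set of representations, and that direction holds.
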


A dicut of~$D$ is \emph{atomic} if it has a representation in which one the sides contains only a single vertex, i.e.~a source or a sink. 
It is easily verified that a set of atomic dicuts is nested. 
Hence, we get the following corollary. 

\begin{corollary}
    \label{cor:atomic-woodall}
    Let~$D$ be a digraph (with a capacity~${c}$)
    and let~$\mathfrak{B}$ be a class of atomic dicuts of~$D$ of finite size (capacity). 
    Then~$D$ is $\mathfrak{B}${\nbd}woodall (with respect to~$c$). 
    \qed
\end{corollary}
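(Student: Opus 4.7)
The plan is to deduce Corollary~\ref{cor:atomic-woodall} directly from Theorem~\ref{thm:nested-woodall} (and from Corollary~\ref{cor:nested-woodall-cap} for the capacitated version) by verifying that any class $\mathfrak{B}$ of atomic dicuts is nested. In that case both statements are immediate: the finite-size assumption on $\mathfrak{B}$ (respectively, the finite-capacity assumption) is precisely what is required by the theorem being invoked.

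The key step is choosing a single canonical representation for each atomic dicut. For every $B \in \mathfrak{B}$ fix a vertex $v_B \in V(D)$ that is either a source or a sink of $D$ and such that $B$ is represented by ${\mathcal{R}_B := (\{v_B\}, V(D) \setminus \{v_B\})}$. I would then take $\mathcal{R} := \{ \mathcal{R}_B \setsep B \in \mathfrak{B} \}$ as the candidate nested family of bipartitions representing $\mathfrak{B}$.

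To check nestedness, pick two distinct $B_1, B_2 \in \mathfrak{B}$ with associated vertices $v_1 \neq v_2$. Since $v_1 \in V(D) \setminus \{v_2\}$ and $v_2 \in V(D) \setminus \{v_1\}$, we have ${\{v_1\} \subseteq V(D) \setminus \{v_2\}}$ and ${V(D) \setminus \{v_1\} \supseteq \{v_2\}}$, so
\[
    \mathcal{R}_{B_1} = (\{v_1\}, V(D) \setminus \{v_1\}) \;\leq\; (V(D) \setminus \{v_2\}, \{v_2\}),
\]
and the right-hand side is the flip of $\mathcal{R}_{B_2}$. Hence $\mathcal{R}_{B_1}$ and $\mathcal{R}_{B_2}$ are nested, so $\mathcal{R}$ is a nested set of bipartitions witnessing that $\mathfrak{B}$ is a nested class of dicuts.

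With nestedness in hand, Theorem~\ref{thm:nested-woodall} yields that $D$ is $\mathfrak{B}${\nbd}woodall in the uncapacitated case, and Corollary~\ref{cor:nested-woodall-cap} yields the same with respect to $c$ in the capacitated case. There is no real obstacle: once the canonical representation is fixed, nestedness reduces to the trivial set-theoretic inclusions $\{v_1\} \subseteq V(D) \setminus \{v_2\}$, and the heavy lifting has already been done in the earlier sections via the balanced hypergraph machinery and, for the capacitated version, via Construction~\ref{construction:capacity2dicuts}.
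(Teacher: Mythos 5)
Your proposal is correct and takes essentially the same route as the paper: the paper deduces Corollary~\ref{cor:atomic-woodall} from Theorem~\ref{thm:nested-woodall} and Corollary~\ref{cor:nested-woodall-cap} by noting that ``it is easily verified that a set of atomic dicuts is nested,'' and your argument with the canonical singleton representations $(\{v_B\}, V(D)\setminus\{v_B\})$ is exactly that verification spelled out. No gap.
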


\section{Minimum size dicuts and disjoint mini-dijoins}
\label{sec:mini-woodall}

Let~$D$ be a 
digraph 
and let $\mathfrak{B}$ be a class of dicuts of~$D$. 
We say~$\mathfrak{B}$ is \emph{corner-closed} if for each non-empty~${B, B' \in \mathfrak{B}}$ which are crossing and represented by~$(X,Y)$ and~$(X',Y')$, respectively, the dicuts
\[
    {B \wedge B' := {E}_D( \outsh_D(X,Y) \cup \outsh_D(X',Y'), \insh_D(X,Y) \cap \insh_D(X',Y') )} 
\]
and 
\[
    {B \vee B' := {E}_D( \outsh_D(X,Y) \cap \outsh_D(X',Y'), \insh_D(X,Y) \cup \insh_D(X',Y') )}
\]
are in~$\mathfrak{B}$. 
Note that it is easy to see that~${B \wedge B'}$ and~${B \vee B'}$ are indeed 
non-empty
dicuts whose definition does not depend on the choice of the representations of~$B$ and~$B'$. 
In particular, for any representations~${(X,Y)}$ and~${(X',Y')}$ of~$B$ and~$B'$, respectively, with~${\insh_D(X,Y) = Y}$ and~${\insh_D(X',Y') = Y'}$, the bipartition~$(X \cup X', Y \cap Y')$ represents~${B \wedge B'}$ and the bipartition~${(X \cap X', Y \cup Y')}$ represents~${B \vee B'}$. 

Moreover, we observe that for the digraphs~$\hat{D}$ and~$\tilde{D}$ from Constructions~\ref{construction:capacity2dicuts} and~\ref{construction:dicuts2capacity}, respectively, 
for dicuts~$B$ and~$B'$ of~$D$, we obtain
\[
    \hat{B} \wedge \hat{B'} = \widehat{B \wedge B'}, \
    \hat{B} \vee \hat{B'} = \widehat{B \vee B'}, \ 
    \tilde{B} \wedge \tilde{B'} = \widetilde{B \wedge B'}, \textnormal { and }
    \tilde{B} \vee \tilde{B'} = \widetilde{B \vee B'}.
\]
In particular, a class~$\mathfrak{B}$ of dicuts of~$D$ is corner-closed, if and only if~$\hat{\mathfrak{B}}$ is corner-closed, if and only if~$\tilde{\mathfrak{B}}$ is corner-closed. 

\begin{remark}
    \label{rem:colouring}
    Similarly to the proof of Theorem~\ref{thm:berge}, we can consider a set~$\{ F_i \setsep i \in [m]\}$ of disjoint $\mathfrak{B}${\nbd}dijoins of a digraph~$D$ as a partial colouring of the edges of~$D$ where an edge~$e$ is coloured with the colour~$i$ if and only if~${e \in F_i}$. 
    We call such a colouring~${f \colon \bigcup \{ F_i \setsep i \in [m] \} \to [m]}$ a \emph{$\mathfrak{B}${\nbd}woodall colouring of~$D$}. 
    
    Since each~$F_i$ is a $\mathfrak{B}${\nbd}dijoin, we obtain that each dicut~${B \in \mathfrak{B}}$ is coloured with every colour. 
    Note that if~${\lvert B \rvert = m}$, then~$B$ is necessarily \emph{colourful}, i.e.~$B$ contains every colour. 
\end{remark}

\begin{theorem}
    \label{thm:m-woddall}
    Let~$D$ be a finite digraph 
    (with capacity~${c}$), 
    let~$m$ be a positive integer 
    and let~$\mathfrak{B}$ denote a corner-closed class of dicuts of~$D$ all of size~$m$ 
    (capacity~$m$). 
    Then~$D$ is $\mathfrak{B}${\nbd}woodall (with respect to~$c$). 
\end{theorem}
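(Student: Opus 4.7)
The plan is to reduce Theorem~\ref{thm:m-woddall} to the nested case of Theorem~\ref{thm:nested-woodall}, via an induction on the number of unordered crossing pairs in~$\mathfrak{B}$. First, using Construction~\ref{construction:capacity2dicuts} and Proposition~\ref{prop:capacity}, together with the observation after Construction~\ref{construction:dicuts2capacity} that the map $\mathfrak{B} \mapsto \hat{\mathfrak{B}}$ preserves corner-closedness, the capacitated version reduces to the uncapacitated one; so I may assume $c$ is simple, and that $\mathfrak{B}$ is a corner-closed class of dicuts each of size~$m$.

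In the base case, when $\mathfrak{B}$ is nested, Theorem~\ref{thm:nested-woodall} directly provides $m$ disjoint $\mathfrak{B}${\nbd}dijoins. In the inductive step, fix a crossing pair $B, B' \in \mathfrak{B}$; by corner-closedness, $B \wedge B'$ and $B \vee B'$ lie in~$\mathfrak{B}$. Since every member of~$\mathfrak{B}$ has size~$m$, submodularity of the dicut-size function yields $|B \wedge B'| = |B \vee B'| = m$ (equality in submodularity), and analysing the resulting edge partition gives the identities $B \cup B' = (B \wedge B') \cup (B \vee B')$ and $(B \wedge B') \cap (B \vee B') = B \cap B'$ at the edge level. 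The next step is to extract a corner-closed proper subclass $\mathfrak{B}' \subsetneq \mathfrak{B}$ with strictly fewer crossings; applying the inductive hypothesis to~$\mathfrak{B}'$ produces $m$ pairwise disjoint $\mathfrak{B}'${\nbd}dijoins $F_1, \ldots, F_m$.

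It then remains to show that the $F_i$ are automatically $\mathfrak{B}${\nbd}dijoins, that is, each $F_i$ also meets $B$ and $B'$. Via the colouring viewpoint of Remark~\ref{rem:colouring}, the $F_i$ furnish a partial $m${\nbd}colouring under which each size-$m$ member of~$\mathfrak{B}'$ is colourful; in particular $B \wedge B'$ and $B \vee B'$ each receive every colour exactly once, so every edge of $B \cup B' = (B \wedge B') \cup (B \vee B')$ is coloured. A case analysis on whether a colour's representative edges in $B \wedge B'$ and in $B \vee B'$ coincide (and therefore lie in $B \cap B'$) or are distinct, combined with the partition $B \cup B' = (B \cap B') \cup (B \setminus B') \cup (B' \setminus B)$, is then used to argue colourfulness of~$B$ and of~$B'$. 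The main obstacle I expect is the borderline case in which, for some colour, both of its representatives in the two corners lie in $B \setminus B'$ (or symmetrically in $B' \setminus B$), so that the opposite cut could a priori miss that colour; handling this will likely require either a careful choice of~$\mathfrak{B}'$ so that the counts across the three parts of $B \cup B'$ force such borderline colours to be absent, or an augmenting/exchange argument swapping edges between the $F_i$'s along the colour-conflict structure, in the spirit of standard uncrossing techniques.
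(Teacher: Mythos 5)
Your proposal has a genuine gap, and it is exactly the ``borderline case'' you flag at the end: the implication you would need --- that colourfulness of the two corners ${B \wedge B'}$ and ${B \vee B'}$ forces colourfulness of~$B$ and~$B'$ --- is simply false, and neither of your suggested repairs can rescue it. Writing ${a = \abs{B \cap B'}}$, ${b_1 = \abs{B \cap (B \vee B') \setminus B'}}$, ${b_2 = \abs{B \cap (B \wedge B') \setminus B'}}$, ${c_1 = \abs{B' \cap (B \vee B') \setminus B}}$, ${c_2 = \abs{B' \cap (B \wedge B') \setminus B}}$, the four size\nbd$m$ conditions give only ${b_1 = c_2}$ and ${b_2 = c_1}$, so counting does not exclude the bad configuration. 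Concretely, with ${m = 2}$, ${a = 0}$, ${b_1 = b_2 = c_1 = c_2 = 1}$, colour the two edges of ${B \setminus B'}$ with colour~$1$ and the two edges of ${B' \setminus B}$ with colour~$2$: both corners are colourful, yet~$B$ misses colour~$2$ and~$B'$ misses colour~$1$. There is also a second, structural problem with the induction itself: removing~$B$ (or~$B'$) from~$\mathfrak{B}$ need not leave a corner-closed class, since the removed dicut may itself be a corner of another crossing pair, and no concrete corner-closed proper subclass~$\mathfrak{B}'$ with fewer crossings is exhibited.

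The paper's proof is organised precisely so as to sidestep this failure. It inducts on the number of \emph{non-atomic} dicuts in~$\mathfrak{B}$ (base case: all dicuts atomic, hence nested, handled by Corollary~\ref{cor:atomic-woodall}). In the inductive step one fixes a non-atomic ${B \in \mathfrak{B}}$ represented by~$(X,Y)$ and contracts each side in turn to obtain~$D_1$ and~$D_2$ with ${E(D_1) \cap E(D_2) = B}$; the dijoin families for~$D_1$ and for~$D_2$ are then glued along~$B$ by matching, for each ${e \in B}$, the unique dijoin of~$D_1$ and the unique dijoin of~$D_2$ containing~$e$. The point is that this makes~$B$ colourful \emph{by construction}, with each colour appearing on~$B$ exactly once; the corner argument is then only used to propagate colours from the pair $\{B, B \wedge B'\}$ (resp.\ $\{B, B \vee B'\}$) to~$B'$, which does work: a colour appearing on ${B \cap (B \vee B') \setminus B'}$ cannot reappear on ${B \cap (B \wedge B')}$, so colourfulness of ${B \wedge B'}$ forces it onto ${B' \cap (B \wedge B') \setminus B}$, and symmetrically for the other part. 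Your argument lacks any analogue of this anchor: in your induction neither~$B$ nor~$B'$ is guaranteed to be met by the dijoins for~$\mathfrak{B}'$ at all. If you want to salvage a proof along your lines, you would have to build into the inductive step some device that certifies one of the two crossing dicuts colourful with multiplicity exactly one before invoking the corners; the paper's contraction-and-gluing construction is one such device, and at that point you have essentially reproduced its proof.
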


\begin{proof}
    The capacitated version of this theorem follows from the non-capacitated version by the observations of both Section~\ref{sec:capacity} and above. 
    We prove the non-capacitated version by induction on the number of non-atomic dicuts in~$\mathfrak{B}$. 
    If~$\mathfrak{B}$ contains only atomic dicuts, then the statement follows from Corollary~\ref{cor:atomic-woodall}. 
    
    Otherwise, let~${B \in \mathfrak{B}}$ be non-atomic represented by~$(X,Y)$ with~${\insh_D(X,Y) = Y}$. 
    Consider the digraph~$D_1$ obtained by identifying all vertices 
    in~$Y$ to a single vertex (and deleting loops, afterwards) 
    with~$\mathfrak{B}_1$ being the class of dicuts in~$\mathfrak{B}$ that are dicuts of~$D_1$, 
    as well as the digraph~$D_2$ obtained by identifying all vertices 
    in~${X}$ to a single vertex (and deleting loops, afterwards) 
    with~$\mathfrak{B}_2$ being the class of dicuts in~$\mathfrak{B}$ that are dicuts of~$D_2$. 
    By construction,~${E(D_1) \cap E(D_2) = B}$. 
    Note that in both~$\mathfrak{B}_1$ and~$\mathfrak{B}_2$ the number of non-atomic dicuts strictly decreased since~$B$ is atomic in both~$D_1$ and~$D_2$ and each non-atomic dicut of~$D_1$ or~$D_2$ is non-atomic in~$D$ as well. 
    By induction, for~${j \in \{1, 2\}}$ there are sets~${\{ F_i^j \setsep i \in [m] \}}$ of disjoint $\mathfrak{B}_j${\nbd}dijoins of~$D_j$. 
    Note that since~${\lvert B \rvert = m}$, for each~${e \in B}$ there is a unique~${i_e \in [m]}$ and a unique~${j_e \in [m]}$ such that~${e \in F_{i_e}^1 \cap F_{j_e}^2}$. 
    We claim that~${\{ F_{i_e}^1 \cup F_{j_e}^2 \setsep e \in B \}}$ is a set of disjoint $\mathfrak{B}${\nbd}dijoins. 
    As in Remark~\ref{rem:colouring}, we consider these edges sets as partial colourings of~${E(D)}$ with colours~$B$. 
    
    The fact that the sets are pairwise disjoint follows from the observation that they are the union of disjoint dijoins of~$D_1$ and~$D_2$ respectively and that~${E(D_1) \cap E(D_2) = B}$. 
    
    Note that any bipartition~$(X',Y')$ of~${V(D)}$ which is nested with~${(X,Y)}$ naturally defines a bipartition of~${V(D_1)}$ and~${V(D_2)}$, one of which is trivial and the other representing a cut of the respective digraph which equals~${E_D(X',Y')}$. 
    Let~${B' \in \mathfrak{B}}$ be represented by~${(X',Y')}$ with~${\insh_D(X',Y') = Y'}$ and assume~${B' \notin \mathfrak{B}_1 \cup \mathfrak{B}_2}$. 
    Consider the corners~${B \wedge B'}$ and~${B \vee B'}$ represented by~${(X \cup X', Y \cap Y')}$ and~${(X \cap X', Y \cup Y')}$, respectively. 
    Since~${B' \notin \mathfrak{B}_1 \cup \mathfrak{B}_2}$, both of the corners are non-empty, 
    and since~$\mathfrak{B}$ is corner-closed, they are in~$\mathfrak{B}$ as well. 
    Furthermore, since both~${(X \cup X', Y \cap Y)}$ and~${(X \cap X', Y \cup Y')}$ are nested with~${(X,Y)}$, we obtain that~${B \vee B' \in \mathfrak{B}_1}$ and~${B \wedge B' \in \mathfrak{B}_2}$. 
    Note that since both~${B \wedge B'}$ and~$B$ are colourful by assumption, 
    every colour that appears on~${B \cap E(D[X'])}$ 
    also appears on~${B' \cap E(D[Y'])}$, 
    and since~${B \vee B'}$ and~$B$ are colourful, 
    every colour that appears on~${B \cap E(D[Y'])}$ 
    also appears on~${B' \cap E(D[X])}$.
    Together with the colours appearing in~${B \cap B'}$ we deduce that~$B'$ is colourful as well, as desired. 
\end{proof}

\begin{lemma}
    \label{lem:mini-corner-closed}
    Let~$D$ be a digraph (with capacity~$c$) that contains a dicut of finite size (capacity). 
    The class~$\mathfrak{B}_\textnormal{min}$ of dicuts of~$D$ of minimum size (capacity) is corner-closed. 
\end{lemma}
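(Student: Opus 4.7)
The plan is to reduce the lemma to the classical submodular-type identity for cuts in a digraph. Given crossing dicuts~$B, B' \in \mathfrak{B}_\textnormal{min}$ with representations~$(X,Y)$ and~$(X',Y')$ satisfying $\insh_D(X,Y) = Y$ and $\insh_D(X',Y') = Y'$, I would partition~$V(D)$ into the four blocks $P_1 := X \cap X'$, $P_2 := X \cap Y'$, $P_3 := Y \cap X'$, and $P_4 := Y \cap Y'$, and write~$E_{ij}$ for the set of edges of~$D$ with tail in~$P_i$ and head in~$P_j$.

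The first step is to observe that the two dicut assumptions wipe out almost all reverse edges between different blocks. Because $B$ is a dicut, no edge runs from~$Y$ to~$X$, so $E_{31} = E_{32} = E_{41} = E_{42} = \emptyset$; because $B'$ is a dicut, no edge runs from~$Y'$ to~$X'$, so $E_{21} = E_{23} = E_{41} = E_{43} = \emptyset$. Reading off what remains gives
\begin{align*}
    B &= E_{13} \dcup E_{14} \dcup E_{24}, & B \wedge B' &= E_{14} \dcup E_{24} \dcup E_{34}, \\
    B' &= E_{12} \dcup E_{14} \dcup E_{34}, & B \vee B' &= E_{12} \dcup E_{13} \dcup E_{14}.
\end{align*}
In particular $B \cup B' = (B \wedge B') \cup (B \vee B')$ and $B \cap B' = (B \wedge B') \cap (B \vee B') = E_{14}$, so summing sizes (or capacities, in the capacitated variant) yields the submodular identity $\abs{B} + \abs{B'} = \abs{B \wedge B'} + \abs{B \vee B'}$.

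To finish, recall from the paper that $B \wedge B'$ and $B \vee B'$ are non-empty dicuts. Setting $m := \abs{B} = \abs{B'}$, the minimality of $m$ in $\mathfrak{B}_\textnormal{min}$ forces $\abs{B \wedge B'} \geq m$ and $\abs{B \vee B'} \geq m$, so the above identity pins both to exactly~$m$; the capacitated case is verbatim with~$c$ in place of~$\abs{\cdot}$. I do not expect any genuine obstacle here — the proof is essentially bookkeeping of which of the directed edge types between the four blocks survive the two dicut assumptions, after which the set-theoretic equality and the minimality of~$m$ do all of the work.
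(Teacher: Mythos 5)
Your proof is correct and follows essentially the same route as the paper: the paper simply cites "a simple double counting argument" for the identity $c(B)+c(B') = c(B\wedge B')+c(B\vee B')$ and then invokes minimality exactly as you do, while your four-block bookkeeping is just that double count written out in full. No gaps.
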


\begin{proof}
    Let~${B_1, B_2 \in \mathfrak{B}_\textnormal{min}}$ be crossing. 
    Hence neither~${B_1 \wedge B_2}$ nor~${B_1 \vee B_2}$ are empty. 
    A simple double counting argument yields~${c(B_1) + c(B_2) = c(B_1 \wedge B_2) + c(B_1 \vee B_2)}$. 
    Therefore both~${B_1 \wedge B_2}$ and~${B_1 \vee B_2}$ are of capacity~${c(B_1)}$, since neither of them can be smaller. 
    Hence they are dicuts of minimum capacity and hence in~$\mathfrak{B}_\textnormal{min}$. 
\end{proof}

Now we can deduce the version of Theorem~\ref{thm:mini-woodall} for finite digraphs as a direct corollary of Theorem~\ref{thm:m-woddall} and Lemma~\ref{lem:mini-corner-closed}. 

\begin{theorem}
    \label{thm:mini-woodall-finite}
    Every finite 
    digraph~$D$ 
    (with finitary capacity~${c}$) 
    is $\mathfrak{B}_\textnormal{min}${\nbd}woodall 
    (with respect to~$c$), where~$\mathfrak{B}_\textnormal{min}$ denotes the class of dicuts of~$D$ of minimum size (capacity). 
    \qed
\end{theorem}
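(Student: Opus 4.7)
The plan is to derive this theorem as a direct corollary of Theorem~\ref{thm:m-woddall} together with Lemma~\ref{lem:mini-corner-closed}, so essentially no new work is needed; we only need to verify that the hypotheses of Theorem~\ref{thm:m-woddall} apply to~$\mathfrak{B}_\textnormal{min}$.

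First, I would fix~$m$ to be the minimum size (respectively finitary capacity) of a dicut of~$D$. Since~$D$ is finite and contains at least one dicut of finite size (respectively finite capacity, guaranteed by the finitariness of~$c$), this minimum is attained and is a positive integer. By definition of~$\mathfrak{B}_\textnormal{min}$, every dicut in this class has size (capacity) exactly~$m$, so the ``uniform size~$m$'' hypothesis of Theorem~\ref{thm:m-woddall} is satisfied.

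The remaining condition to check is that~$\mathfrak{B}_\textnormal{min}$ is corner-closed, but this is precisely the content of Lemma~\ref{lem:mini-corner-closed}. Applying Theorem~\ref{thm:m-woddall} to~$\mathfrak{B} := \mathfrak{B}_\textnormal{min}$ then yields that~$D$ is $\mathfrak{B}_\textnormal{min}${\nbd}woodall (with respect to~$c$), which is exactly the conclusion of the theorem. The only step that could conceivably cause trouble would be an off-by-one edge case if~$D$ had no dicut at all, but this is excluded by the hypothesis that~$D$ contains a dicut of finite size (capacity), so there is no genuine obstacle to overcome here.
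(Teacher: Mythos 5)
Your proposal matches the paper's own proof exactly: the paper derives this theorem as an immediate corollary of Theorem~\ref{thm:m-woddall} and Lemma~\ref{lem:mini-corner-closed}, with~$m$ taken to be the minimum size (capacity) of a dicut. No issues; the approach and the verification of the hypotheses are the same as in the paper.
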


\section{Dijoins in infinite digraphs}
\label{sec:infinite}

First, we should ask what the `right' generalisation of Conjecture~\ref{conj:woodall} for infinite digraphs should be. 
Often a more structural generalisation of such min-max theorems yields a more meaningful result than a simple generalisation using cardinalities. 
For example, Erd\H{o}s conjectured and Aharoni and Berger~\cite{inf-menger} proved a structural generalisation of Menger's theorem, where in every graph we can simultaneously find a set~$\mathcal{P}$ of disjoint paths between two sets~$A$ and~$B$ of vertices and a set~$S$ of vertices separating~$A$ and~$B$ such that each path in~$\mathcal{P}$ contains precisely one vertex in~$S$ and~$S$ contains no vertices not included in some path in~$\mathcal{P}$. 

However, such an Erd\H{o}s-Menger-like structural generalisation of Conjecture~\ref{conj:woodall} fails in infinite digraphs, as we will illustrate in the following example. 

\begin{example} 
    \label{ex:no-erdos-menger-woodall}
    We give an example of an infinite digraph with no pair a dicut~$B$ and a set of disjoint dijoins~${\{ F_e \setsep e \in B\}}$ such that~${B \cap F_e = \{ e \}}$ for all~${e \in B}$.

    Consider the digraph~$D$ with vertex set~${V(D) := \mathbb{Z} \times \{ 1, -1 \}}$ and edge set 
    \[
        E(D) := \{ (z, i)(z+i,i) \setsep z \in \mathbb{Z}, i \in \{ 1, -1 \} \} \cup \{ (z, 1)(z, -1) \setsep z \in \mathbb{Z} \}, 
    \]
    as depicted in Figure~\ref{fig:counterexample}. 
    We call the edges~${(z,1)(z,-1)}$ of the second type the \emph{rungs} of~$D$.
    
    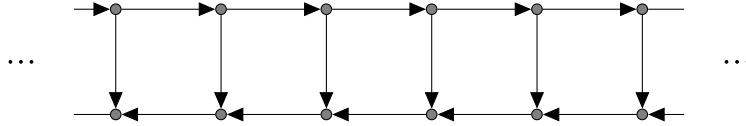
\begin{figure}[htbp]
        \centering
        
        \begin{tikzpicture}
            [scale=1.4]
            \tikzset{vertex/.style = {circle, draw, fill=black!50, inner sep=0pt, minimum width=4pt}}
            \tikzset{dot/.style = {circle, draw, fill=black, inner sep=0pt, minimum width=1pt}}
            \tikzset{arrow/.style = {-triangle 45}}
            
            \pgfmathtruncatemacro{\n}{6} % Determines the length of the ladder
            
            \node (v0) at (0.5,0) {};
            \node (v\the\numexpr \n+1 \relax) at (\n+0.5,0) {};
            \node (w0) at (0.5,1) {};
            \node (w\the\numexpr \n+1 \relax) at (\n+0.5,1) {};
            \foreach \x in {1,...,\n}{
                \node [vertex] (v\x) at (\x,0) {};
                \node [vertex] (w\x) at (\x,1) {};
            };
            \foreach \x [evaluate={\y = int(\x-1);}, evaluate={\z = int(\x+1);}] in {1,...,\n}{
                \draw [arrow] (w\y) edge (w\x) {};
                \draw [arrow] (v\z) edge (v\x) {};
                \draw [arrow] (w\x) edge (v\x) {};
            };
            \draw (v1) edge (v0) {};
            \draw (w\n) edge (w\the\numexpr \n+1 \relax) {};
            \foreach \x in {0,...,2} {
                \node [dot] at (0.1*\x,0.5) {};
                \node [dot] at (\n+0.8+0.1*\x,0.5) {};
            };
        \end{tikzpicture}
        
        \caption{A counterexample to an Erd\H{o}s-Menger-like structural generalisation of Conjecture~\ref{conj:woodall} to infinite digraphs.}
        \label{fig:counterexample}
    \end{figure}
    
    Note that~$D$ has no finite dicut and for each infinite dicut~$B$ there is an~${n \in \mathbb{Z}}$ such that~$B$ contains all rungs~${\{ (z, 1) (z, -1) \setsep z \leq n \}}$ up to~$n$. 
    Moreover, whenever~$F_1$ and~$F_2$ are two disjoint dijoins, then at least one of them contains infinitely many rungs of~${\{ (z, 1) (z, -1) \setsep z \leq n \}}$ up to any~${n \in \mathbb{Z}}$. 
    Hence every dicut meets 
    such a dijoin infinitely often. 
    Moreover, note that in this example, the set of dicuts is nested. 
\end{example}

In the light of this example, we can only hope for weaker generalisations to be possible. 

In Subsection~\ref{subsec:compactness}, we will consider Question~\ref{quest:main} for classes of dicuts of finite size (capacity), where the structural generalisation as hinted above is still equivalent to a comparison of cardinals. 
In this setting, we can extend the respective results from the finite case using the compactness principle (or more precisely, Lemma~\ref{lem:compatness-hypergraphs}). 
Such an approach will not work for classes of dicuts that contain dicuts of both finite and infinite size (capacity). 

In Subsection~\ref{subsec:infinite-dibonds}, we will consider Question~\ref{quest:main} for classes of infinite dibonds, where we will proof a cardinality-version of this question for any class of infinite dibonds. 
We will also show that even a cardinality-version for classes of infinite dicuts (and hence a capacitated version, cf.~Question~\ref{quest:capacitated}) can fail.

\subsection{Classes of finite dicuts in infinite digraphs}
\label{subsec:compactness}
\

Let~$D$ be a digraph. 
Given a set~$\mathcal{B}$ of dicuts of~$D$, we define an equivalence relation on~$V(D)$ 
by setting~${v \equiv_\mathcal{B} w}$ if and only if 
we cannot separate~$v$ from~$w$ by a dicut in~$\mathcal{B}$. 

It is easy to check that~$\equiv_\mathcal{B}$ indeed defines an equivalence relation. 
Let~${D{/}{\equiv_\mathcal{B}}}$ denote the digraph 
which is obtained from~$D$ by identifying the vertices in the same equivalence class of~$\equiv_\mathcal{B}$ and deleting loops. 
Note that~${D{/}{\equiv_\mathcal{B}}}$ does not contain any directed cycles. 
Given a capacity~$c$ of~$D$, we call the restriction of~$c$ to~${E(D{/}{\equiv_\mathcal{B}})}$ the \emph{capacity of~$D{/}{\equiv_\mathcal{B}}$ induced by~$c$}. 
We shall use the following observation from~\cite{inf-LY:1} about this digraph. 

\begin{proposition}
    \cite{inf-LY:1}*{Proposition~2.9(ii)}
    \label{prop:quotient}
    Let~$D$ be a digraph 
    and let~$\mathcal{B}$ be a set of dicuts of~$D$. 
    Then every dicut (or dibond, respectively) in~$\mathcal{B}$ of~$D$ is also a dicut (or dibond, respectively) of~${D{/}{\equiv_\mathcal{B}}}$. 
\end{proposition}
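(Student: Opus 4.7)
The plan is to show that the partition representing any dicut $B \in \mathcal{B}$ descends cleanly to the quotient $D/{\equiv_\mathcal{B}}$, and then derive minimality in the dibond case by lifting back.

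First I would fix $B \in \mathcal{B}$ with representation $(X,Y)$ and observe that no equivalence class of $\equiv_\mathcal{B}$ meets both $X$ and $Y$: indeed, if $v \in X$ and $w \in Y$, then $B$ itself separates $v$ from $w$ and $B$ lies in $\mathcal{B}$, so $v \not\equiv_\mathcal{B} w$. Consequently the partition $(X,Y)$ descends to a genuine bipartition $(X',Y')$ of $V(D/{\equiv_\mathcal{B}})$, where $X'$ (respectively $Y'$) is the set of equivalence classes contained in $X$ (respectively $Y$). Both sides of this bipartition are non-empty since $(X,Y)$ was non-trivial.

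Next I would verify that $B$ itself, viewed as a subset of $E(D/{\equiv_\mathcal{B}})$, is the dicut of the quotient represented by $(X',Y')$. Every edge $e \in B$ has its tail in one of $X,Y$ and its head in the other, so its endpoints live in different equivalence classes; hence $e$ is neither identified with a parallel edge in a problematic way nor turned into a loop, and it survives in the quotient. Conversely, every edge of $D$ (equivalently, of $D/{\equiv_\mathcal{B}}$) that runs between $X'$ and $Y'$ lifts to an edge between $X$ and $Y$ in $D$, and so lies in $B$. Because $B$ is a dicut in $D$, all these edges point from the same side to the other, so $B$ is a dicut of $D/{\equiv_\mathcal{B}}$ represented by $(X',Y')$.

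For the dibond statement, I would argue by contradiction: suppose $B$ is a dibond of $D$ but not a dibond of $D/{\equiv_\mathcal{B}}$. Since $B$ is a dicut of the quotient, it would then strictly contain some non-empty dicut $B^*$ of $D/{\equiv_\mathcal{B}}$, say represented by $(X'',Y'')$. Lifting $(X'',Y'')$ to a bipartition $(X^\uparrow, Y^\uparrow)$ of $V(D)$ by taking unions of equivalence classes gives a non-trivial bipartition of $V(D)$ whose cut edges in $D$ are exactly the edges of $B^*$; since $B^*$ is a dicut of the quotient, $B^*$ is then a non-empty dicut of $D$ strictly contained in $B$, contradicting minimality of $B$ in $D$.

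The only subtle point is the careful handling of parallel edges and the possibility of edges collapsing to loops when forming the quotient; the key is the first paragraph's observation that no equivalence class crosses $(X,Y)$, which ensures that no edge of $B$ is contracted and that the correspondence between cut edges in $D$ and in $D/{\equiv_\mathcal{B}}$ is bijective on edges spanning $(X^\uparrow, Y^\uparrow)$. Once that is in place, both claims are essentially bookkeeping.
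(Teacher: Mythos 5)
Your proof is correct: the key observation that no $\equiv_\mathcal{B}$-class meets both sides of a representation of $B$ (because $B\in\mathcal{B}$ itself separates them) makes the bipartition descend to the quotient with exactly the same edge set, and the lifting argument for minimality is sound since any dicut of $D/{\equiv_\mathcal{B}}$ pulls back to a dicut of $D$ with the identical edge set. The paper only cites this result from the earlier Gollin--Heuer paper without reproducing a proof, and your argument is the natural one that proof takes.
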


The main tool we use to extend results about Question~\ref{quest:main} from classes of finite graphs to finitary infinite versions is the following compactness-type lemma. 

\begin{lemma}
    \label{lem:compactness}
    Let~$D$ be a digraph (with capacity~$c$) 
    and let~$\mathfrak{B}$ be a class of dicuts of~$D$ of finite size (capacity). 
    Suppose that for every finite set~${\mathcal{B} \subseteq \mathfrak{B}}$ 
    there is a finite set~${\overline{\mathcal{B}} \subseteq \mathfrak{B}}$ containing~$\mathcal{B}$ such that the digraph~${D{/}\equiv_{\overline{\mathcal{B}}}}$ is $\overline{\mathcal{B}}${\nbd}woodall (with respect to the capacity induced by~$c$). 
    Then~$D$ is $\mathfrak{B}${\nbd}woodall (with respect to~$c$). 
\end{lemma}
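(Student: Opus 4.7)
The plan is to use the standard compactness principle in combinatorics (cf.\ \cite{diestel}*{Appendix~A}), in the same spirit as the proof of Lemma~\ref{lem:compatness-hypergraphs}. Set $k := \min_{B \in \mathfrak{B}} |B|$, which is a finite nonnegative integer since every $B \in \mathfrak{B}$ is finite, and fix some $B_0 \in \mathfrak{B}$ of size~$k$. The upper bound is immediate: any pairwise disjoint family of $\mathfrak{B}$-dijoins selects distinct edges from $B_0$ and so has at most~$k$ members. For the lower bound I would encode a family of $k$ pairwise disjoint $\mathfrak{B}$-dijoins as a map $f \colon E(D) \to \{0, 1, \ldots, k\}$, where $f(e) = i > 0$ means ``$e$ lies in the $i$th dijoin'' and $f(e) = 0$ means ``$e$ lies in none of them''. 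Such an $f$ yields the desired family if and only if every $B \in \mathfrak{B}$ realises every colour in $[k]$. Because every $B \in \mathfrak{B}$ is finite, each such constraint depends on only finitely many coordinates, so the compactness principle reduces the task to producing, for every finite $\mathcal{B} \subseteq \mathfrak{B}$, a map $f$ under which every $B \in \mathcal{B}$ realises every colour in~$[k]$.

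To build such an $f$ for a given finite $\mathcal{B}$, I would invoke the hypothesis to obtain a finite $\overline{\mathcal{B}} \supseteq \mathcal{B}$ for which $D{/}{\equiv_{\overline{\mathcal{B}}}}$ is $\overline{\mathcal{B}}$-woodall. Since $\overline{\mathcal{B}} \subseteq \mathfrak{B}$, we have $\min_{B \in \overline{\mathcal{B}}} |B| \geq k$, and so $D{/}{\equiv_{\overline{\mathcal{B}}}}$ carries at least $k$ pairwise disjoint $\overline{\mathcal{B}}$-dijoins $\hat{F}_1, \ldots, \hat{F}_k$. By Proposition~\ref{prop:quotient}, each $B \in \overline{\mathcal{B}}$ remains a dicut of the quotient with the very same edge set, so each $\hat{F}_i$ meets every $B \in \overline{\mathcal{B}} \supseteq \mathcal{B}$. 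Since $E(D{/}{\equiv_{\overline{\mathcal{B}}}})$ is naturally a subset of $E(D)$ (the quotient only identifies vertices and discards loops), defining $f(e) := i$ whenever $e \in \hat{F}_i$ and $f(e) := 0$ otherwise yields the required partial colouring, and compactness then assembles a global $f$ and hence $k$ pairwise disjoint $\mathfrak{B}$-dijoins of~$D$.

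For the capacitated statement, I would pass to the digraph $\hat{D}$ of Construction~\ref{construction:capacity2dicuts}. The hypothesis of the lemma transfers from $(D, c)$ to $\hat{D}$ because forming the quotient commutes with replacing edges by parallel copies; one then applies the non-capacitated case just proved and transfers the conclusion back via Proposition~\ref{prop:capacity}. The main subtlety throughout is the passage from the quotient back to $D$, where one must rely on Proposition~\ref{prop:quotient} to know that the dicuts in $\overline{\mathcal{B}}$ survive as dicuts of $D{/}{\equiv_{\overline{\mathcal{B}}}}$ with unchanged edge set, so that the $\overline{\mathcal{B}}$-dijoins supplied by the hypothesis are honest edge subsets of $E(D)$ that meet every dicut in $\mathcal{B}$.
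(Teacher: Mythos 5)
Your proof is correct and follows essentially the same route as the paper's: both reduce to the finite quotients supplied by the hypothesis (using Proposition~\ref{prop:quotient} to view the quotient's $\overline{\mathcal{B}}$\nbd dijoins as honest edge sets of~$D$) and then assemble the global dijoins by compactness, with Construction~\ref{construction:capacity2dicuts} and Proposition~\ref{prop:capacity} handling the capacitated case. The only cosmetic difference is that you apply the compactness principle directly to a colouring of~$E(D)$, whereas the paper packages the identical argument as finding disjoint transversals in the dicut hypergraph of finite character via Lemma~\ref{lem:compatness-hypergraphs}.
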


\begin{proof}
    Let~$k$ denote the capacity of a $c${\nbd}cheapest dicut in~$\mathfrak{B}$ and let~${E' \subseteq E(D)}$ denote the set of all edges of~$D$ of positive capacity. 
    Consider 
    the $\hat{\mathfrak{B}}${\nbd}dicut hypergraph~${\mathcal{H} := \mathcal{H}(\hat{D},\hat{\mathfrak{B})}}$ for~$\hat{D}$ and~$\hat{\mathfrak{B}}$ as in Construction~\ref{construction:capacity2dicuts}. 
    It is easy to observe that~${\mathcal{H}}$ has finite character. 
    
    Moreover, note that for a finite set~${\mathcal{B} \subseteq \mathfrak{B}}$ containing a dicut of capacity~$k$ and for a~$\overline{\mathcal{B}}$ as in the assumption, 
    the hypergraph~${\mathcal{H}_\mathcal{B} := \mathcal{H}(\hat{D}{/}\equiv_{\widehat{\overline{\mathcal{B}}}}, \widehat{\overline{\mathcal{B}}})}$ has by Proposition~\ref{prop:capacity} and the assumption~$k$ disjoint transversals. 
    By construction~$\mathcal{H}_\mathcal{B}$ is a restriction of~$\mathcal{H}$ to a finite set. 
    Moreover, for each restriction of~$\mathcal{H}$ to a finite set~${Y \subseteq E(\hat{D})}$ there is a finite set~${\overline{Y} \subseteq E(\hat{D})}$ containing~$Y$ such that~${\mathcal{H} \restricted \overline{Y} = \mathcal{H}_{\mathcal{B}}}$ for some finite~${\mathcal{B} \subseteq \mathfrak{B}}$. 
    Hence the result follows from Lemma~\ref{lem:compatness-hypergraphs} and again Proposition~\ref{prop:capacity}. 
\end{proof}

\medskip

Lemma~\ref{lem:compactness} together with Theorem~\ref{thm:m-woddall} yield the following corollary.

\begin{corollary}
    \label{cor:m-woodall-infinite}
    Let~$D$ be a 
    digraph (with capacity~$c$), 
    let~$m$ be a positive integer 
    and let~$\mathfrak{B}$ denote a corner-closed class of dicuts of~$D$ all of 
    size~${m}$ (capacity~$m$). 
    Then~$D$ is $\mathfrak{B}${\nbd}woodall (with respect to~$c$). 
    \qed
\end{corollary}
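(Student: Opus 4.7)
The strategy is to verify the hypothesis of Lemma~\ref{lem:compactness} via Theorem~\ref{thm:m-woddall}. For the capacitated version, Construction~\ref{construction:capacity2dicuts} and Proposition~\ref{prop:capacity} let us pass to~$\hat{D}$ and~$\hat{\mathfrak{B}}$, which form a corner-closed class of dicuts of size~$m$ in an uncapacitated digraph (corner-closedness is inherited by~$\hat{\mathfrak{B}}$ as noted in Section~\ref{sec:mini-woodall}), so it suffices to treat the non-capacitated case.

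Fix a finite~${\mathcal{B} \subseteq \mathfrak{B}}$. The key observation is that for crossing~${B, B' \in \mathfrak{B}}$ represented by~${(X,Y)}$ and~${(X',Y')}$ with~${Y = \insh_D(X,Y)}$ and~${Y' = \insh_D(X',Y')}$, every edge of~${B \wedge B' = E_D(X \cup X', Y \cap Y')}$ has its head in~${Y \cap Y'}$ and its tail in either~$X$ or~$X'$, hence belongs to~${B \cup B'}$; the analogous statement holds for~${B \vee B'}$. Thus iteratively closing~$\mathcal{B}$ under corners — which remain in~$\mathfrak{B}$ by corner-closedness — yields a class~${\overline{\mathcal{B}} \subseteq \mathfrak{B}}$ whose dicuts only use edges of the finite set~${\bigcup \mathcal{B}}$. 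Since every dicut in~$\mathfrak{B}$ has exactly~$m$ edges, the resulting~$\overline{\mathcal{B}}$ is finite; by construction it is corner-closed and contains~$\mathcal{B}$.

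It remains to show that~${D/{\equiv_{\overline{\mathcal{B}}}}}$ is~$\overline{\mathcal{B}}${\nbd}woodall. This quotient has at most~$2^{\abs{\overline{\mathcal{B}}}}$ vertices but may have infinitely many edges, so Theorem~\ref{thm:m-woddall} does not apply directly. The plan is to consider the sub-digraph~$D'$ of~${D/{\equiv_{\overline{\mathcal{B}}}}}$ on the same vertex set with edge set~${\bigcup \overline{\mathcal{B}}}$, which is finite. By Proposition~\ref{prop:quotient}, each~${B \in \overline{\mathcal{B}}}$ remains a dicut of~${D/{\equiv_{\overline{\mathcal{B}}}}}$; since~$B$ coincides with the entire cut across its shores in the quotient and is contained in~${E(D')}$, it is also a dicut of~$D'$. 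Applying Theorem~\ref{thm:m-woddall} to the finite digraph~$D'$ with class~$\overline{\mathcal{B}}$ produces~$m$ disjoint~$\overline{\mathcal{B}}${\nbd}dijoins of~$D'$, which are also~$\overline{\mathcal{B}}${\nbd}dijoins of~${D/{\equiv_{\overline{\mathcal{B}}}}}$, as required.

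The main subtlety is the possibly infinite edge set of~${D/{\equiv_{\overline{\mathcal{B}}}}}$, which forces the descent to the finite sub-digraph~$D'$; this descent works because the cuts in~$\overline{\mathcal{B}}$ capture the full cut between their shores in the quotient, so restricting to~${\bigcup\overline{\mathcal{B}}}$ preserves exactly the relevant dicut structure. The key ingredient making the closure~$\overline{\mathcal{B}}$ finite is the uniform size bound~$m$ combined with the fact that corner edges lie in~${B \cup B'}$; for the capacitated version, this finiteness would fail in~$D$ itself, which is why the detour through~$\hat{D}$ is essential.
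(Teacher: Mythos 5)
Your proof is correct and follows exactly the route the paper intends (and leaves implicit): verify the hypothesis of Lemma~\ref{lem:compactness} by corner\nbd closing a finite ${\mathcal{B}}$ inside the finite edge set~${\bigcup\mathcal{B}}$ and applying Theorem~\ref{thm:m-woddall}, after reducing the capacitated case via Construction~\ref{construction:capacity2dicuts}. The only (harmless) inaccuracy is your worry that ${D/{\equiv_{\overline{\mathcal{B}}}}}$ may have infinitely many edges: since any two distinct $\equiv_{\overline{\mathcal{B}}}$\nbd classes are separated by some ${B \in \overline{\mathcal{B}}}$, every non-loop edge of the quotient already lies in~${\bigcup\overline{\mathcal{B}}}$, so the quotient coincides with your~$D'$ and is itself finite.
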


Recall that by Lemma~\ref{lem:mini-corner-closed} and the observations in Section~\ref{sec:mini-woodall}, applying Construction~\ref{construction:capacity2dicuts} yields that~$\hat{\mathfrak{B}}_\textnormal{min}$ is corner-closed. 
Hence we deduce the following corollary (and hence Theorem~\ref{thm:mini-woodall}). 

\begin{corollary}
    \label{cor:mini-woodall-infinite}
    Let~$D$ be a 
    digraph (with capacity~$c$) that contains a dicut of finite size (capacity) and let~$\mathfrak{B}_\textnormal{min}$ be the set of dicuts of minimum size (capacity). 
    Then~$D$ is $\mathfrak{B}_\textnormal{min}${\nbd}woodall. 
    \qed
\end{corollary}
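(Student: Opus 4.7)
My plan is to derive this corollary as an immediate instance of Corollary~\ref{cor:m-woodall-infinite} applied to $\mathfrak{B}_\textnormal{min}$ (or $\hat{\mathfrak{B}}_\textnormal{min}$ in the capacitated setting), using only the fact that minimum-size dicuts form a corner-closed class of dicuts of a common finite size. All the heavy lifting has already been done in Sections~\ref{sec:mini-woodall} and~\ref{subsec:compactness}; what remains is essentially to verify that the hypotheses line up.

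First I would treat the non-capacitated statement. Let $m$ denote the size of a smallest dicut of $D$, which is finite by hypothesis. Then by definition every dicut in $\mathfrak{B}_\textnormal{min}$ has size exactly $m$, and by Lemma~\ref{lem:mini-corner-closed} the class $\mathfrak{B}_\textnormal{min}$ is corner-closed. Thus $\mathfrak{B}_\textnormal{min}$ satisfies all the hypotheses of Corollary~\ref{cor:m-woodall-infinite} (with this value of $m$), and the desired conclusion that $D$ is $\mathfrak{B}_\textnormal{min}${\nbd}woodall follows directly.

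For the capacitated version I would pass through the digraph $\hat{D}$ associated to $(D,c)$ by Construction~\ref{construction:capacity2dicuts}. Letting $m$ now denote the (finite) capacity of a $c${\nbd}cheapest dicut in $D$, each $\hat{B}$ for $B \in \mathfrak{B}_\textnormal{min}$ is a dicut of $\hat{D}$ of size exactly $m$, since $c(B) = |\hat{B}|$ by Construction~\ref{construction:capacity2dicuts}. Lemma~\ref{lem:mini-corner-closed} gives that $\mathfrak{B}_\textnormal{min}$ is corner-closed, and the observation made in Section~\ref{sec:mini-woodall} that corner-closedness is preserved under the hat construction (via the identities $\widehat{B \wedge B'} = \hat{B} \wedge \hat{B'}$ and $\widehat{B \vee B'} = \hat{B} \vee \hat{B'}$) ensures that $\hat{\mathfrak{B}}_\textnormal{min}$ is corner-closed as well. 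Applying Corollary~\ref{cor:m-woodall-infinite} to $\hat{D}$ and $\hat{\mathfrak{B}}_\textnormal{min}$ yields that $\hat{D}$ is $\hat{\mathfrak{B}}_\textnormal{min}${\nbd}woodall, and Proposition~\ref{prop:capacity} transports the statement back to give that $D$ is $\mathfrak{B}_\textnormal{min}${\nbd}woodall with respect to $c$.

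Since this is pure bookkeeping over already established results, I do not expect any substantive obstacle; the only thing worth being careful about is the passage through the hat construction in the capacitated case, so that one really does apply the uncapacitated Corollary~\ref{cor:m-woodall-infinite} to the right digraph and the right class, and then invokes Proposition~\ref{prop:capacity} in the correct direction to return to $(D,c)$.
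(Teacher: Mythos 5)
Your proposal is correct and matches the paper's own derivation: the paper likewise obtains this corollary by combining Lemma~\ref{lem:mini-corner-closed} with Corollary~\ref{cor:m-woodall-infinite}, handling the capacitated case via Construction~\ref{construction:capacity2dicuts} and the preservation of corner-closedness under the hat operation. No gaps.
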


Lemma~\ref{lem:compactness} also yields the following corollary.

\begin{corollary}
    \label{cor:inf-finitary-woodall}
    If Conjecture~\ref{conj:woodall} is true for all weakly connected finite digraphs, 
    then every weakly connected digraph~$D$ is $\mathfrak{B}_\textnormal{fin}${\nbd}woodall for the class~$\mathfrak{B}_\textnormal{fin}$ of finite dicuts of~$D$. 
    \qed
\end{corollary}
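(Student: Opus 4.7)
The plan is to apply Lemma~\ref{lem:compactness} to the class~${\mathfrak{B}_\textnormal{fin}}$, so the task reduces to verifying its hypothesis: for every finite subset~${\mathcal{B} \subseteq \mathfrak{B}_\textnormal{fin}}$, I must produce a finite~${\overline{\mathcal{B}} \subseteq \mathfrak{B}_\textnormal{fin}}$ with~${\mathcal{B} \subseteq \overline{\mathcal{B}}}$ such that~${D{/}{\equiv_{\overline{\mathcal{B}}}}}$ is~$\overline{\mathcal{B}}${\nbd}woodall. The assumed truth of Conjecture~\ref{conj:woodall} for finite weakly connected digraphs will be invoked inside this verification step.

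First I would observe that~${D_0 := D{/}{\equiv_\mathcal{B}}}$ is already a finite weakly connected digraph. Any edge of~$D$ whose endpoints lie in the same class of~$\equiv_\mathcal{B}$ becomes a loop in the quotient and is discarded, so~${E(D_0) \subseteq \bigcup_{B \in \mathcal{B}} B}$, which is a finite union of finite sets. Weak connectivity is inherited from~$D$ via the quotient map.

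Next I would enlarge~$\mathcal{B}$ to~$\overline{\mathcal{B}}$ by adjoining every dibond of~$D_0$, each viewed as a dicut of~$D$ via the preimage of its representation under the quotient. Each such added dicut agrees as a set of edges with a dibond of the finite digraph~$D_0$, so it is a finite dicut of~$D$ and hence belongs to~$\mathfrak{B}_\textnormal{fin}$. The sides of any of these additional dicuts are unions of~$\equiv_\mathcal{B}${\nbd}classes, so enlarging~$\mathcal{B}$ in this way does not refine the equivalence relation; consequently~${\equiv_{\overline{\mathcal{B}}}} = {\equiv_\mathcal{B}}$ and~${D{/}{\equiv_{\overline{\mathcal{B}}}}} = D_0$.

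Finally, I would check that~$D_0$ is~$\overline{\mathcal{B}}${\nbd}woodall. Since~$\overline{\mathcal{B}}$ contains every dibond of~$D_0$ and every non-empty dicut contains a dibond, the~$\overline{\mathcal{B}}${\nbd}dijoins of~$D_0$ coincide with its ordinary dijoins and the minimum size of a dicut in~$\overline{\mathcal{B}}$ equals the minimum size of a dibond of~$D_0$. The assumed finite case of Conjecture~\ref{conj:woodall} applied to~$D_0$ then yields the required family of disjoint dijoins, which Lemma~\ref{lem:compactness} lifts to a family of disjoint~$\mathfrak{B}_\textnormal{fin}${\nbd}dijoins of~$D$ of the correct size. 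I do not anticipate a serious obstacle; the main care needed is matching~$\overline{\mathcal{B}}${\nbd}dijoins with ordinary dijoins of~$D_0$ and confirming that the two equivalence relations coincide after the enlargement.
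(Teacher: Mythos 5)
Your proposal is correct and follows exactly the route the paper intends: the corollary is stated as an immediate consequence of Lemma~\ref{lem:compactness}, and your verification of its hypothesis --- passing to the finite weakly connected quotient ${D{/}{\equiv_{\mathcal{B}}}}$, enlarging $\mathcal{B}$ by all dibonds of the quotient so that $\overline{\mathcal{B}}$\nbd dijoins coincide with ordinary dijoins and the two minima agree, and then invoking the assumed finite case of Conjecture~\ref{conj:woodall} --- is precisely the argument the \qed\ leaves to the reader. The enlargement step is the one genuinely non-trivial point, and you have handled it correctly.
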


\medskip

The \emph{triangular prism~\triangularprism} is the undirected graph with vertex set~${V(K_3) \times V(K_2)}$ and edges between~${(v_1,w_1)}$ and~${(v_2,w_2)}$ if and only if both~${v_1 v_2 \in E(K_3)}$ and~${w_1 w_2 \in E(K_2)}$. 
As mentioned in the introduction, the capacitated version Conjecture~\ref{conj:woodall} has been verified for planar digraphs with no minor isomorphic to the triangular prism~$\triangularprism$ by Lee and Williams. 

\begin{theorem}
    \cite{LW:no_K^5-e}
    \label{thm:finite_K_5-3}
    Every finite weakly connected digraph~$D$ 
    (with finitary capacity~$c$) 
    whose underlying multigraph is planar and contains no minor isomorphic to the triangular prism~$\triangularprism$ 
    is $\mathfrak{B}${\nbd}woodall 
    (with respect to~$c$) 
    for the class~$\mathfrak{B}$ of all dicuts of~$D$. 
\end{theorem}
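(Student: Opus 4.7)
The plan is to pass to the planar dual and then prove the resulting dual statement by structural induction on $K_{5}-e$-minor-free graphs. Fix a planar embedding of the underlying multigraph of $D$, and let $D^{*}$ be its planar dual, orienting each dual edge $e^{*}$ relative to $e$ by a fixed rotation convention. Under this duality, dicuts of $D$ correspond to directed cycles of $D^{*}$, dijoins of $D$ correspond to feedback arc sets of $D^{*}$, and the capacity $c$ transfers unchanged. Since the planar dual of the triangular prism is $K_{5}-e$, a planar graph is $\triangularprism$-minor-free if and only if its planar dual is $K_{5}-e$-minor-free. The theorem is therefore equivalent to the assertion that in every finite digraph whose underlying multigraph is planar and $K_{5}-e$-minor-free, equipped with a finitary capacity, the minimum $c$-capacity of a directed cycle equals the maximum size of a $c$-disjoint packing of feedback arc sets.

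Next I would induct on $|E(D^{*})|$, using the decomposition theorem for graphs without a $K_{5}-e$ minor, by which every such graph is built via $2$-sums from a short list of $3$-connected building blocks (most of which are themselves planar and of bounded complexity). In the inductive step I would split $D^{*}$ across a $2$-separation $\{u,v\}$ into two smaller subdigraphs, each obtained by replacing the other side with a virtual directed arc between $u$ and $v$. The inductive hypothesis yields a $c$-disjoint packing of feedback arc sets on each side, and these two packings must be stitched into a single global packing of the same size. For the base cases --- the finitely many $3$-connected building blocks of bounded size --- one verifies the claim by direct inspection, or by observing that their directed-cycle systems are nested enough for Corollary~\ref{cor:nested-woodall-cap} (re-dualised) to apply.

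The main obstacle is precisely the gluing step across a $2$-separation. The two inductive packings induce colourings of the edges on the two sides, but these colourings typically disagree on how they interact with directed paths traversing the separator $\{u,v\}$. One must show that the colours on one side can be relabelled so that the combined colour classes still hit every directed cycle in $D^{*}$, including those crossing $\{u,v\}$. This comes down to an exchange/parity argument on the flow of colour classes through the separator, together with a careful accounting to ensure that no colour is lost in the merge. Making this exchange argument go through uniformly for every configuration arising in the decomposition is the technical heart of the Lee--Williams proof; $K_{5}-e$-minor-freeness is exactly what rules out the kind of $3$-connected local configurations (essentially those arising in Schrijver's counterexample to Edmonds--Giles) that would otherwise obstruct it.
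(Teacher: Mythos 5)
You should first be aware that the paper contains no proof of Theorem~\ref{thm:finite_K_5-3}: it is imported as a black box from Lee and Williams~\cite{LW:no_K^5-e}, and its only role in the paper is as input to the compactness argument of Corollary~\ref{cor:infinite_K_5-3}. So your attempt has to stand as a self-contained proof of the Lee--Williams theorem, and as such it has genuine gaps. The dualization step is fine: dicuts of a planar digraph correspond to directed cycles of the directed planar dual, dijoins to feedback arc sets, and $\triangularprism$-minor-freeness of the primal to $(K_5-e)$-minor-freeness of the dual, so the statement is indeed equivalent to packing $k$ $c$-disjoint dicycle covers where $k$ is the minimum $c$-capacity of a dicycle. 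Everything after that, however, is a plan rather than a proof, and two of its load-bearing claims are not merely unproved but problematic.

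First, the base cases. There are infinitely many $3$-connected planar $(K_5-e)$-minor-free graphs --- all wheels, for instance --- so they cannot be dispatched as ``finitely many building blocks of bounded size'' checked by inspection. Nor does the fallback via Corollary~\ref{cor:nested-woodall-cap} work: nestedness of the dicycle family in the dual amounts to nestedness of the dicut family in the primal piece, and already an acyclically oriented $4$-wheel (hub $h$ dominating a rim $v_1v_2v_3v_4$ with $v_1 \to v_2 \to v_3$ and $v_1 \to v_4 \to v_3$) has the crossing dicuts with in-shores $\{v_2,v_3\}$ and $\{v_3,v_4\}$, since all four corner intersections of these bipartitions are non-empty. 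Second, and more importantly, the gluing of the two inductive packings across a $2$-separation --- which you yourself identify as ``the technical heart'' --- is exactly where the theorem lives, and no argument is supplied: you do not say what the inductive hypothesis asserts about the virtual arc (which colours it carries, with what capacity, and why the side-plus-virtual-arc digraphs remain planar and $(K_5-e)$-minor-free with the correct minimum dicycle capacity), nor how a dicycle traversing both sides of the separator is guaranteed to receive every colour after relabelling. An unspecified ``exchange/parity argument'' cannot be checked, and the hypothesis class for the induction is not even stated. As written, the proposal reduces the theorem to the unproven core of the Lee--Williams argument, i.e.\ to itself.
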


In order to extend Theorem~\ref{thm:finite_K_5-3} to infinite digraphs, 
we begin by observing that each finite set~$\mathcal{B}$ of finite dicuts of~$D$ can be extended to a finite set~$\overline{\mathcal{B}}$ of finite dicuts of~$D$ such that the auxiliary graphs~${D{/}{\equiv_{\overline{\mathcal{B}}}}}$ is a minor\footnotemark\ of~$D$. 
If instead~$\mathcal{B}$ is a finite set of dicuts of finite capacity, then we can still extend~$\mathcal{B}$ to a finite set~$\overline{\mathcal{B}}$ and find a minor~$D^\ast$ of~$D$ which is $\mathcal{B}^\ast${\nbd}woodall with respect to the capacity obtained by restricting~$c$ to~${E(D^\ast)}$ for the set~$\mathcal{B}^\ast$ of all dicuts of~$D^\ast$, as we will establish in the following lemma. 

\footnotetext{We say a directed graph~$D$ is a minor of a directed graph~$D'$, if~$D$ can be obtained from~$D'$ by an arbitrary sequence of vertex-deletions, edge-deletions and edge-contractions. 
Note that in the literature these type of minors are also known as \emph{weak minors}.}

\begin{lemma}
    \label{lem:minorofaux2}
    Let~$D$ be a weakly connected digraph with capacity~$c$, let~$\mathcal{B}$ be a finite set of dicuts of~$D$ of finite capacity. 
    Then there is a finite set~${\overline{\mathcal{B}}}$ of dicuts of~$D$ of finite capacity with~${\mathcal{B} \subseteq \overline{\mathcal{B}}}$ 
    and a finite minor~$D^\ast$ of~$D$ such that for the set~$\mathcal{B}^\ast$ of dicuts of~$D^\ast$ 
    we have 
        ${\{ B \setminus c^{-1}(0) \colon B \in \mathcal{B}^\ast \} = 
        \{ B \setminus c^{-1}(0) \colon B \in \overline{\mathcal{B}} \}}$. 
\end{lemma}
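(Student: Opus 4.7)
The plan is to construct $\overline{\mathcal{B}}$ together with the finite minor $D^{\ast}$ from a finite branch-set decomposition of (a subset of) $V(D)$ that refines the equivalence partition of $\equiv_{\mathcal{B}}$ from Proposition~\ref{prop:quotient}. Set $E^{+}_{\mathcal{B}} := \bigcup_{B \in \mathcal{B}}(B \setminus c^{-1}(0))$, which is finite because $\mathcal{B}$ is finite and each $B \in \mathcal{B}$ has finite capacity. The equivalence relation $\equiv_{\mathcal{B}}$ has at most $2^{|\mathcal{B}|}$ classes; the edges of $D$ between distinct classes lie in $\bigcup \mathcal{B}$ with positive-capacity part contained in $E^{+}_{\mathcal{B}}$, whereas edges within a class stay outside $\bigcup \mathcal{B}$ entirely.

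Inside each equivalence class $C$, consider the weak components of $D[C]$. Call such a component \emph{important} if some edge of $E^{+}_{\mathcal{B}}$ is incident to it. The number of important weak components, summed across all equivalence classes, is bounded above by $2|E^{+}_{\mathcal{B}}|$ and is therefore finite; the remaining vertices (those in unimportant weak components) will be discarded via vertex-deletions.

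The minor $D^{\ast}$ is now built from $D$ in three stages. First, delete every vertex lying in an unimportant weak component along with its incident edges, all of which have capacity~$0$. Second, contract each important weak component to a single vertex via a spanning tree in the underlying undirected graph. Third, prune the remaining parallel edges so that only finitely many cap-$0$ edges remain, keeping, for every ordered pair of branch-set vertices, at most one representative per direction in which cap-$0$ edges of $\mathcal{B}$ actually run in $D$. The result is a finite minor of $D$, having one vertex per important component, finitely many positive-capacity edges inside $E^{+}_{\mathcal{B}}$, and at most $m(m-1)$ retained cap-$0$ edges where $m$ is the number of branch sets. Finally, define $\overline{\mathcal{B}}$ by adjoining to $\mathcal{B}$, for each dicut of $D^{\ast}$, a dicut of $D$ with the same positive-capacity part: lift the branch-set bipartition to a bipartition of $V(D)$ by placing each deleted vertex on the side dictated by the directions of the cap-$0$ edges of $\mathcal{B}$ incident to it. Each such lift has positive-capacity part contained in $E^{+}_{\mathcal{B}}$ and is thus of finite capacity. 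Since the branch sets refine $\equiv_{\mathcal{B}}$, every $B \in \mathcal{B}$ descends to a dicut of $D^{\ast}$ with the correct trace modulo $c^{-1}(0)$, and every dicut of $D^{\ast}$ was, by construction, lifted into $\overline{\mathcal{B}}$; this yields the desired equality of traces.

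The main obstacle is the lifting step: one must check that for every dicut of $D^{\ast}$ a consistent assignment of the deleted vertices exists so that no cap-$0$ edge of $D$ runs against the orientation of the dicut. If a direct assignment fails on some dicut, the remedy is to promote a small number of unimportant weak components to their own branch sets and to adjoin the extra dicuts of $D$ that then appear. Because every such obstruction is witnessed by chains of cap-$0$ edges of $\mathcal{B}$ that terminate at the finite set $E^{+}_{\mathcal{B}}$, only finitely many refinements are ever required, and so $\overline{\mathcal{B}}$ and $D^{\ast}$ stay finite throughout.
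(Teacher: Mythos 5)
Your base construction (contract the important weak components, delete the unimportant ones, prune parallel capacity\nobreakdash-$0$ edges) does not by itself give the required equality of trace sets, and the repair you sketch at the end is where the real work lies --- but the argument given for it does not hold up. Concretely, let $D$ be the directed path $a\to b\to u\to c\to d$ with $c(ab)=c(cd)=1$ and $c(bu)=c(uc)=0$, and let $\mathcal{B}$ consist of all four dicuts of $D$ (each a single edge, each of finite capacity). All $\equiv_{\mathcal{B}}$\nobreakdash-classes are singletons, $E^{+}_{\mathcal{B}}=\{ab,cd\}$, and $\{u\}$ is the unique unimportant component, so your $D^{\ast}$ is the disjoint union of the two edges $a\to b$ and $c\to d$. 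The bipartition $(\{a,c\},\{b,d\})$ then represents a dicut of $D^{\ast}$ with trace $\{ab,cd\}$, yet no dicut of $D$ contains both $ab$ and $cd$: any dicut containing $ab$ has $b$, hence $u$, hence $c$ and $d$ in its in-shore. So no choice of $\overline{\mathcal{B}}$ works for this $D^{\ast}$. Your remedy --- iteratively promoting unimportant components that witness such obstructions --- is unjustified: the stated reason (``chains of cap\nobreakdash-$0$ edges terminate at $E^{+}_{\mathcal{B}}$'') applies at best to the first round, since after a promotion the new obstructions are witnessed by paths ending at promoted vertices that are not incident with $E^{+}_{\mathcal{B}}$; each promotion enlarges $D^{\ast}$ and can create further unliftable dicuts, and you give no argument that the process stabilises after finitely many rounds. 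There is a second, independent gap: if some $B\in\mathcal{B}$ has capacity $0$ and one of its shores meets only deleted components (take $a\to b$ of capacity $1$, $a\to u$ and $b\to u$ of capacity $0$, and $E_D(\{a,b\},\{u\})\in\mathcal{B}$), then $B$ does not descend to a dicut of $D^{\ast}$ at all and the trace $\emptyset$ is missing from the left-hand set, so your claim that every $B\in\mathcal{B}$ descends is false.

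The paper resolves both issues with a single non-iterative device. After contracting $E(D)\setminus\bigcup\mathcal{B}$ it takes the finite core $W$ of vertices incident with positive-capacity edges, adds one extra vertex $v_{0}$ when some dicut fails to separate $W$ (this realises the trace $\emptyset$), and --- crucially --- retains for every non-empty proper subset $Z\subsetneq W$ one directed path $P_{Z}$ from $Z$ to $W\setminus Z$ whenever such a path exists. These finitely many finite paths record exactly the directed reachability between subsets of $W$, which is precisely the information needed to guarantee that every dicut of $D^{\ast}$ induces a bipartition of $W$ that is also induced by a dicut of $D$ with the same positive-capacity trace. To salvage your approach, replace the iterative promotion by this one-shot inclusion of connecting paths and add the analogue of $v_{0}$.
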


\begin{proof}
    Let~${E' := \bigcup \mathcal{B}}$. 
    Let~$\mathcal{B}'$ be the set of all dicuts~$B$ of~$D$ with~${B \subseteq E'}$. 
    Note that each dicut in~$\mathcal{B}'$ has finite capacity. 
    Consider the digraph~$D'$ obtained from~$D$ by contracting~${E(D) \setminus E'}$. 
    Note that the sets of dicuts of~$D{/}{\equiv_{\mathcal{B}'}}$ and of~$D'$ coincide and both graphs have~$E'$ as their edge set. 
    We claim that~${D'}$ and~${D{/}{\equiv_{\mathcal{B}'}}}$ are isomorphic. 
    Suppose for a contradiction that two distinct vertices~$v$ and~$w$ of~$D'$ cannot be separated by a dicut in~$\mathcal{B}'$, and hence are contained in the same strong component of~$D'$. 
    Let~$P$ be a directed path from~$v$ to~$w$ in~$D'$. 
    As~$D{/}{\equiv_{\mathcal{B}'}}$ contains no directed cycles, each edge of~$P$ is a loop of~$D{/}{\equiv_{\mathcal{B}'}}$, contradicting its construction. 
    Conversely, any vertices~$v$ and~$w$ which are not equivalent with respect to~$\equiv_{\mathcal{B}'}$ do not lie in the same weak component of~${D - E'}$ and hence are not identified in~$D'$. 
    
    If~$\mathcal{B}'$ is finite, then~$D'$ has only finitely many vertices and the result follows with~${\overline{\mathcal{B}} := \mathcal{B}'}$ and the digraph~${D^\ast}$ obtained from~$D'$ by deleting for all pairs of vertices all but finitely many edges of capacity~$0$ between them. 
    So let us assume~$\mathcal{B}'$ and hence~$D'$ is infinite. 
    
    Let~$W$ the set of all vertices of~$D'$ that are incident with some edge of positive capacity. 
    If~$\mathcal{B}'$ contains a dicut with a representation~$(X,Y)$ for which~${W \subseteq Y}$, for some vertex~${v_0 \in X}$ we define~${W' := W \cup \{v_0\}}$, and~${W' := W}$ if no such dicut exists. 
    For each non-empty proper subset~${Z \subsetneq W}$, let~$P_Z$ be a directed path from~$Z$ to~${W \setminus Z}$ in~$D'$ if such a path exist and let~$\mathcal{P}$ denote the set of all paths~$P_Z$. 
    Now let~$D^\ast$ be the digraph obtained from~${D'[W'] \cup \mathcal{P}}$ by deleting for each pair of vertices all but finitely many edges of capacity~$0$ between them. 
    
    Note that every component of~$D^\ast$ contains a vertex in~$W'$ and each edge of~$D^\ast$ is contained in a directed path between some vertices of~$W'$. 
    Consider a dicut~$B$ of~$D^\ast$ with a representation~${(X,Y)}$. 
    As each component of~$D^\ast$ contains a vertex of~$W'$, note that both~${X \cap W'}$ and~${Y \cap W'}$ are non-empty proper subsets of~$W'$. 
    If both~${X \cap W}$ and~${Y \cap W}$ are proper non-empty subsets of~$W$, then by the choice of~$\mathcal{P}$, there is a dicut~$B'$ of~$D'$ with a representation~${(X',Y')}$ such that~${X \cap W \subseteq X'}$ and~${Y \cap W \subseteq Y'}$. 
    Otherwise, one of~$X$ or~$Y$ is disjoint from~$W$ and is equal to~$\{v_0\}$, thus~$B$ has capacity~$0$ and we can choose~$B'$ to be the dicut of~$D'$ separating~${\{v_0\}}$ from~$W$. 
    In particular, in both cases we obtain~${B \setminus c^{-1}(0) = B' \setminus c^{-1}(0)}$. 
    
    On the other hand, each dicut~${B' \in \mathcal{B}'}$ with a representation~${(X',Y')}$ that separates~$W$ defines a dicut~$B$ of~$D^\ast$ represented by~${(X' \cap V(D^\ast), Y' \cap V(D^\ast))}$ for which we trivially obtain that~${B \setminus c^{-1}(0) = B \setminus c^{-1}(0)}$. 
    Lastly, if there is a dicut~${B' \in \mathcal{B}'}$ with a representation~${(X',Y')}$ that does not separates~$W$, then~${c(B') = 0}$ and~${( \{v_0\}, V(D^\ast) \setminus \{v_0\} )}$ represents a dicut~$B$ with~${B \setminus c^{-1}(0) = B' \setminus c^{-1}(0) = \emptyset}$. 
    Hence, the result follows with any finite~${\overline{\mathcal{B}} \supseteq \mathcal{B}}$ that for each~${B \in \mathcal{B}'}$ contains a~${B' \in \mathcal{B}'}$ with~${B \setminus c^{-1}(0) = B' \setminus c^{-1}(0)}$. 
\end{proof}

We now lift Theorem~\ref{thm:finite_K_5-3} to infinite digraphs using Lemmas~\ref{lem:compactness} and~\ref{lem:minorofaux2}. 

\begin{corollary}
    \label{cor:infinite_K_5-3}
    Every weakly connected digraph~$D$ 
    (with capacity~$c$) 
    whose underlying multigraph contains no minor isomorphic to either the triangular prism~$\triangularprism$, $K_5$ or~$K_{3,3}$
    is $\mathfrak{B}_\textnormal{fin}${\nbd}woodall 
    (with respect to~$c$) 
    for the class~$\mathfrak{B}_\textnormal{fin}$ of dicuts of~$D$ of finite size (capacity). 
\end{corollary}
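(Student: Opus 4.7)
The plan is to apply Lemma~\ref{lem:compactness} for the class~$\mathfrak{B}_\textnormal{fin}$. Concretely, I need to show that every finite subset~${\mathcal{B} \subseteq \mathfrak{B}_\textnormal{fin}}$ admits a finite extension~${\overline{\mathcal{B}} \subseteq \mathfrak{B}_\textnormal{fin}}$ with~${\mathcal{B} \subseteq \overline{\mathcal{B}}}$ such that~${D{/}{\equiv_{\overline{\mathcal{B}}}}}$ is $\overline{\mathcal{B}}${\nbd}woodall with respect to the capacity induced by~$c$; Lemma~\ref{lem:compactness} then delivers the conclusion immediately.

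Given such a~$\mathcal{B}$, I would first invoke Lemma~\ref{lem:minorofaux2} to produce a finite extension~${\overline{\mathcal{B}} \supseteq \mathcal{B}}$ inside~$\mathfrak{B}_\textnormal{fin}$ together with a finite minor~$D^\ast$ of~$D$ whose class~$\mathcal{B}^\ast$ of all dicuts agrees with~$\overline{\mathcal{B}}$ after removing capacity{\nbd}$0$ edges. Since the underlying multigraph of~$D^\ast$ is a minor of that of~$D$, it excludes~$K_5$,~$K_{3,3}$ and~$\triangularprism$ as minors; hence by Kuratowski's theorem (equivalently, Wagner's theorem) it is planar and still contains no~$\triangularprism$ minor. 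Theorem~\ref{thm:finite_K_5-3} therefore applies to~$D^\ast$ equipped with the restriction of~$c$, certifying that~$D^\ast$ is $\mathcal{B}^\ast${\nbd}woodall with respect to~${c \restricted E(D^\ast)}$.

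The final step is to lift this property from~$D^\ast$ back to~${D{/}{\equiv_{\overline{\mathcal{B}}}}}$. Every dicut in~$\overline{\mathcal{B}}$ is a dicut of~${D{/}{\equiv_{\overline{\mathcal{B}}}}}$ by Proposition~\ref{prop:quotient}, and by the conclusion of Lemma~\ref{lem:minorofaux2} its positive{\nbd}capacity edges coincide with those of a corresponding dicut in~$\mathcal{B}^\ast$, so a~$c${\nbd}cheapest dicut has the same capacity on both sides. A $c${\nbd}disjoint family of $\mathcal{B}^\ast${\nbd}dijoins of~$D^\ast$ is constrained only on edges of positive capacity, so it corresponds to a $c${\nbd}disjoint family of $\overline{\mathcal{B}}${\nbd}dijoins of~${D{/}{\equiv_{\overline{\mathcal{B}}}}}$ of the same size, and the reverse correspondence works the same way by restricting to the edges surviving in~$D^\ast$. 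Therefore~${D{/}{\equiv_{\overline{\mathcal{B}}}}}$ is $\overline{\mathcal{B}}${\nbd}woodall with respect to~$c$, as required.

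I expect the main obstacle to be the precise bookkeeping of this last step: the equality of dicut sets in Lemma~\ref{lem:minorofaux2} is only asserted up to capacity{\nbd}$0$ edges, so one must verify cleanly that on both sides of the woodall identity the capacity{\nbd}$0$ edges are inessential, both when computing the capacity of a cheapest dicut and when checking the $c${\nbd}disjointness constraint on families of dijoins. This bookkeeping is expected to be routine but requires some care.
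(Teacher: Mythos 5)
Your proposal follows essentially the same route as the paper: reduce via Lemma~\ref{lem:compactness}, use Lemma~\ref{lem:minorofaux2} to obtain the finite extension~$\overline{\mathcal{B}}$ and the finite minor~$D^\ast$, observe that excluding~$K_5$ and~$K_{3,3}$ forces planarity so that Theorem~\ref{thm:finite_K_5-3} applies, and transfer the woodall property between~$D^\ast$ and~${D{/}{\equiv_{\overline{\mathcal{B}}}}}$ using the agreement of dicuts up to capacity{\nbd}$0$ edges. The only detail you omit is that Theorem~\ref{thm:finite_K_5-3} requires~$D^\ast$ to be weakly connected; the paper secures this by first reducing to the case where~$D$ has no dicut of capacity~$0$ (otherwise there is nothing to show), which guarantees~${\emptyset \notin \mathcal{B}^\ast}$ and hence the weak connectivity of~$D^\ast$.
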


\begin{proof}
    We may assume that~$D$ contains no dicuts of capacity~$0$ or else there is nothing to show. 
    Consider a finite set~${\mathcal{B}}$ of dicuts of~$D$ of finite capacity. 
    By Lemma~\ref{lem:minorofaux2} there is a finite set~$\overline{\mathcal{B}}$ of dicuts with~${\mathcal{B} \subseteq \overline{\mathcal{B}}}$ of finite capacity
    and a finite minor~$D^\ast$ of~$D$ such that 
    with~$\mathcal{B}^\ast$ denoting the set of dicuts of~$D^\ast$ and~$c^\ast$ denoting the capacity of~$D^\ast$ obtained from restricting~$c$ to~$E(D^\ast)$, 
    we have that~$\{ B \setminus c^{-1}(0) \colon B \in \overline{\mathcal{B}} \} = \{ B \setminus c^{-1}(0) \colon B \in \mathcal{B}^\ast \}$. 
    In particular, we conclude~${D{/}\overline{\mathcal{B}}}$ is $\overline{\mathcal{B}}${\nbd}woodall with respect to~$c$ if and only if~$D^\ast$ is~$\mathcal{B}^\ast${\nbd}woodall with respect to~$c^\ast$. 
    
    Since~$\overline{\mathcal{B}}$ contains no dicuts of capacity~$0$, we observe that~$\emptyset \notin \mathcal{B}^\ast$ and hence that~$D^\ast$ is weakly connected. 
    Since~$D$ does not contain a minor isomorphic to either~$\triangularprism$, $K_5$ or~$K_{3,3}$, neither does~$D^\ast$. 
    Therefore, $D^\ast$ is~$\mathcal{B}^\ast${\nbd}woodall with respect to~$c^\ast$ by Theorem~\ref{thm:finite_K_5-3}. 
    The result now follows from Lemma~\ref{lem:compactness}. 
\end{proof}

\medskip

Before we come to the next result we again have to introduce further notation. 

A one-way infinite path is called a \emph{ray} and the unique vertex of degree~$1$ in a ray is called its \emph{start vertex}. 
An orientation of a ray~$R$ such that every vertex is oriented away from the start vertex of~$R$ is called a \emph{forwards directed ray}, or briefly an \emph{out-ray}. 
A \emph{backwards directed ray}, or briefly a \emph{back-ray}, is defined analogously. 

For a weakly connected digraph~$D$ call a strongly connected component~$C$ of~$D$ a \emph{source component} 
if no edge of~$D$ has its head in~${V(C)}$ and its tail in~${V(D) \setminus V(C)}$. 
A \emph{sink component} of $D$ is defined analogously.
Furthermore, call a dicut~$B$ of~$D$ \emph{sink-sided} (resp.~\emph{source-sided}) if~${\outsh_D(B)}$ (resp.~${\insh_D(B)}$) contains neither a sink component (resp.~source component) of~$D$ nor a out-ray (resp.~back-ray) of~$D$. 
A dicut of~$D$ that is either source-sided or sink-sided is called a \emph{source-sink} dicut. 

The following result is due to Feofiloff and Younger. 
We state it here adapted to our notation.

\begin{theorem}
    \cite{FY:source-sink}
    \label{thm:fin_source-sink}
    Every finite weakly connected digraph~$D$ 
    (with finitary capacity~$c$) 
    is $\mathfrak{B}_\textnormal{s-s}${\nbd}woodall 
    (with respect to~$c$) 
    for the class~$\mathfrak{B}_\textnormal{s-s}$ of all source-sink dicuts of~$D$.
\end{theorem}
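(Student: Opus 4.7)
The plan is to prove the statement by induction on the minimum capacity $m$ of a source-sink dicut, at each stage constructing a single $\mathfrak{B}_\textnormal{s-s}${\nbd}dijoin $J$ that hits every minimum source-sink dicut the right number of times, so that decrementing $c$ by the indicator of $J$ drops $m$ to $m-1$ and the inductive hypothesis finishes the argument. The base case $m=0$ is vacuous, and by Proposition~\ref{prop:capacity} the capacitated and uncapacitated versions are equivalent, so we may focus on the simple-capacity case.

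The first structural input is an uncrossing analysis of the minimum source-sink dicuts. A double counting as in Lemma~\ref{lem:mini-corner-closed} shows that for crossing source-sink dicuts $B_1, B_2$ of capacity $m$, both $B_1 \wedge B_2$ and $B_1 \vee B_2$ also have capacity $m$; moreover if $B_1$ and $B_2$ are both source-sided (respectively both sink-sided), then so are $B_1 \wedge B_2$ and $B_1 \vee B_2$, since the source-component and back-ray conditions (resp.\ sink-component and out-ray conditions) depend only on the relevant shore. Consequently, within each of the sub-families of minimum source-sided dicuts and of minimum sink-sided dicuts, the class is corner-closed, and Theorem~\ref{thm:m-woddall} applied separately to each sub-family yields $m$ pairwise disjoint dijoins meeting every minimum dicut of that type.

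The heart of the proof is to weld these two collections into a single set of $m$ pairwise disjoint $\mathfrak{B}_\textnormal{s-s}${\nbd}dijoins; the essential difficulty is that a minimum source-sided dicut and a minimum sink-sided dicut may cross, and their corners need not be source-sink dicuts at all. The two natural routes are either Schrijver's framework of submodular functions on crossing families, from which one deduces the box total dual integrality of the covering polytope of $\mathfrak{B}_\textnormal{s-s}${\nbd}dijoins, or the direct combinatorial approach of Feofiloff and Younger, which orders the minimum source-sink dicuts by inclusion of their in-shores and constructs the required edge assignment through a Hall-type argument on an auxiliary bipartite structure.

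This welding step is the main obstacle. The class $\mathfrak{B}_\textnormal{s-s}$ is not corner-closed as a whole, so neither Theorem~\ref{thm:m-woddall} nor the balanced-hypergraph machinery of Section~\ref{sec:dicut-hypergraph} applies directly to the full family; handling the cross-type interactions between minimum source-sided and minimum sink-sided dicuts is precisely the nontrivial content of the Feofiloff–Younger theorem, and requires genuinely new ingredients — either an LP-duality integrality statement or a careful combinatorial exchange argument — beyond the methods developed so far in the paper.
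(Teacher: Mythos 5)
Your proposal is not a proof; it is an outline that stops exactly where the theorem becomes nontrivial. Note also that the paper itself does not prove Theorem~\ref{thm:fin_source-sink}: it is imported from Feofiloff and Younger \cite{FY:source-sink}, restated in the paper's notation, and only later lifted to infinite digraphs via compactness. So there is no in-paper argument for you to match; you would have to supply the full Feofiloff--Younger argument, and you do not.

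Two concrete gaps. First, the ``welding'' step that you flag yourself is the entire content of the theorem. The family $\mathfrak{B}_\textnormal{s-s}$ is not corner-closed (a corner of a crossing source-sided and sink-sided pair need not be a source-sink dicut), so Theorem~\ref{thm:m-woddall} and the balanced-hypergraph machinery of Section~\ref{sec:dicut-hypergraph} do not apply to the full family; naming two possible tools (Schrijver's submodular-flow framework, or a Hall-type exchange argument) without executing either leaves the theorem unproved. Second, your induction scheme is incomplete even before the welding issue arises: to decrement $c$ by the indicator of a single dijoin $J$ and invoke the inductive hypothesis, you need $J$ to meet \emph{every} source-sink dicut $B$ --- not only the minimum ones --- in at most ${c(B)-(m-1)}$ positive-capacity edges, so that no dicut drops below capacity ${m-1}$. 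The $m$ disjoint dijoins obtained from Theorem~\ref{thm:m-woddall} for the corner-closed subfamily of minimum source-sided (or minimum sink-sided) dicuts carry no such guarantee for the non-minimum source-sink dicuts, so even within one sub-family the reduction to minimum dicuts does not go through as stated. Your uncrossing observation --- that the minimum source-sided dicuts and the minimum sink-sided dicuts each form a corner-closed family in a finite digraph --- is correct, but it is only the easy preamble of the Feofiloff--Younger proof, not its substance.
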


Let us call a weakly connected digraph~$D$ \emph{source-sink connected} if 
for every~$C^+$ which is either a source component of~$D$ or a back-ray of~$D$, 
and for every~$C^-$ which is either a sink component of~$D$ or a out-ray of~$D$, there exists a directed path from~$C^+$ to~$C^-$ in~$D$. 

With Theorem~\ref{thm:fin_source-sink}, Feofiloff and Younger verified Conjecture~\ref{conj:woodall} for the class of finite source-sink connected digraphs since each dicut of a finite source-sink connected digraph is a source-sink dicut. 

Now we shall lift Theorem~\ref{thm:fin_source-sink} to infinite graphs using Lemma~\ref{lem:compactness}. 

\begin{corollary}
    \label{cor:inf_source-sink}
    Every weakly connected digraph~$D$ 
    (with capacity~$c$)
    is $\mathfrak{B}_\textnormal{s-s}$-woodall 
    (with respect to~$c$) 
    for the class~$\mathfrak{B}_\textnormal{s-s}$ of all source-sink dicuts of~$D$ of finite size (capacity).
\end{corollary}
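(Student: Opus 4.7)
The plan is to mirror the proof of Corollary~\ref{cor:infinite_K_5-3}: apply Lemma~\ref{lem:compactness} to reduce to a finite situation and use Lemma~\ref{lem:minorofaux2} to pass to a finite minor. Given a finite subset $\mathcal{B} \subseteq \mathfrak{B}_\textnormal{s-s}$ of source-sink dicuts of finite capacity, I would first apply Lemma~\ref{lem:minorofaux2} to obtain a finite enlargement $\overline{\mathcal{B}} \supseteq \mathcal{B}$ of finite-capacity dicuts together with a finite weakly connected minor $D^\ast$ whose dicuts $\mathcal{B}^\ast$ correspond to $\overline{\mathcal{B}}$ modulo capacity-$0$ edges. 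The goal is to apply Theorem~\ref{thm:fin_source-sink} to $D^\ast$ and lift the resulting dijoin packing to $D$ via the correspondence and Lemma~\ref{lem:compactness}.

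The new difficulty, absent from Corollary~\ref{cor:infinite_K_5-3}, is that Theorem~\ref{thm:fin_source-sink} only delivers $\mathfrak{B}_\textnormal{s-s}^\ast$-woodall for $D^\ast$ with respect to its own source-sink dicuts, rather than $\mathcal{B}^\ast$-woodall. The natural way to bridge this gap is to arrange that $D^\ast$ is source-sink connected, since in a finite source-sink connected digraph every dicut is source-sink and thus $\mathfrak{B}_\textnormal{s-s}^\ast = \mathcal{B}^\ast$. To enforce this I would enrich $\overline{\mathcal{B}}$ as follows: whenever a source $s$ and a sink $t$ of $D^\ast$ are not joined by a directed path in $D^\ast$, the set $X \subseteq V(D)$ of vertices reachable from the pull-back of $s$ in $D$ does not meet the pull-back of $t$. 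All edges of $D$ between $X$ and $V(D) \setminus X$ must run from outside into $X$, so $E_D(V(D) \setminus X, X)$ is a dicut of $D$ of finite capacity (bounded by the capacity of the corresponding cut in $D^\ast$). Adjoining this source-sided dicut, together with the sink-sided dual obtained from backward reachability from the pull-back of $t$, to $\overline{\mathcal{B}}$ and re-invoking Lemma~\ref{lem:minorofaux2} produces a refined minor in which the pair $(s, t)$ no longer witnesses failure of source-sink connectedness; since $D^\ast$ is finite at each stage, this iterative enrichment terminates after finitely many rounds.

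The main obstacle is the careful verification that the reachability dicuts added along this iteration are genuinely source-sink in the infinite sense---in particular, that the in-shore $X$ of the added dicut contains neither a source component nor a back-ray of $D$. This is the point where the analysis in $D^\ast$ (where only vertices matter) must be reconciled with the ray structure of $D$. I expect that one can arrange this by further enlarging $X$ to absorb every back-ray or source component that meets it before taking the reachability closure, keeping the resulting dicut of finite capacity because the edges of positive capacity leaving the enlarged $X$ are still controlled by the edges of $D^\ast$. Once this source-sink verification is in place, Theorem~\ref{thm:fin_source-sink} applies to the full class $\mathcal{B}^\ast$, and the correspondence together with Lemma~\ref{lem:compactness} yields $\mathfrak{B}_\textnormal{s-s}$-woodallness of $D$ with respect to $c$.
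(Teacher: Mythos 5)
Your plan diverges from the paper's proof, and the divergence is where it breaks. The paper does not need Lemma~\ref{lem:minorofaux2} or any source-sink-connectedness surgery at all: it simply observes that every dicut in a finite set ${\mathcal{B} \subseteq \mathfrak{B}_\textnormal{s-s}}$ remains a (finite) source-sink dicut of the quotient ${D{/}{\equiv_{\mathcal{B}}}}$, applies Theorem~\ref{thm:fin_source-sink} to that finite quotient for its class of source-sink dicuts (which contains $\mathcal{B}$), and finishes with Lemma~\ref{lem:compactness}. The key observation you are missing is this preservation of the source-sink property under the quotient; once you have it, the ``new difficulty'' you identify disappears, because you never need the full class of dicuts of the auxiliary digraph, only its source-sink dicuts.

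The gap in your route is concrete: Lemma~\ref{lem:compactness} requires ${\overline{\mathcal{B}} \subseteq \mathfrak{B}}$, so every dicut you adjoin must itself be a source-sink dicut of~$D$ of finite capacity, and the reachability dicuts you propose are not. Your set~$X$ (the forward-reachability closure of the pull-back of a source~$s$ of~$D^\ast$) is closed under out-neighbours, so it is the \emph{in-shore} of the dicut ${E_D(V(D) \setminus X, X)}$; since the pull-back of~$s$ is closed under in-neighbours, it --- and hence the in-shore --- contains a source component or a back-ray of~$D$, so the dicut is not source-sided. Dually, the out-shore ${V(D) \setminus X}$ contains the pull-back of the sink~$t$, which is closed under out-neighbours and therefore contains a sink component or an out-ray of~$D$, so the dicut is not sink-sided either. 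Your proposed repair of ``further enlarging~$X$'' makes matters worse, since enlarging the in-shore can only add more source components and back-rays to it. Even setting Lemma~\ref{lem:compactness} aside, adjoining these (possibly cheaper) non-source-sink dicuts to the class whose minimum governs the packing in~$D^\ast$ could drive the number of dijoins you obtain below the minimum capacity over~$\mathfrak{B}_\textnormal{s-s}$, so the lift back to~$D$ would fail anyway.
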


\begin{proof}
    Note that given a finite set ${\mathcal{B} \subseteq \mathfrak{B}_\textnormal{s-s}}$ every~${B \in \mathcal{B}}$ is also a finite source-sink dicut of~${D{/}\equiv_{\mathcal{B}}}$. 
    Hence, from Theorem~\ref{thm:fin_source-sink} we deduce that~$D{/}\equiv_{\mathcal{B}}$ is $\mathcal{B}$-woodall with respect to the capacity obtained by restricting~$c$ to~${E(D{/}\equiv_{\mathcal{B}})}$, which indeed is finitary. 
    The result now follows from Lemma~\ref{lem:compactness}. 
\end{proof}

As for finite digraphs, this has an immediate consequence for source-sink connected digraphs regarding Conjecture~\ref{conj:woodall} and the class of all finite dicuts. 

\begin{corollary}
    \label{cor:inf_source-sink_woodall}
    Every weakly connected, source-sink connected digraph $D$ 
    (with capacity~$c$) 
    is $\mathfrak{B}_\textnormal{fin}${\nbd}woodall 
    (with respect to~$c$) 
    for the class~$\mathfrak{B}_\textnormal{fin}$ of dicuts of~$D$ of finite size (capacity). 
\end{corollary}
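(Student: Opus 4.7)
The plan is to deduce the statement directly from Corollary~\ref{cor:inf_source-sink} by showing that in a weakly connected source-sink connected digraph $D$, every dicut of $D$ is in fact a source-sink dicut. Once this is established, the classes of dicuts of finite size (capacity) and of source-sink dicuts of finite size (capacity) coincide, so the required $\mathfrak{B}_\textnormal{fin}${\nbd}woodall property is precisely the $\mathfrak{B}_\textnormal{s-s}${\nbd}woodall property granted by Corollary~\ref{cor:inf_source-sink}.

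The key step is the following structural observation, which I would phrase as a short claim. Let $B$ be a dicut of $D$ represented by $(X,Y)$ with $X = \outsh_D(B)$ and $Y = \insh_D(B)$. Suppose towards a contradiction that $B$ is neither source-sided nor sink-sided. Then by definition the in-shore $Y$ contains some $C^+$ that is either a source component of $D$ or a back-ray of $D$, and the out-shore $X$ contains some $C^-$ that is either a sink component of $D$ or an out-ray of $D$. Source-sink connectedness of $D$ then produces a directed path $P$ from $C^+$ to $C^-$. The initial vertex of $P$ lies in $Y$ while its terminal vertex lies in $X$, so $P$ must traverse some edge with tail in $Y$ and head in $X$. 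But $B = E_D(X,Y)$ is a dicut with out-shore $X$, so every edge between $X$ and $Y$ lies in $B$ and is oriented from $X$ to $Y$; no edge goes from $Y$ to $X$. This contradiction shows that every dicut of $D$ is source-sided or sink-sided, hence a source-sink dicut.

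From this, $\mathfrak{B}_\textnormal{fin} \subseteq \mathfrak{B}_\textnormal{s-s}$ (and the reverse inclusion is trivial since every source-sink dicut is a dicut), so the two classes coincide. Applying Corollary~\ref{cor:inf_source-sink} (with respect to $c$ in the capacitated case) then finishes the proof. I do not expect any real obstacle here; the argument is essentially the infinite analogue of the immediate finite observation used by Feofiloff and Younger to deduce Conjecture~\ref{conj:woodall} for finite source-sink connected digraphs, and the capacitated version requires no modification because the claim above is purely structural and does not use the sizes or capacities of the dicuts involved.
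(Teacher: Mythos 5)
Your proposal is correct and follows exactly the paper's own route: the paper also deduces the statement from Corollary~\ref{cor:inf_source-sink} together with the observation that in a source-sink connected digraph every dicut is a source-sink dicut (which the paper states without proof and you verify in detail). No discrepancies.
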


\begin{proof}
    The proof follows from Corollary~\ref{cor:inf_source-sink} and the observation that every dicut of~$D$ is a source-sink dicut.
\end{proof}

\subsection{Classes of infinite dibonds}
\label{subsec:infinite-dibonds}
\ 

In this subsection, we will prove Theorem~\ref{thm:infinite-cardinality-woodall}. 

First we concentrate on the case where each dibond in the class has the same size as the digraph itself. 
Note that the following proof works for sets of bonds in undirected multigraphs as well. 

\begin{lemma}
    \label{lem:regular-woodall}
    Let~$\kappa$ be an infinite cardinal, 
    let~$D$ be a 
    weakly connected 
    digraph of size~$\kappa$, 
    and let~$\mathfrak{B}$ be a class of dibonds of~$D$ each of which has size~$\kappa$. 
    Then~$D$ is $\mathfrak{B}${\nbd}woodall. 
\end{lemma}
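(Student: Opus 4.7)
The upper bound is immediate: any pairwise disjoint family of $\mathfrak{B}${\nbd}dijoins must contribute at least one distinct edge per dijoin to every fixed $B \in \mathfrak{B}$, so the family has cardinality at most $|B| = \kappa$.

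For the lower bound, the approach is to construct $\kappa$ pairwise disjoint $\mathfrak{B}${\nbd}dijoins $\{F_\alpha : \alpha < \kappa\}$ by transfinite recursion. Fix a well-ordering of $\mathfrak{B} = \{B_\xi : \xi < \lambda\}$ with $\lambda := |\mathfrak{B}|$, and for each $B \in \mathfrak{B}$ fix an enumeration $B = \{e^B_\eta : \eta < \kappa\}$, using that $|B|=\kappa$. Well-order the ``demands'' $\mathfrak{B} \times \kappa$ as $\{(\alpha_\zeta, B_\zeta) : \zeta < \nu\}$ for some ordinal $\nu$, the precise choice of which is made below. Starting from $F_\alpha := \emptyset$ for all $\alpha$, at step $\zeta$ do nothing if $F_{\alpha_\zeta} \cap B_\zeta \neq \emptyset$; otherwise place into $F_{\alpha_\zeta}$ the first edge in the enumeration of $B_\zeta$ that does not already lie in $\bigcup_\beta F_\beta$. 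Take unions at limits. At the end, the $F_\alpha$'s are pairwise disjoint by construction, and each $F_\alpha$ meets every $B \in \mathfrak{B}$ because every demand is eventually processed, so they form the desired $\mathfrak{B}${\nbd}dijoins.

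The crux is to verify that the recursion never stalls, i.e.\ that at each step $\zeta$ with $F_{\alpha_\zeta} \cap B_\zeta = \emptyset$, an unused edge of $B_\zeta$ exists. Since each step places at most a single edge and the placed edge $e \in F_\alpha$ simultaneously satisfies the demand $(\alpha, B')$ for every $B' \ni e$, the number of non-skipped steps is at most $|E(D)| = \kappa$. In the case $\lambda \leq \kappa$ one may list the demands in order type $\kappa$, and then at every step $\zeta < \kappa$ strictly fewer than $\kappa = |B_\zeta|$ edges have been placed in total, so an unused edge is available. The main obstacle is the case $\lambda > \kappa$, in which the naive cardinality count is insufficient: one must enumerate $\mathfrak{B} \times \kappa$ with care (so that no color $\alpha$ is permitted to absorb edges of any fixed dibond $B$ before the demands $(\beta, B)$ for all $\beta < \kappa$ have had an opportunity to be addressed), and then argue by transfinite induction, again exploiting that each placement discharges many demands at once, that at every $\zeta < \nu$ the set of edges of $B_\zeta$ already placed into $F_\beta$'s for $\beta \neq \alpha_\zeta$ does not exhaust $B_\zeta$.
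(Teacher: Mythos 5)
Your reduction to the case ${\lambda := \lvert\mathfrak{B}\rvert \le \kappa}$ is fine, but the remaining case is not a technicality you can defer: $\mathfrak{B}$ may well have size~$2^\kappa$, and there your argument has a genuine gap. You correctly isolate the danger --- that all $\kappa$ edges of some ${B \in \mathfrak{B}}$ get absorbed into colour classes $F_\beta$ with ${\beta\neq\alpha}$ before the demand $(\alpha,B)$ is served --- but the suggested remedy (``enumerate $\mathfrak{B}\times\kappa$ with care'') cannot work at this level of generality. Your recursion only sees the hypergraph $\mathcal{H}(D,\mathfrak{B})$, i.e.\ a family of $\kappa$-sized subsets of a $\kappa$-sized edge set; it uses neither the weak connectivity of~$D$ nor the fact that the members of~$\mathfrak{B}$ are dibonds rather than arbitrary dicuts. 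Example~\ref{ex:infinite-dicuts} of the paper (the digraph consisting of $\kappa$ pairwise non-incident edges, with $\mathfrak{B}$ the dicuts $B_I$ for ${\lvert I\rvert=\kappa}$) satisfies every hypothesis visible to such an abstract argument, yet admits no two disjoint $\mathfrak{B}$\nbd dijoins; so any ordering of the demands must stall on that instance, and no purely order-theoretic refinement of the greedy scheme can prove the lemma.

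The paper avoids the problem by never recursing over~$\mathfrak{B}$ at all. It recurses over the set ${\kappa\times V(D)\times V(D)}$, which has size exactly~$\kappa$, and at step~$\alpha$, for the designated colour~$i_\alpha$ and vertex pair ${(u_\alpha,v_\alpha)}$, it adds to~$F_{i_\alpha}$ an entire system of undirected paths, edge-disjoint from everything placed so far, joining all endpoints of previously used edges together with $u_\alpha$ and~$v_\alpha$. The bookkeeping keeps ${\bigl\lvert\bigcup_i F_i\bigr\rvert<\kappa}$ at every stage, so every ${B\in\mathfrak{B}}$ still has an unused edge; and since the two shores of a dibond of a weakly connected digraph are weakly connected, the added paths must cross every dibond in~$\mathfrak{B}$ separating $u_\alpha$ from~$v_\alpha$ --- this single step discharges all (possibly $2^\kappa$ many) such dibonds for the colour~$i_\alpha$ at once. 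To salvage your approach you would have to build this kind of connectivity argument into the choice of which edges to place, at which point you have essentially reconstructed the paper's proof.
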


\begin{proof}
    We build the dijoins inductively.
    For each~${i < \kappa}$ we start with empty sets~$F_i^{0}$.
    We fix an arbitrary enumeration ${\{ (i_\alpha, u_\alpha, v_\alpha ) \setsep \alpha < \kappa \}}$ of the set~${\kappa \times V(D) \times V(D)}$.
    
    Suppose for~${\alpha < \kappa}$ we already constructed a family of disjoint sets~${( F_i^{\alpha} \setsep i < \kappa )}$ of edges
    such that~${F^\alpha := \bigcup \{ F_i^\alpha \setsep i < \kappa \}}$ has cardinality less than~${\lvert \alpha \rvert^+ \cdot \aleph_0}$. 
    Let~${X^{\alpha} \subseteq V(G)}$ denote the set containing the end vertices of~${F^{\alpha}}$ as well as~$u_\alpha$ and~$v_\alpha$.
    For each pair of distinct vertices~${x, y \in X^{\alpha}}$, let~${P^\alpha(x,y)}$ denote an undirected path between~$x$ and~$y$ in~$D$ which is edge disjoint to~$F^{\alpha}$ if such a path exists, or let~${P^\alpha(x,y) := \emptyset}$ otherwise. 
    We set
    \[
        {F_{i_\alpha}^{\alpha+1} := F_{i_\alpha}^{\alpha} \cup \bigcup \{ P^\alpha(x,y) \setsep x,y \in X^\alpha\}} \text{ and  } {F_{j}^{\alpha + 1} := F_{j}^{\alpha}} \text{ for each } {j \neq i_\alpha}.
    \]
    Note that~${F^{\alpha+1} = \bigcup \{ F_i^{\alpha+1} \setsep i < \kappa\}}$ has cardinality less than~${\lvert \alpha \rvert^+ \cdot \aleph_0}$, and hence we can continue the construction. 
    
    For a limit ordinal~${\lambda \leq \kappa}$, we set~${F_i^\lambda := \bigcup \{ F_i^\alpha \setsep \alpha < \lambda \}}$ for each~${i < \kappa}$. 
    Note that by construction each~$F_i^\lambda$ has cardinality at most~${\lvert \lambda \rvert}$. 
    Moreover, for all but at most~$\lambda$ many~${i < \kappa}$ the set~$F_i^\lambda$ is empty. 
    Hence~${F^\lambda := \bigcup \{ F_i^\lambda \setsep i < \kappa \}}$ has cardinality at most~${\lvert \lambda \rvert^2 = \lvert \lambda \rvert < \lvert \lambda \rvert^+}$ and we can continue the construction as long as~${\lambda < \kappa}$. 
    
    \begin{claim}
        \label{claim:construction-dijoins}
        $F_{i_\alpha}^{\alpha+1}$ meets every dibond~${B \in \mathfrak{B}}$ 
        separating~$u_\alpha$ and~$v_\alpha$.
    \end{claim}
    
    \begin{proof}
        [Proof of Claim~\ref{claim:construction-dijoins}]
        By construction~$X^\alpha$ meets both~$\outsh_D(B)$ and~$\insh_D(B)$. 
        Since~$B$ has size~$\kappa$ there is an edge in~${B \setminus F^{\alpha}}$, and since~$B$ separates some pair of vertices in~$X^\alpha$, there are vertices~${x, y \in X^\alpha}$ with~${x \in \outsh_D(B)}$ and~${y \in \insh_D(B)}$ for which there is an undirected path between~$x$ and~$y$ which is edge disjoint to~$F^{\alpha}$.
        And since every such path meets~$B$, so does~${P^\alpha(x,y) \neq \emptyset}$ and hence~$F_{i_\alpha}^{\alpha+1}$. 
    \end{proof}
    
    With Claim~\ref{claim:construction-dijoins} we can deduce that the set~${\{ F_i^\kappa \setsep i < \kappa \}}$ is the desired set of disjoint $\mathfrak{B}${\nbd}dijoins.
\end{proof}

A \emph{decomposition}~$\mathcal{H}$ of a graph~$G$ is a set of subgraphs of~$G$ such that each edge of~$G$ is contained in a unique~${H \in \mathcal{H}}$. 
For an infinite cardinal~$\kappa$, a decomposition of~$G$ is \emph{$\kappa${\nbd}bond-faithful} if 
\begin{enumerate}
    \item each~$H$ has at most~$\kappa$ many edges; 
    \item any bond of size at most~$\kappa$ of~$G$ is a bond of some~${H \in \mathcal{H}}$; and 
    \item any bond of size less than~$\kappa$ of some~${H \in \mathcal{H}}$ is a bond of~$G$. 
\end{enumerate}

\begin{theorem}
    [Laviolette \cite{laviolette}*{Theorem~3}, Soukup \cite{soukup-elementary-submodels}*{Theorem~6.3}]
    \label{thm:laviolette}
    For all infinite cardinals~$\kappa$ every graph has a $\kappa${\nbd}bond-faithful decomposition.
\end{theorem}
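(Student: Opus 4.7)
The plan is to invoke the technique of elementary submodels. Let~$\theta$ be a sufficiently large regular cardinal and build a continuous~$\subseteq${\nbd}increasing chain~${(M_\alpha)_{\alpha \leq \lambda}}$ of elementary submodels of~${\langle H(\theta), \in \rangle}$ with~${\abs{M_\alpha} = \kappa}$, containing~$G$ and~$\kappa$, each closed under~${{<}\kappa}${\nbd}sequences of its own elements, and arranged so that~${M_\lambda \supseteq V(G) \cup E(G)}$. For each~${\alpha < \lambda}$ define~$H_\alpha$ to be the subgraph of~$G$ whose edge set is~${E(G) \cap (M_{\alpha+1} \setminus M_\alpha)}$, together with all incident vertices, and set~${\mathcal{H} := \{H_\alpha \setsep \alpha < \lambda\}}$. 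Then~$\mathcal{H}$ is plainly a decomposition of~$G$, and condition~(i) is immediate from~${\abs{E(H_\alpha)} \leq \abs{M_{\alpha+1}} = \kappa}$.

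For condition~(iii), if~$C$ is a bond of some~$H_\alpha$ with~${\abs{C} < \kappa}$, the~${{<}\kappa}${\nbd}closure of~$M_{\alpha+1}$ forces~${C \in M_{\alpha+1}}$; since~${G \in M_{\alpha+1}}$ as well, elementarity transfers the assertion ``$C$ is a bond of~$G$'' from~$H(\theta)$ down to the submodel, yielding that~$C$ is indeed a bond of~$G$. For condition~(ii), given a bond~$B$ of~$G$ with~${\abs{B} \leq \kappa}$, the closure of the chain together with a cofinality argument produces an ordinal~$\alpha$ with~${B \subseteq M_{\alpha+1} \setminus M_\alpha}$: were~$B$ to split strictly across the chain, one could locate a non-empty proper subcut of~$B$ already sitting in some earlier~$M_\beta$, contradicting the minimality of~$B$. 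Elementarity then shows that~$B$ is a bond of~$H_\alpha$.

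The main technical obstacle is calibrating the closure of the~${M_\alpha}$ sharply enough to capture each bond of size at most~$\kappa$ within a single step of the chain, while still allowing bond{\nbd}witnesses (the two weakly connected sides of the cut) to move across elementary submodels in both directions. The boundary case~${\abs{B} = \kappa}$ requires a stronger~${{\leq}\kappa}${\nbd}closure property or an internally approachable chain, and singular~$\kappa$ demands that the cofinality of the chain be compatible with~${\cf(\kappa)}$. The degenerate case~${\abs{E(G)} < \kappa}$ is handled by the trivial decomposition~${\mathcal{H} = \{G\}}$.
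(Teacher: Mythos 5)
First, a point of comparison that matters here: the paper does not prove Theorem~\ref{thm:laviolette} at all. It is quoted from the literature --- proved by Laviolette under the generalised continuum hypothesis and later unconditionally by Soukup via elementary submodels --- so there is no in-paper argument to measure your proposal against. What you have written is an attempted reconstruction of Soukup's proof, and as such it contains genuine gaps rather than just omitted routine details.

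The most serious gap is in the very first step. A model~$M_\alpha$ with~${\abs{M_\alpha} = \kappa}$ that is closed under ${<}\kappa${\nbd}sequences of its own elements can only exist when~${\kappa^{<\kappa} = \kappa}$; for singular~$\kappa$ it is outright impossible, and for regular~$\kappa$ it is a GCH\nbd type hypothesis. Since the whole point of Soukup's contribution is to remove Laviolette's GCH assumption, postulating this closure re-imports exactly the hypothesis the unconditional proof is designed to avoid; the actual argument has to work with much weaker closure and a correspondingly more delicate chain. Second, your verification of condition~(ii) rests on the claim that if a bond~$B$ splits across the chain, then some~${B \cap M_\beta}$ is a ``non-empty proper subcut of~$B$'', contradicting minimality. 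But a proper subset of a bond need not be a cut of anything, so minimality of~$B$ is not contradicted this cheaply; showing that every bond of size at most~$\kappa$ is captured inside a single slice~${M_{\alpha+1} \setminus M_\alpha}$ is the heart of the theorem and is here merely asserted. Third, condition~(iii) does not follow from elementarity alone: a bond~$C$ of~$H_\alpha$ need not even be a cut of~$G$, because the two sides of~$C$ in~$H_\alpha$ could be reconnected by edges of~$G$ lying in other slices. Before any transfer between~$M_{\alpha+1}$ and~$H(\theta)$ one must prove, from the structure of the decomposition itself, that such reconnection cannot occur, and that argument is absent. Your closing paragraph correctly names these as ``technical obstacles'', but they are the entire content of the theorem rather than calibration details, so the proposal should be regarded as an outline of the known strategy rather than a proof.
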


Note that Laviolette originally only proved this theorem under the assumption of the generalised continuum hypothesis \cite{laviolette}*{Theorem~3}. 
This assumption was subsequently removed by Soukup using the technique of elementary submodels \cite{soukup-elementary-submodels}*{Theorem~6.3}. 

Moreover, note that while this theorem was originally proven for simple graphs, it holds for multigraphs as well, 
and additionally we may assume that each graph in the decomposition is connected, 
as the following corollary summarises. 

\begin{corollary}
    \label{cor:laviolette-multigraph}
    For every infinite cardinal~$\kappa$ every multigraph has a $\kappa${\nbd}bond-faithful decomposition into connected graphs. 
\end{corollary}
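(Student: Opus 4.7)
The plan is to derive this corollary from Theorem~\ref{thm:laviolette} via two successive refinements: first extending the result from simple graphs to multigraphs, and then refining each part to be connected.

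For the multigraph extension, given a multigraph $G$ let $G_s$ denote its underlying simple graph. I would apply Theorem~\ref{thm:laviolette} to $G_s$ to obtain a $\kappa$-bond-faithful decomposition $\mathcal{H}_s$, and then lift each $H_s \in \mathcal{H}_s$ to a sub-multigraph $H$ of $G$ by replacing each edge of $H_s$ with all its parallel copies in $G$. When a pair $\{u,v\}$ has multiplicity $m(uv)$ exceeding $\kappa$, a naive lifting would make $H$ too large, so I would include only $\kappa$ parallel copies of such a pair in the lifted part (enough to preserve the local connectivity between $u$ and $v$ inherited from $H_s$) and partition the remaining copies into additional two-vertex parts, each consisting of exactly $\kappa$ parallel edges between $u$ and $v$. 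The size bound of $\kappa$ on each lifted part follows from $|E(H_s)| \leq \kappa$ and the fact that each edge of $H_s$ contributes at most $\kappa$ parallel copies.

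For the connectivity refinement, I would replace each $H$ in the resulting decomposition with the collection of its weak components. The size bound is preserved since each component has at most $|E(H)| \leq \kappa$ edges, and both bond conditions transfer because every bond of a (multi)graph lies within a single weak component: the set of bonds of $H$ coincides with the disjoint union of the sets of bonds of its components.

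The main technical obstacle is verifying that the lifted decomposition is $\kappa$-bond-faithful. For condition (2), any bond $B$ of $G$ with $|B| \leq \kappa$ cannot contain all parallel copies of a pair of multiplicity greater than $\kappa$, so the corresponding set $B_s$ of underlying pairs satisfies $|B_s| \leq |B| \leq \kappa$ and is a bond of $G_s$; this bond is captured by some $H_s \in \mathcal{H}_s$ and lifts to a bond of the corresponding part of $G$. For condition (3), if $B'$ is a bond of some lifted part $H$ with $|B'| < \kappa$, then $B'$ cannot contain any retained copy of a high-multiplicity pair (as that would contribute the full $\kappa$ copies to $B'$), so $B'$ consists entirely of low-multiplicity copies; the corresponding set $B'_s$ of underlying pairs is a bond of $H_s$ of size less than $\kappa$, hence a bond of $G_s$ by $\kappa$-bond-faithfulness of $\mathcal{H}_s$, and lifts back to show $B'$ is a bond of $G$. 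The two-vertex parts introduced for high-multiplicity pairs each have a unique bond of size exactly $\kappa$, which falls outside the range required by condition (3).
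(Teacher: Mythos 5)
Your proposal is correct and follows essentially the same route as the paper: apply Theorem~\ref{thm:laviolette} to the underlying simple graph, split parts into connected components, reattach all parallel copies of each low\nobreakdash-multiplicity pair to the part containing that pair, and chunk the copies of pairs of multiplicity exceeding~$\kappa$ into size\nobreakdash-$\kappa$ two\nobreakdash-vertex parts (retaining one such chunk in the original part). The only difference is that you spell out the bond\nobreakdash-faithfulness verification that the paper dismisses as ``easy to verify'', and your check (using that a bond contains all or none of the parallel copies of a pair) is sound.
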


\begin{proof}
    For a multigraph~$G$, consider a simple graph~$G'$ obtained by iteratively deleting parallel edges and loops. 
    By Theorem~\ref{thm:laviolette}, this graph has a $\kappa${\nbd}bond-faithful decomposition~$\mathcal{H}'$. 
    For each~${H \in \mathcal{H}'}$ which is not connected, we replace it by its connected components to obtain a decomposition~$\mathcal{H}''$, which is again $\kappa${\nbd}bond-faithful as every bond is contained in a unique connected component. 
    We construct a decomposition of~$G$ as follows. 
    For any two vertices~$v$ and~$w$ such that there are at most~$\kappa$ many parallel edges between~$v$ and~$w$, we add all of those edges to the unique simple graph~${H \in \mathcal{H}''}$ containing~$vw$. 
    Otherwise, we decompose the edges between~$v$ and~$w$ into sets of size~$\kappa$, add one of those sets to the unique simple graph~${H \in \mathcal{H}''}$ containing~$vw$, and for each other of those sets add a new graph to the decomposition consisting of precisely the edges in that set. 
    Now it is easy to verify that the decomposition~$\mathcal{H}$ obtained in this manner is $\kappa${\nbd}bond-faithful. 
\end{proof}

We will use this concept to deduce Theorem~\ref{thm:infinite-cardinality-woodall}.

\begin{proof}[Proof of Theorem~\ref{thm:infinite-cardinality-woodall}]
    Let~$\kappa$ be the cardinality of a smallest dibond in~$\mathfrak{B}$.
    Let~$\mathcal{H}$ be a $\kappa${\nbd}bond-faithful decomposition of the underlying multigraph as in Corollary~\ref{cor:laviolette-multigraph}.
    
    Every dibond~$B$ in~$\mathfrak{B}$ induces a dicut of size at most~$\kappa$ in some of the members of the $\kappa${\nbd}bond-faithful decomposition. 
    This dicut cannot contain dibonds of size less than~$\kappa$ since such dibonds would be dibonds of~$D$ contained~$B$, contradicting that~$B$ is a dibond. 
    
    To each~${H \in \mathcal{H}}$ we apply Lemma~\ref{lem:regular-woodall} to the class~$\mathfrak{B}_H$ of those dibonds of~$H$ that are contained in some dicut of~$H$ that is induced by some dibond~${B \in \mathfrak{B}}$. 
    Let~${\{ F_i^H \setsep i < \kappa \}}$ denote the set of disjoint $\mathfrak{B}_H${\nbd}dijoins of~$H$. 
    It is now easy to see that~${\left\{ \bigcup \{ F_i^H \setsep H \in \mathcal{H} \} \setsep i < \kappa \right\}}$ is a set of disjoint $\mathfrak{B}${\nbd}dijoins.
\end{proof}

Finally, we show that a generalisation of this theorem to classes of dicuts fails.
With the observations from Section~\ref{sec:capacity} we also see that a capacitated version of Theorem~\ref{thm:infinite-cardinality-woodall} fails. 

\begin{example}
    \label{ex:infinite-dicuts}
    For any infinite cardinal~$\kappa$ consider the digraph~$D$ 
    consisting of~$\kappa$ many pairwise non-incident edges~$e_\alpha$ for all~${\alpha < \kappa}$, i.e.~the edge set of~$D$ is  
    \[
        E := \{ e_\alpha \setsep \alpha < \kappa \}.
    \]
    For every~${I \subseteq \kappa}$ let~${B_I := \{ e_\alpha \setsep \alpha \in I \}}$ denote the dicut consisting of the edges index by elements from~$I$. 
    Now consider the class of dicuts 
    \[
        {\mathfrak{B} := \{ B_I \setsep I \subseteq \kappa, \, \abs{I} = \kappa \}}. 
    \]
    Note that any $\mathfrak{B}${\nbd}dijoin~$F$ of~$D$ has size at least~$\kappa$ since there are~$\kappa$ many disjoint dicuts in~$\mathfrak{B}$. 
    Moreover, note that~${E \setminus F}$ has size less than~$\kappa$ since~$B_I \notin \mathfrak{B}$ for the set~$I$ for which~${B_I = E \setminus F}$. 
    Hence,~$D$ does not contain two disjoint $\mathfrak{B}${\nbd}dijoins both contained in~$E$. 
\end{example}

However, we conjecture that the generalisation holds for nested classes of dicuts, which would yield with the observations from Section~\ref{sec:capacity} the capacitated version for nested classes of infinite dicuts.

\begin{conjecture}
    Let~$D$ be a digraph and~$\mathfrak{B}$ be a nested class of infinite dicuts of~$D$. 
    Then~$D$ is $\mathfrak{B}${\nbd}woodall. 
\end{conjecture}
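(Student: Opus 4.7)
The plan is to mimic the strategy used for Theorem~\ref{thm:infinite-cardinality-woodall}, with the nested hypothesis playing the role previously played by the assumption that members of~$\mathfrak{B}$ are dibonds. Let $\kappa$ denote the minimum cardinality of a dicut in~$\mathfrak{B}$; by hypothesis $\kappa$ is infinite. I would assume without loss of generality that $D$ is weakly connected, handling each weak component separately otherwise.

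The first step would be to treat the regular case in which $|E(D)| = \kappa$, by adapting the transfinite recursion of Lemma~\ref{lem:regular-woodall}. Enumerating $\kappa \times V(D) \times V(D)$ as $\{(i_\alpha, u_\alpha, v_\alpha) \setsep \alpha < \kappa\}$, one inductively builds pairwise disjoint sets $F_i^\alpha$, adding to $F_{i_\alpha}$ undirected paths between the relevant pairs of the bookkeeping set $X^\alpha$ that are edge-disjoint from the previously used set $F^\alpha$. The key point is that, since every dicut in~$\mathfrak{B}$ has cardinality at least $\kappa$ while $|F^\alpha| < \kappa$, the required paths exist and each of them must meet every $B \in \mathfrak{B}$ separating $u_\alpha$ from $v_\alpha$. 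Crucially, the argument from Lemma~\ref{lem:balanced} goes through verbatim for infinite dicuts, so the dicut hypergraph $\mathcal{H}(D,\mathfrak{B})$ is balanced, and this balancedness should guarantee that the implicit colouring of the edges used so far is consistent and that the limit sets $F_i^\kappa$ yield $\kappa$ pairwise disjoint $\mathfrak{B}${\nbd}dijoins.

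The second step would be to reduce the general case to the regular one. Following the proof of Theorem~\ref{thm:infinite-cardinality-woodall}, apply Corollary~\ref{cor:laviolette-multigraph} to the underlying multigraph of~$D$ to obtain a $\kappa${\nbd}bond-faithful decomposition $\mathcal{H}$ into connected pieces of cardinality at most~$\kappa$. For each $H \in \mathcal{H}$ one would consider the class~$\mathfrak{B}_H$ of those dicuts of~$H$ obtained by restricting members of~$\mathfrak{B}$ to $E(H)$; nestedness of~$\mathfrak{B}$ is inherited by each $\mathfrak{B}_H$, so the regular case applies to $H$ and produces $\kappa$ disjoint $\mathfrak{B}_H${\nbd}dijoins. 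Combining these coordinate-wise across~$\mathcal{H}$ would give the desired family of $\kappa$ disjoint $\mathfrak{B}${\nbd}dijoins of~$D$.

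The main obstacle lies in making the reduction step rigorous. Unlike a dibond, an infinite dicut $B \in \mathfrak{B}$ can decompose into arbitrarily many dibonds spread across arbitrarily many pieces of~$\mathcal{H}$, so the restriction of~$B$ to a single~$H$ may be empty or of cardinality strictly less than~$\kappa$, making it unclear that $\mathfrak{B}_H$ inherits the correct minimum-size bound and, more seriously, that the combined piecewise dijoins actually meet every $B \in \mathfrak{B}$ of~$D$ (hitting one dibond of~$B$ in some~$H$ is enough for~$B$, but coordinating this uniformly across all $B \in \mathfrak{B}$ and all $i < \kappa$ is delicate). Overcoming this will likely require replacing the generic Laviolette decomposition with one specifically tailored to the nested class~$\mathfrak{B}$, perhaps built by a transfinite recursion that follows the chain-like order on the representing bipartitions of~$\mathfrak{B}$, and this is where I expect the bulk of the technical work to lie.
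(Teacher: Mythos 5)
The statement you are attempting to prove is the closing Conjecture of the paper: the authors do \emph{not} prove it, but explicitly leave it open after showing in Example~\ref{ex:infinite-dicuts} that the analogous statement for non-nested classes of infinite dicuts fails. So there is no proof in the paper to compare yours against, and your proposal does not settle the question either: it is a strategy containing an obstacle that you yourself flag as unresolved, and that obstacle is genuine.

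Concretely, three steps do not go through. First, the opening reduction to weakly connected~$D$ is already invalid for dicuts: unlike a dibond, an infinite dicut of a disconnected digraph is not confined to one weak component, its traces on the components can each be small or empty, and a $\mathfrak{B}$\nbd dijoin only needs to meet each ${B \in \mathfrak{B}}$ somewhere rather than in every component; Example~\ref{ex:infinite-dicuts} is precisely a digraph on which the componentwise approach collapses. Second, in your regular case the appeal to balancedness is a non sequitur: Lemma~\ref{lem:balanced} does show that ${\mathcal{H}(D,\mathfrak{B})}$ is balanced, but the only transversal-packing results available (Theorem~\ref{thm:berge} and Corollary~\ref{cor:berge-finite-character}) require finite hyperedges, and the recursion of Lemma~\ref{lem:regular-woodall} does not use balancedness at all --- it is stated and proved for dibonds of a weakly connected digraph, which is what guarantees that an $F^\alpha$\nbd avoiding path between separated vertices of the bookkeeping set~$X^\alpha$ exists and must meet~$B$. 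Notably, your first step never invokes nestedness, which is the hypothesis that is supposed to be doing the work. Third, as you concede, $\kappa$\nbd bond-faithfulness (Corollary~\ref{cor:laviolette-multigraph}) is a statement about bonds: an infinite dicut in~$\mathfrak{B}$ may shatter into dibonds scattered over many pieces~$H$ of the decomposition, so neither the lower bound~$\kappa$ on the size of the members of~$\mathfrak{B}_H$ nor the property that the combined sets meet every ${B \in \mathfrak{B}}$ transfers. Until these three points are repaired --- and the authors evidently did not see how --- what you have is a plausible plan, not a proof.
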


\begin{bibdiv}
\begin{biblist}

\bib{inf-menger}{article}{
   author={Aharoni, Ron},
   author={Berger, Eli},
   title={Menger's theorem for infinite graphs},
   journal={Invent. Math.},
   volume={176},
   date={2009},
   number={1},
   pages={1--62},
   issn={0020-9910},
   review={\MR{2485879}},
   doi={10.1007/s00222-008-0157-3},
}

\bib{bang-jensen}{book}{
   author={Bang-Jensen, J\o rgen},
   author={Gutin, Gregory},
   title={Digraphs},
   series={Springer Monographs in Mathematics},
   edition={2},
   note={Theory, algorithms and applications},
   publisher={Springer-Verlag London, Ltd., London},
   date={2009},
   pages={xxii+795},
   isbn={978-1-84800-997-4},
   review={\MR{2472389}},
   doi={10.1007/978-1-84800-998-1},
}

\bib{berge}{book}{
   author={Berge, Claude},
   title={Hypergraphs},
   series={North-Holland Mathematical Library},
   volume={45},
   note={Combinatorics of finite sets;
   Translated from the French},
   publisher={North-Holland Publishing Co., Amsterdam},
   date={1989},
   pages={x+255},
   isbn={0-444-87489-5},
   review={\MR{1013569}},
}

\bib{EG:capacity}{article}{
   author={Edmonds, Jack},
   author={Giles, Rick},
   title={A min-max relation for submodular functions on graphs},
   conference={
      title={Studies in integer programming},
      address={Proc. Workshop, Bonn},
      date={1975},
   },
   book={
      publisher={North-Holland, Amsterdam},
   },
   date={1977},
   pages={185--204. Ann. of Discrete Math., Vol. 1},
   review={\MR{0460169}},
}

\bib{CEKSS:dijoins_recent}{article}{
   author={Chudnovsky, Maria},
   author={Edwards, Katherine},
   author={Kim, Ringi},
   author={Scott, Alex},
   author={Seymour, Paul},
   title={Disjoint dijoins},
   journal={J. Combin. Theory Ser. B},
   volume={120},
   date={2016},
   pages={18--35},
   issn={0095-8956},
   review={\MR{3504077}},
   doi={10.1016/j.jctb.2016.04.002},
}

\bib{Cornuejols:book}{book}{
   author={Cornu\'{e}jols, G\'{e}rard},
   title={Combinatorial optimization: Packing and covering},
   series={CBMS-NSF Regional Conference Series in Applied Mathematics},
   volume={74},
   note={},
   publisher={Society for Industrial and Applied Mathematics (SIAM),
   Philadelphia, PA},
   date={2001},
   pages={xii+132},
   isbn={0-89871-481-8},
   review={\MR{1828452}},
   doi={10.1137/1.9780898717105},
}

\bib{CG:more-counterexamples}{article}{
   author={Cornu\'{e}jols, G\'{e}rard},
   author={Guenin, Bertrand},
   title={On dijoins},
   journal={Discrete Math.},
   volume={243},
   date={2002},
   number={1-3},
   pages={213--216},
   issn={0012-365X},
   review={\MR{1874739}},
   doi={10.1016/S0012-365X(01)00209-6},
}

\bib{diestel}{book}{
   author={Diestel, Reinhard},
   title={Graph theory},
   series={Graduate Texts in Mathematics},
   volume={173},
   edition={5},
   publisher={Springer, Berlin},
   date={2018},
   pages={xviii+428},
   isbn={978-3-662-57560-4},
   isbn={978-3-662-53621-6},
   review={\MR{3822066}},
}

\bib{FY:source-sink}{article}{
   author={Feofiloff, P.},
   author={Younger, D. H.},
   title={Directed cut transversal packing for source-sink connected graphs},
   journal={Combinatorica},
   volume={7},
   date={1987},
   number={3},
   pages={255--263},
   issn={0209-9683},
   review={\MR{918396}},
   doi={10.1007/BF02579302},
}

\bib{inf-LY:1}{article}{
   author={Gollin, J. Pascal},
   author={Heuer, Karl},
   title={On the infinite Lucchesi-Younger Conjecture I},
   journal={J. Graph Theory},
   % Journal of Graph Theory
   volume={98},
   %number={},
   date={2021},
   pages={27--48},
   issn={0364-9024},
   %review={\MR{}},
   doi={10.1002/jgt.22680},
   %eprint={1909.08373},
   %note={},}
}

\bib{inf-LY:2}{article}{
   author={Gollin, J.~Pascal},
   author={Heuer, Karl},
   title={On the infinite Lucchesi-Younger Conjecture II},
   %date={2020+},
   %eprint={},
   note={In preparation},
}

\bib{jech}{book}{
   author={Jech, Thomas},
   title={Set theory},
   series={Springer Monographs in Mathematics},
   note={The third millennium edition, revised and expanded},
   publisher={Springer-Verlag, Berlin},
   date={2003},
   pages={xiv+769},
   isbn={3-540-44085-2},
   review={\MR{1940513}},
}

\bib{laviolette}{article}{
   author={Laviolette, Fran\c{c}ois},
   title={Decompositions of infinite graphs. I. Bond-faithful
   decompositions},
   journal={J. Combin. Theory Ser. B},
   volume={94},
   date={2005},
   number={2},
   pages={259--277},
   issn={0095-8956},
   review={\MR{2145516}},
   doi={10.1016/j.jctb.2005.01.003},
}

\bib{LW:series-parallel}{article}{
   author={Lee, Orlando},
   author={Wakabayashi, Yoshiko},
   title={Note on a min-max conjecture of Woodall},
   journal={J. Graph Theory},
   volume={38},
   date={2001},
   number={1},
   pages={36--41},
   issn={0364-9024},
   review={\MR{1849557}},
   doi={10.1002/jgt.1022},
}

\bib{LW:no_K^5-e}{article}{
   author={Lee, Orlando},
   author={Williams, Aaron},
   title={Packing dicycle covers in planar graphs with no $K^5 - e$ minor},
   conference={
      title={LATIN 2006: Theoretical informatics},
   },
   book={
      series={Lecture Notes in Comput. Sci.},
      volume={3887},
      publisher={Springer, Berlin},
   },
   date={2006},
   pages={677--688},
   review={\MR{2256372}},
   doi={10.1007/11682462\_62},
}

\bib{lucc-young_paper}{article}{
   author={Lucchesi, C. L.},
   author={Younger, D. H.},
   title={A minimax theorem for directed graphs},
   journal={J. London Math. Soc. (2)},
   volume={17},
   date={1978},
   number={3},
   pages={369--374},
   issn={0024-6107},
   review={\MR{500618}},
   doi={10.1112/jlms/s2-17.3.369},
}

\bib{meszaros:partition-connected}{article}{
   author={M\'{e}sz\'{a}ros, Andr\'{a}s},
   title={Note: a note on disjoint dijoins},
   journal={Combinatorica},
   volume={38},
   date={2018},
   number={6},
   pages={1485--1488},
   issn={0209-9683},
   review={\MR{3910884}},
   doi={10.1007/s00493-018-3862-6},
}

\bib{strongly-connected-orientation}{article}{
   author={Robbins, H. E.},
   title={Questions, Discussions, and Notes: A Theorem on Graphs, with an
   Application to a Problem of Traffic Control},
   journal={Amer. Math. Monthly},
   volume={46},
   date={1939},
   number={5},
   pages={281--283},
   issn={0002-9890},
   review={\MR{1524589}},
   doi={10.2307/2303897},
}

\bib{Schrijver:counterexample}{article}{
   author={Schrijver, A.},
   title={A counterexample to a conjecture of Edmonds and Giles},
   journal={Discrete Math.},
   volume={32},
   date={1980},
   number={2},
   pages={213--215},
   issn={0012-365X},
   review={\MR{592858}},
   doi={10.1016/0012-365X(80)90057-6},
}

\bib{Schrijver:source-sink}{article}{
   author={Schrijver, A.},
   title={Min-max relations for directed graphs},
   conference={
      title={Bonn Workshop on Combinatorial Optimization},
      address={Bonn},
      date={1980},
   },
   book={
      series={Ann. Discrete Math.},
      volume={16},
      publisher={North-Holland, Amsterdam-New York},
   },
   date={1982},
   pages={261--280},
   review={\MR{686312}},
}

\bib{shepherd_vetta}{article}{
   author={Shepherd, F. B.},
   author={Vetta, A.},
   title={Visualizing, finding and packing dijoins},
   conference={
      title={Graph theory and combinatorial optimization},
   },
   book={
      series={GERAD 25th Anniv. Ser.},
      volume={8},
      publisher={Springer, New York},
   },
   date={2005},
   pages={219--254},
   review={\MR{2180135}},
   doi={10.1007/0-387-25592-3\_8},
}

\bib{soukup-elementary-submodels}{article}{
   author={Soukup, Lajos},
   title={Elementary submodels in infinite combinatorics},
   journal={Discrete Math.},
   volume={311},
   date={2011},
   number={15},
   pages={1585--1598},
   issn={0012-365X},
   review={\MR{2800978}},
   doi={10.1016/j.disc.2011.01.025},
}

\bib{thomassen:perso}{misc}{
   author={Thomassen, Carsten},
   note={Personal communication},
}

\bib{WG:even-more-counterexamples}{article}{
   author={Williams, Aaron Michael},
   author={Guenin, Bertrand},
   title={Advances in packing directed joins},
   conference={
      title={Proceedings of GRACO2005},
   },
   book={
      series={Electron. Notes Discrete Math.},
      volume={19},
      publisher={Elsevier Sci. B. V., Amsterdam},
   },
   date={2005},
   pages={249--255},
   review={\MR{2173794}},
   doi={10.1016/j.endm.2005.05.034},
}

\bib{woodall}{article}{
   author={Woodall, D. R.},
   title={Menger and K\"{o}nig systems},
   conference={
      title={Theory and applications of graphs},
      address={Proc. Internat. Conf., Western Mich. Univ., Kalamazoo, Mich.},
      date={1976},
   },
   book={
      series={Lecture Notes in Math.},
      volume={642},
      publisher={Springer, Berlin},
   },
   date={1978},
   pages={620--635},
   review={\MR{499529}},
}

\bib{egres-forum}{misc}{
   author={},
   note={\url{http://lemon.cs.elte.hu/egres/open/Woodall's_conjecture}},
}

\bib{open-problem-garden}{misc}{
   author={},
   note={\url{http://www.openproblemgarden.org/op/woodalls_conjecture}},
}

\end{biblist}
\end{bibdiv}

\end{document}